\newcommand{\bdot}{\boldsymbol{\cdot}}
\theoremstyle{plain}
\newtheorem{theorem}{\bf Theorem}[section]
\newtheorem{proposition}[theorem]{\bf Proposition}
\newtheorem{lemma}[theorem]{\bf Lemma}
\newtheorem{corollary}[theorem]{\bf Corollary}
\theoremstyle{definition}
\newtheorem{example}[theorem]{\bf Example}
\newtheorem{remark}[theorem]{\bf Remark}
\newcommand{\N}{\mathbb N}
\newcommand{\Z}{\mathbb Z}
\newcommand{\R}{\mathbb R}
\newcommand{\BF}{\text{\rm BF}}
\newcommand{\FF}{\text{\rm FF}}
\newcommand{\AAP}{\text{\rm AAP}}
\newcommand{\AAMP}{\text{\rm AAMP}}
 \DeclareMathOperator{\ord}{ord}
\DeclareMathOperator{\lcm}{lcm} 
\DeclareMathOperator{\spec}{spec} \DeclareMathOperator{\supp}{supp}
\renewcommand{\t}{\, | \,}
\numberwithin{equation}{section}
\newcommand{\be}{\begin{equation}}
\newcommand{\ee}{\end{equation}}
\numberwithin{equation}{section}
\thanks{}
\subjclass[2010]{20M12, 20M13, 11B30, 13A50}
\begin{document}
\title{On product-one sequences over subsets of groups}

\author{Victor Fadinger and Qinghai Zhong}

\address{School of Mathematics and statistics, Shandong University of Technology, Zibo, Shandong 255000, China.}
\email{qinghai.zhong@uni-graz.at}
\urladdr{https://imsc.uni-graz.at/zhong/}

\address{University of Graz, NAWI Graz \\
Institute for Mathematics and Scientific Computing \\
Heinrichstra{\ss}e 36\\
8010 Graz, Austria}
\email{victor.fadinger@uni-graz.at,  qinghai.zhong@uni-graz.at}

\thanks{This work was supported by the Austrian Science Fund FWF (Projects W1230 and P33499-N) and by National Natural Science Foundation of China (Grant No. 12001331).}

\keywords{product-one sequences, Davenport constant, $v$-noetherian monoids, sets of lengths, dihedral groups}

\begin{abstract}
Let $G$ be a group and $G_0 \subseteq G$ be a subset. A sequence over $G_0$ means a finite sequence of terms from $G_0$, where the order of elements is disregarded and the repetition of elements is allowed. A product-one sequence is a sequence whose elements can be ordered such that their product equals the identity element of the group. We study algebraic and arithmetic properties of monoids of product-one sequences over finite subsets of $G$ and over the whole group $G$, with a special emphasis on the infinite dihedral group.
\end{abstract}

\maketitle

\section{Introduction} \label{1}

Let $G$ be a group and $G_0 \subseteq G$ be a subset. Elements of the free abelian monoid over $G_0$ are called sequences over $G_0$. Thus, in combinatorial language, a sequence over $G_0$ means a finite sequence of terms from $G_0$, where the order of elements is disregarded and the repetition of elements is allowed, and the concatenation of sequences corresponds to the multiplication in the free abelian monoid.   A product-one sequence is a sequence whose elements can be ordered such that their product equals the identity element of the group. Thus, the set $\mathcal B (G_0)$ of product-one sequences over $G_0$ is a submonoid of the free abelian monoid over $G_0$.

The study of sequences, in particular the study of their structure under extremal properties, is a classical  topic in additive combinatorics. The monoid of product-one sequences arises naturally in various subfields of algebra, from abstract semigroup theory, to factorization theory, and to invariant theory. For a long time the focus of interest was in the abelian setting, but the last decade has seen a growing interest for the non-abelian setting. There is work on combinatorial invariants (including small and large Davenport constants and Erd{\H{o}}s-Ginzburg-Ziv constants \cite{G-G-O-Z21a, Oh-Zh20a, Oh-Zh20b, Br-Ri18a,Zha21a, Gr13b, Ha15a, Ha-Zh19a}), on problems stemming from invariant theory (including work on Noether numbers \cite{Cz-Do-Sz18,Cz-Do14a,Cz-Do15a, Cz-Do-Ge16a}), and on algebraic properties of the monoid $\mathcal B (G_0)$ (\cite{Oh20a, Oh19b, Do20a}).

All work in the non-commutative setting was restricted so far to finite groups. In the present paper we study sequences over  finite subsets of arbitrary groups (note that there are infinite torsion groups having finite sets of generators). In Section \ref{3}, we provide an in-depth study of algebraic properties of the monoid $\mathcal B (G_0)$ for subsets of arbitrary groups. Under certain conditions, we characterize when $\mathcal B (G_0)$ is Krull, root closed, finitely generated, or a C-monoid (Theorem \ref{3.11}). For the whole group $G$, we establish characterizations for being weakly Krull, or seminormal, and more (Theorem \ref{3.14}). In Section \ref{4}, we investigate arithmetical invariants of monoids of product-one sequences, where our focus is on sets of lengths. Based on these results for general groups, Section \ref{5} offers explicit algebraic characterizations for the finiteness of key arithmetical invariants, including the catenary degree, the $k$th elasticities, local tame degrees, and more  in the special case of finite subsets of infinite dihedral groups (Theorem \ref{5.1}) and for the whole group (Theorem \ref{5.2}).

\section{Preliminaries} \label{2}

We denote by $\N$ the set of positive integers and we put $\N_0 = \N\cup\{0\}$. For real numbers $a, b \in \R$, we denote by $[a, b] = \{ x \in \Z \colon a \le x \le b\}$ the discrete interval between $a$ and $b$.

\smallskip
\noindent
\textbf{Monoids.} A \textit{monoid} means a commutative, unitary and cancellative semigroup. Our notation and terminology for ideal theory and the arithmetic of monoids are  consistent with  \cite{HK98, Ge-HK06a}. We briefly gather some key notions.
Let $H$ be a monoid and $\mathsf q(H)$ its quotient group.
 There are three common closure operations for a monoid $H$, namely
\begin{itemize}
\item $H'=\{x\in\mathsf q(H)\colon \text{there is $n\in\N$ such that $x^m\in H$ for all $m\geq n$} \}$ is called the \textit{seminormalization} of $H$,
\item $\widetilde{H}=\{x\in\mathsf q(H)\colon \text{there is $n\in\N$ such that $x^n\in H$}\}$ is called the \textit{root closure} of $H$ and
\item $\widehat{H}=\{x\in\mathsf q(H)\colon \text{there is $c\in H$ such that $cx^n\in H$ for all $n\in\N$}\}$ is called the \textit{complete integral closure} of $H$.
\end{itemize}
A monoid $H$ is called \textit{seminormal} (resp. \textit{root closed}, resp. \textit{completely integrally closed}) if $H=H'$ (resp. $\widetilde{H}$, resp. $\widehat{H}$).\\
Note the following two facts:
\begin{enumerate}
\item $H\subseteq H'\subseteq \widetilde H\subseteq \widehat H$ (the first two inclusions being clear, the third follows from \cite[Proposition 2.7.11]{Ge-HK06a}).
\item $H$ is seminormal if and only if for every $x\in \mathsf{q}(H)$ we have that $x^2,x^3\in H$ implies $x\in H$ (e.g. see \cite[Lemma 2.4]{Re13a}).
\end{enumerate}

%Let $D$ be a monoid such that $H \subset D$ is a submonoid. We say that $H \subset D$ is
%\begin{itemize}
%	\item {\it saturated} \ if $H = \mathsf q (H) \cap D$,
%	\item {\it divisor-closed} \ if $a \in H$, $\alpha \in D$, and $\alpha \t a$ imply that $\alpha \in H$,
%	\item {\it cofinal} \ if for every $\alpha \in D$ there is an $a \in H$ such that $\alpha \t a$, \ and
%\end{itemize}
%the factor group $\mathsf q (D)/\mathsf q (H)$ is called the {\it class group} of $H \subset D$.
%
%Let $\mathfrak X (H)$ denote the set of all minimal non-empty prime $s$-ideals of $H$, and for subsets $A, B \subset \mathsf q (H)$, we set
%$(A \DP B) = \{ x \in \mathsf q (H) \colon x B \subset A \}$.  We denote by  $\mathcal I_v^* (H)$  the monoid of $v$-invertible $v$-ideals (with $v$-multiplication) and by $\mathcal F_v (H)^{\times} = \mathsf q \big( \mathcal I_v^* (H) \big)$  its quotient group of fractional $v$-invertible $v$-ideals. The monoid of principal ideals $\mathcal H = \{aH \mid a \in H \}$ is a cofinal saturated submonoid of $\mathcal I_v^* (H)$, and the class group of $\mathcal H  \subset \mathcal I_v^* (H)$ is the $v$-class group $\mathcal C_v (H) = \mathcal F_v (H)^{\times}/ \mathsf q (\mathcal H)$ of $H$.

A monoid homomorphism $\varphi: H\to D$ is 
\begin{itemize}
	\item a \textit{divisor homomorphism} if for all $a,b\in H$ we have that $\varphi(a)\mid_D \varphi(b)$ implies $a\mid_H b$;
	\item  \textit{cofinal} if for all $a\in D$, there exists $b\in H$ such that $a\mid_D \varphi(b)$.
\end{itemize}		
	 A submonoid $H\subseteq D$ is called \textit{saturated} if the inclusion $H\hookrightarrow D$ is a divisor homomorphism (equivalently, $\mathsf q(H)\cap D=H$) and it is called \textit{divisor closed} if for all $d\in D$ and $h\in H$ we have that $d \mid_D h$ implies $d\in H$. A monoid $H$ is said to have a \textit{divisor theory} if there exists a divisor homomorphism $\varphi:H\to \mathcal F(P)$ into a free abelian monoid over some set $P$ such that, for every $p\in P$, there is a finite nonempty subset $X\subseteq H$ with $p=\gcd(\varphi(X))$.

Let $X\subseteq \mathsf q(H)$. We set $(H\colon X)=\{y\in \mathsf q(H)\colon yX\subseteq H\}$ and $X_v=(H\colon (H\colon X))$.
 We say 
$X$ is
\begin{itemize}		
	\item an $s$-ideal if $X\subseteq H$ and $XH=X$;

	\item a $v$-ideal if $X\subseteq H$ and $X=X_v$.

	\item a $t$-ideal if $X\subseteq H$ and $$X=\bigcup_{E\subset X \text{  nonempty and finite}} E_v\,.$$
\end{itemize} 
We denote by $s$-$\operatorname{spec}(H)$ the set of all nonempty prime $s$-ideals of $H$ and by $\mathfrak X(H)$ the set of all nonempty minimal prime $s$-ideals of $H$. 

A monoid is \textit{$v$-noetherian} if it satisfies the ascending chain condition for $v$-ideals
and 
a monoid is a \textit{Krull monoid} if it is completely integrally closed and $v$-noetherian (equivalently, if it has a divisor theory).  A list of further equivalent conditions can be found in \cite[Theorems 2.3.11 and  2.4.8]{Ge-HK06a}. If $H$ is a Krull monoid and $F=\mathcal F(P)$ is a free abelian monoid such that $H_{\text{red}}\subseteq F$ and the inclusion $H_{\text{red}}\hookrightarrow F$ is a divisor theory (such an $F$ always exists), then $F$ is called a \textit{monoid of divisors} and $P$ a \textit{set of prime divisors} for $H$. In the given case, we call
$\mathcal C(H)=F/H_{\text{red}}$
the \textit{divisor class group} (or just \textit{class group}) of $H$.
Let $\mathfrak p$ be a prime $s$-ideal. We denote by $H_{\mathfrak p}=\{s_1/s_2\in \mathsf q(H)\colon s_1\in H \text{ and }s_2\in H\setminus \mathfrak p\}$ the localization of $H$ at $\mathfrak p$.
A monoid $H$ is called a {\it weakly Krull monoid} (\cite[Corollary 22.5]{HK98}) if
\[
H = \bigcap_{{\mathfrak p} \in \mathfrak X (H)} H_{\mathfrak p}  \quad \text{and} \quad \{{\mathfrak p} \in \mathfrak X (H) \colon a \in {\mathfrak p}\} \quad \text{is finite for all} \ a \in H \,.
\]

Let $D$ be a monoid and $H\subseteq D$ a submonoid. Two elements $y,y'\in D$ are called \textit{$H$-equivalent} if $y^{-1}H\cap D=y'^{-1}H\cap D$ (equivalently: If $x\in D$, then $yx\in H$ if and only if $y'x\in H$). This is a congruence relation on $D$ and for $y\in D$ we denote its congruence class by $[y]_H^D$. We set
\begin{equation*}
\mathcal C(H,D)=\{[y]_H^D\colon y\in D\} \ \ \ \text{ and } \ \ \ \mathcal C^*(H,D)=\{[y]_H^D\colon y\in D\setminus D^{\times}\cup \{1_D\}\},
\end{equation*}
the first being called the \textit{class semigroup} of $H\subseteq D$ and the latter the \textit{reduced class semigroup} of $H\subseteq D$.
A monoid $H$ is called a \textit{C-monoid} if it is a submonoid of a factorial monoid $F$ such that $H^{\times}=H\cap F^{\times}$ and the reduced class semigroup $\mathcal C^*(H,F)$ is finite. To emphasize the fact that $H$ is a C-monoid as a submonoid of the factorial monoid $F$, we also say that $H$ is a C-monoid defined in $F$. Every Krull monoid $H$ with finite divisor class group is a C-monoid in $H^{\times}\times F$, where $F$ is a monoid of divisors for $H$. We refer to  (\cite[Chapter 2.8 and 2.9]{Ge-HK06a}, \cite{Re13a, Ge-Ra-Re15c})  for background on C-monoids and for further examples.

\smallskip
\noindent
\textbf{Monoids of product-one sequences.} Let $G$ be a multiplicatively written group and $G_0\subseteq G$ be a subset. The submonoid generated by $G_0$ will be denoted by $[G_0]$, while the notation for the subgroup generated by $G_0$ is $\langle G_0\rangle$. If $g,h\in G$, then the commutator of $g$ and $h$ is the element $[g,h]=g^{-1}h^{-1}gh\in G$. The commutator subgroup of $G$ is $G'=\langle [g,h]\colon g,h\in G\rangle$.

Elements of the free abelian monoid over $G_0$ will be called {\it sequences} (over $G_0$). Our notation on sequences  is consistent with \cite{Gr13b, Cz-Do-Ge16a}. We recall and fix notation for the key objects needed in the sequel.
To begin with, let $\mathcal F(G_0)$ denote  the free abelian monoid over  $G_0$. Then every element $S\in\mathcal F(G_0)$ has a unique  representation of the form
\begin{equation*}
S=\prod_{i=1}^n g_i^{[m_i]},
\end{equation*}
where $n \in \N_0$, $m_1,\hdots, m_n\in \N$, and $g_1,\hdots,g_n\in G_0$.  In order to distinguish the group multiplication from the operation on $\mathcal F(G_0)$, we denote the first by $"\cdot "$ (and avoid it when possible) and the latter by $"\bdot "$. For the same reason, if $g\in G_0$, the sequence $g\bdot\hdots\bdot g$ of length $n$ is denoted $g^{[n]}$, whence $g^{[n]}\in \mathcal F(G_0)$ and $g^n\in G$. Let $S=g_1\bdot\hdots\bdot g_{\ell} \in\mathcal F(G_0)$ be a sequence. Then $|S|= \ell \in \N_0$ is the {\it length} of $S$, $\supp (S) = \{g_1, \ldots, g_{\ell} \}$ is the {\it support} of $S$, and
\begin{equation*}
\pi(S)=\{g_{\sigma(1)}\cdot \ldots\cdot g_{\sigma(n)}\in G \colon \sigma\in \mathfrak S_n \text{ is a permutation of }  [1,n]\}
\end{equation*}
is the \textit{product set} of $S$. If $T,S\in\mathcal F(G_0)$, then $T$ \textit{divides} $S$, denoted by $T\mid S$, provided there exists $U\in\mathcal F(G_0)$ such that $TU=S$. In this case $T$ is called a \textit{subsequence} of $S$.\\
We say that  $S$ is
\begin{itemize}
\item a \textit{product-one sequence} if $1_G\in \pi(S)$,

\item{\it product-one free} if $1_G \notin \pi (T)$ for any $1 \ne T \in \mathcal F (G_0)$ with $T \mid S$.
\end{itemize}
The set
\begin{equation*}
\mathcal B(G_0)=\{S\in \mathcal F(G_0) \colon 1_G\in \pi(S)\} \subseteq \mathcal F (G_0)
\end{equation*}
is a reduced submonoid of $\mathcal F (G_0)$, called  the \textit{monoid of product-one sequences} over $G_0$.  We denote  by $\mathcal A(G_0)$ its set of irreducible elements and by
\[
\mathsf D (G_0) = \sup \{ |S| \colon S \in \mathcal A (G_0) \} \in \N \cup \{\infty\}
\]
the {\it (large) Davenport constant} of $G_0$.

We call $G_0$ \textit{condensed} if every element of $G_0$ is in the support of some sequence in $\mathcal B(G_0)$ (equivalently, if $\mathcal B(G_0)\subseteq\mathcal F(G_0)$ is cofinal).  If $G_0\subseteq G$, then  $G_1=\bigcup_{S\in\mathcal B(G_0)}\supp(S) \subseteq G_0$ is the maximal condensed subset of $G_0$ and $\mathcal B(G_1)=\mathcal B(G_0)$. If $G_0$ is condensed, then $[G_0] = \langle G_0\rangle$. It is obvious, that $[G_0]\subseteq \langle G_0\rangle$. For the other inclusion let $g\in G_0$. Then by condensedness there exists $S\in\mathcal B(G_0)$ such that $g\in\supp(S)$, say $S=g_1\bdot\ldots\bdot g_{|S|}$ with $1=g_1 \ldots g_{|S|}$ and $g=g_r$ for some $r\in [1,|S|]$. Then, since elements that are inverse to each other commute, we have that $1=g_{r+1}\ldots g_{|S|}g_1\ldots g_r$. Therefore $g^{-1}\in\pi(S\bdot g^{[-1]})$, so $g^{-1}\in[G_0]$ and $\langle G_0\rangle \subseteq [G_0]$ follows.  We will use this fact and the argument of its proof throughout the manuscript without further reference. For simplicity of notation, many statements will be formulated for condensed subsets, which entails no restriction in generality.

%%%%%%_______________section_3_______________________%%%%%%%%%%%%%%%%%

\section{Algebraic properties of $\mathcal B(G)$ and $\mathcal B(G_0)$} \label{3}

This section contains our main algebraic results (Propositions \ref{3.3}, \ref{3.7}, and Theorems \ref{3.11} and \ref{3.14}). Parts of these results were known before for abelian groups and for finite groups.
We start with some elementary remarks on monoids of product-one sequences. For the following lemma, we denote the set $\{g\in G\colon \text{there is $S\in\mathcal{B}(G_0)$ such that $g\in\pi (S)$}\}$ by $\pi(\mathcal B(G_0))$.

\smallskip
\begin{lemma} \label{3.1}
Let $G$ be a group and let $G_0\subseteq G$ be a condensed subset such that $\langle G_0\rangle=G$. Then $\pi(\mathcal B(G_0))=G'=\pi(\mathcal B(G))$.  Moreover,
$$\{S\in \mathcal F(G)\colon |S|=1, S\in \mathsf q(\mathcal B(G_0))\}=\{S\in \mathcal F(G_0)\colon |S|=1, \supp(S)\subseteq G'\}\,.$$
\end{lemma}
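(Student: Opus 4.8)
The plan is to organize both assertions around the abelianization homomorphism. Let $\sigma\colon \mathcal F(G_0)\to G/G'$ be the monoid homomorphism sending a sequence $S=h_1\bdot\ldots\bdot h_n$ to the residue class of $h_1\cdots h_n$; since $G/G'$ is abelian this is well defined (independent of the order of the terms), and it extends to a group homomorphism $\bar\sigma\colon \mathsf q(\mathcal F(G_0))\to G/G'$. If $S\in\mathcal B(G_0)$, then $1_G\in\pi(S)$ forces $\sigma(S)=1$, so $\mathcal B(G_0)\subseteq\Ker\sigma$ and hence $\mathsf q(\mathcal B(G_0))\subseteq\Ker\bar\sigma$. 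This at once yields the ``easy'' inclusions: if $g\in\pi(S)$ for some $S\in\mathcal B(G_0)$, then $\overline g=\sigma(S)=1$, i.e. $g\in G'$, which gives $\pi(\mathcal B(G_0))\subseteq G'$ (and, taking $G_0=G$, also $\pi(\mathcal B(G))\subseteq G'$).

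For the reverse inclusion $G'\subseteq\pi(\mathcal B(G_0))$ I would use that, $G_0$ being condensed with $\langle G_0\rangle=G$, we have $[G_0]=G$, so every element of $G$ is a product of terms from $G_0$. Given $w\in G'$, write $w=[a_1,b_1]\cdots[a_k,b_k]$ and, for each $i$, fix $G_0$-spellings of the four elements $a_i^{-1},b_i^{-1},a_i,b_i\in G=[G_0]$. Let $S\in\mathcal F(G_0)$ be the concatenation of all these spellings. Reordering the blocks as $a_i$ followed by $a_i^{-1}$ followed by $b_i$ followed by $b_i^{-1}$ (for $i=1,\ldots,k$) produces $1_G$, so $S\in\mathcal B(G_0)$; reordering them instead as $a_i^{-1},b_i^{-1},a_i,b_i$ produces $[a_1,b_1]\cdots[a_k,b_k]=w$, so $w\in\pi(S)$. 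Hence $G'\subseteq\pi(\mathcal B(G_0))$. Since $G_0\subseteq G$ gives $\mathcal B(G_0)\subseteq\mathcal B(G)$ and therefore $\pi(\mathcal B(G_0))\subseteq\pi(\mathcal B(G))$, the chain $G'\subseteq\pi(\mathcal B(G_0))\subseteq\pi(\mathcal B(G))\subseteq G'$ collapses and proves the first assertion.

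For the ``Moreover'' statement I would establish the stronger identity $\mathsf q(\mathcal B(G_0))=\Ker\bar\sigma$; the inclusion $\subseteq$ is already in hand. For $\supseteq$, take $X\in\Ker\bar\sigma$ and write $X=A\bdot B^{[-1]}$ with $A,B\in\mathcal F(G_0)$; then $\sigma(A)=\sigma(B)$, i.e. $ab^{-1}\in G'$, where $a,b\in G$ denote the products of the terms of $A,B$ in some fixed orders. The key step is to find a single $C\in\mathcal F(G_0)$ with $A\bdot C,\,B\bdot C\in\mathcal B(G_0)$, for then $X=(A\bdot C)\bdot(B\bdot C)^{[-1]}\in\mathsf q(\mathcal B(G_0))$. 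I would build $C=C_1\bdot C_2$, where $C_1\in\mathcal F(G_0)$ has product $a^{-1}$ (possible since $a^{-1}\in G=[G_0]$) and $C_2\in\mathcal B(G_0)$ satisfies $ab^{-1}\in\pi(C_2)$ (possible by the first assertion, as $ab^{-1}\in G'=\pi(\mathcal B(G_0))$). Then $a^{-1}\in\pi(C)$ and $a^{-1}\cdot ab^{-1}=b^{-1}\in\pi(C)$, so ordering $A$ to $a$ and $C$ to $a^{-1}$ gives $A\bdot C\in\mathcal B(G_0)$, while ordering $B$ to $b$ and $C$ to $b^{-1}$ gives $B\bdot C\in\mathcal B(G_0)$. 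This proves $\mathsf q(\mathcal B(G_0))=\Ker\bar\sigma$. Finally, a length-one sequence $g^{[1]}\in\mathcal F(G)$ lies in $\mathsf q(\mathcal B(G_0))\subseteq\mathsf q(\mathcal F(G_0))$ only if $g\in G_0$ (the elements $\{h^{[1]}\colon h\in G\}$ form a basis of $\mathsf q(\mathcal F(G))$, and the coordinate subgroup $\mathsf q(\mathcal F(G_0))$ meets this basis exactly in $G_0$), and then $\bar\sigma(g^{[1]})=\overline g=1$ means precisely $g\in G'$; conversely $g\in G_0\cap G'$ gives $g^{[1]}\in\Ker\bar\sigma=\mathsf q(\mathcal B(G_0))$. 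This is exactly the claimed set equality.

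The main obstacle is the reverse inclusion in each part, where one must produce \emph{concrete} product-one sequences over the possibly ``small'' set $G_0$ rather than over all of $G$. Both constructions rest on the same mechanism: condensedness together with $\langle G_0\rangle=G$ forces $[G_0]=G$, so every group element and its inverse can be spelled out over $G_0$, and the commutator and product-one bookkeeping can then be carried out entirely inside $\mathcal F(G_0)$. The delicate point in the ``Moreover'' part is that one cannot complete $A$ and $B$ separately; the \emph{single} sequence $C$ must admit two different orderings, to $a^{-1}$ and to $b^{-1}$, and this is precisely what the product-one factor $C_2$ realizing $ab^{-1}\in G'$ supplies.
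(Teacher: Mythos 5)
Your proof is correct. For the first assertion it is essentially the paper's argument: the inclusion $\pi(\mathcal B(G_0))\subseteq G'$ is the standard abelianization observation (the paper simply quotes that $\pi(S)\subseteq G'$ for every product-one sequence $S$, while you make the homomorphism $\sigma$ explicit), and the reverse inclusion uses the same spelling-and-reordering trick over $G_0$ made possible by $[G_0]=G$ --- the paper spells a single commutator and uses that $\pi(\mathcal B(G_0))$ is closed under products, while you concatenate the spellings of all $k$ commutators at once; this difference is immaterial. For the ``Moreover'' assertion, however, your route is genuinely different. The paper works directly with length-one sequences: if $g=S_1/S_2$ with $S_1,S_2\in\mathcal B(G_0)$, then $g\bdot S_2=S_1$ gives $g\in\pi(S_1)\subseteq G'$ and $\supp(g)\subseteq G_0$; conversely, for $g\in G_0\cap G'$ it picks $T\in\mathcal B(G_0)$ with $g^{-1}\in\pi(T)$ and writes $g=(g\bdot T)/T$ --- two lines each. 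You instead prove the stronger identity $\mathsf q(\mathcal B(G_0))=\Ker\overline{\sigma}$, a description of the entire quotient group, via the single completion sequence $C=C_1\bdot C_2$ that can be ordered both to $a^{-1}$ and to $b^{-1}$ (your remark that $A$ and $B$ cannot be completed separately, and that the factor $C_2$ realizing $ab^{-1}\in G'$ is exactly what makes one common $C$ possible, identifies the right subtlety), and then you specialize to length-one sequences. Since $\mathcal B(G_0)^{\ast}=\Ker\sigma$ and $\Ker\overline{\sigma}=\mathsf q(\mathcal B(G_0)^{\ast})$, your identity amounts to the statement $\mathsf q(\mathcal B(G_0))=\mathsf q(\mathcal B(G_0)^{\ast})$ of Proposition \ref{3.3}.2, which the paper deduces later \emph{from} Lemma \ref{3.1}. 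So your approach costs somewhat more work inside the lemma but delivers the key quotient-group identity of Proposition \ref{3.3} up front; there is no circularity, since your kernel computation invokes only the already-established first assertion.
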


\begin{proof}
Note that for every sequence $S\in\mathcal B(G)$, we have that $\pi(S)\subseteq G'$. Then
	$\pi(\mathcal B(G_0))\subseteq \pi(\mathcal B(G)) \subseteq G'$. To show $G'\subseteq \pi(\mathcal B(G_0))$, it suffices to show $ghg^{-1}h^{-1}\in \pi(\mathcal B(G_0))$ for every $g,h\in G$. Let $g,h\in G=\langle G_0\rangle=[G_0]$. Then there exist $S_g, S_h, S_{g^{-1}},S_{h^{-1}}\in \mathcal F(G_0)$ such that $g\in\pi(S_g)$, $h\in\pi(S_h), g^{-1}\in\pi(S_{g^{-1}})$, and $h^{-1}\in\pi(S_{h^{-1}})$. Therefore $\{1,ghg^{-1}h^{-1}\}\subseteq\pi(S_g\bdot S_h\bdot S_{g^{-1}}\bdot S_{h^{-1}})$ and we are done.

For the moreover statement, let $S=g\in \mathcal F(G)$ with $S=\frac{S_1}{S_2}\in \mathsf q(\mathcal B(G_0))$, where $S_1,S_2\in \mathcal B(G_0)$. Then $\supp(S)\subseteq G_0$ and $\pi(S_1), \pi(S_2)\subseteq G'$, whence $g\in \pi(S_1)\subseteq G'$, i.e., $\supp(S)\subseteq G'$.
For the other inclusion, let $S=g\in \mathcal F(G_0)$ with $g\in G'$. Then $g^{-1}\in G'$ and hence $g^{-1}\in \pi(\mathcal B(G_0))$. There exists $T\in \mathcal B(G_0)$ such that $g^{-1}\in \pi(T)$, whence $S=\frac{g\bdot T}{T}\in \mathsf q(\mathcal B(G_0))$ and we are done.
\end{proof}

In view of the previous lemma we obtain the following: If $G_0\subseteq \langle G_0\rangle'$ (in case $G_0=G$, we speak of \textit{perfect groups}), then $\mathsf q(\mathcal B(G_0))=\mathsf q(\mathcal F(G_0))$, since by $\{S\in\mathcal F(G_0)\colon |S|=1\}\subseteq \mathsf q(\mathcal B(G_0))$ it follows that $\mathcal F(G_0)\subseteq \mathsf q(\mathcal B(G_0))$. Therefore $\mathsf q(\mathcal B(G_0))\cap \mathcal F(G_0)=\mathcal F(G_0)$. If $\langle G_0\rangle'$ is trivial, it is well known that $\mathcal B(G_0)$ is saturated in $\mathcal F(G_0)$. In this sense we can consider the commutator subgroup $\langle G_0\rangle'$ to be a measure of how non-saturated $\mathcal B(G_0)\subseteq \mathcal F(G_0)$ is.\\

The first statement of the following lemma is known for finite groups (\cite[Lemma 3.3]{Oh20a}). Nonetheless, we provide its simple proof.

\smallskip
\begin{lemma} \label{3.2}
Let $G$ be a group and let $G_0\subseteq G$ be a subset.
\begin{enumerate}
\item A  submonoid $H\subseteq \mathcal{B}(G_0)$ is divisor closed if and only if there exists a subset $G_1\subseteq G_0$ such that $H=\mathcal{B}(G_1)$.

\item Let $G_1 \subseteq G_0$ be a subset consisting of torsion elements. Then $G_1$ is a condensed subset and $\mathcal B (G_1)$  is a divisor-closed submonoid of $\mathcal B(G_0)$.
\end{enumerate}

\end{lemma}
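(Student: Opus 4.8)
The plan is to prove the two directions of (1) and then read off (2) from it. The easy direction is immediate: assuming $H=\mathcal B(G_1)$ for some $G_1\subseteq G_0$, suppose $S\in\mathcal B(G_0)$ divides some $T\in\mathcal B(G_1)$ in $\mathcal B(G_0)$. Then $S$ divides $T$ already in $\mathcal F(G_0)$, so $\supp(S)\subseteq\supp(T)\subseteq G_1$, and since $1_G\in\pi(S)$ we conclude $S\in\mathcal B(G_1)=H$. Thus $H$ is divisor closed. Applied to an arbitrary $G_1\subseteq G_0$, this same remark will also settle the divisor-closedness assertion in (2).

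For the converse in (1), I would let $H\subseteq\mathcal B(G_0)$ be divisor closed and set $G_1=\bigcup_{T\in H}\supp(T)\subseteq G_0$; the inclusion $H\subseteq\mathcal B(G_1)$ is then clear by the choice of $G_1$. The heart of the argument, and the step I expect to be the main obstacle, is the reverse inclusion $\mathcal B(G_1)\subseteq H$: given $S\in\mathcal B(G_1)$ I must produce some $W\in H$ with $S\mid_{\mathcal B(G_0)}W$, so that divisor-closedness forces $S\in H$. The difficulty is that $S\mid W$ in $\mathcal F(G_0)$ does not by itself yield $S\mid_{\mathcal B(G_0)}W$, since $\mathcal B(G_0)$ is in general \emph{not} saturated in $\mathcal F(G_0)$ (this is precisely what the commutator subgroup measures); I must arrange that the complementary sequence $W\bdot S^{[-1]}$ is itself product-one.

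To achieve this I would list the terms of $S$ with repetition as $S=s_1\bdot\cdots\bdot s_\ell$ and, for each $j$, choose $T_j\in H$ with $s_j\in\supp(T_j)$, which is possible because $s_j\in G_1$. Setting $W=T_1\bdot\cdots\bdot T_\ell\in H$, one has $S\mid W$ in $\mathcal F(G_0)$, since each distinct element of $G_1$ occurs in $W$ with at least its multiplicity in $S$. Writing $A_j=T_j\bdot s_j^{[-1]}$, the cyclic-permutation trick recorded in Section \ref{2} gives $s_j^{-1}\in\pi(A_j)$, and the complement factors as $W\bdot S^{[-1]}=\prod_{j=1}^\ell A_j$. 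Because $\pi(A\bdot B)\supseteq\pi(A)\pi(B)$, for any permutation $\sigma$ of $[1,\ell]$ we obtain $s_{\sigma(1)}^{-1}\cdots s_{\sigma(\ell)}^{-1}\in\pi(W\bdot S^{[-1]})$. Finally, since $1_G\in\pi(S)$ there is a permutation $\tau$ with $s_{\tau(1)}\cdots s_{\tau(\ell)}=1_G$; taking the blocks in the reversed order $\sigma(i)=\tau(\ell+1-i)$ then yields $s_{\tau(\ell)}^{-1}\cdots s_{\tau(1)}^{-1}=1_G\in\pi(W\bdot S^{[-1]})$, so $W\bdot S^{[-1]}\in\mathcal B(G_0)$ and hence $S\mid_{\mathcal B(G_0)}W$, as required. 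This gives $S\in H$, completing (1).

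For (2), the divisor-closedness of $\mathcal B(G_1)$ in $\mathcal B(G_0)$ is exactly the instance of the easy direction of (1) noted above, so only condensedness remains. That part is routine: each $g\in G_1$ is torsion, so $g^{[\ord(g)]}\in\mathcal B(G_1)$ has $g$ in its support, whence $G_1$ is condensed.
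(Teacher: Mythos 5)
Your proposal is correct and follows essentially the same route as the paper: the same choice of $G_1=\bigcup_{T\in H}\supp(T)$, the same auxiliary sequences $T_j$ with $s_j\in\supp(T_j)$, and the same cyclic-permutation observation $s_j^{-1}\in\pi(T_j\bdot s_j^{[-1]})$ to conclude that $W\bdot S^{[-1]}$ is product-one. The only difference is that you spell out the reversal of the permutation realizing $1\in\pi(S)$ in slightly more detail than the paper does.
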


\begin{proof}
1. Let $G_1\subseteq G_0$. Then clearly $\mathcal B(G_1)\subseteq \mathcal B(G_0)$ is divisor closed. Conversely, let $H\subseteq \mathcal B(G_0)$ be a divisor closed submonoid. We define $G_1=\bigcup_{B\in H}\supp(B)$ and hence  $H\subseteq \mathcal B(G_1)$. Let $S=g_1\bdot\ldots\bdot g_n\in \mathcal B(G_1)$, where $g_1,\ldots, g_n\in G_1$. By the definition of $G_1$,  there exist $T_1,\ldots,T_n\in H$ such that $g_i\in\supp(T_i)$ for all $i\in[1,n]$. It follows that $g_i^{-1}\in\pi(T_i\bdot g_i^{[-1]})$ for all $i\in [1,n]$ and hence $1\in \pi(g_1^{-1}\bdot\ldots\bdot g_n^{-1})\subseteq \pi(T_1\bdot\ldots\bdot T_n\bdot g_1^{[-1]}\bdot \ldots\bdot g_n^{[-1]})$. Therefore
 $S$ divides $T_1\bdot\hdots \bdot T_n$ in $\mathcal B(G_0)$. By $H\subseteq\mathcal B(G_0)$ being divisor closed, we obtain $S\in H$ and we are done.

2. It is sufficient to show that $G_1$ is condensed. But this is clear, since for all $g\in G_1$ we have that $g^{[\ord(g)]}\in\mathcal B(G_1)$.
\end{proof}

 Lemma \ref{3.2}.2 need not be true  in the non-torsion case. Indeed,
 if  $G=\langle a,b \colon a^n=b^n=1\rangle$ and $G_0=\{a,b,ab\}$, then $a$ and $b$ have finite order, but $ab$ has infinite order. Moreover, $G_0$ is condensed, since $ab\bdot a^{[n-1]}\bdot b^{[n-1]}\in\mathcal B(G_0)$, but $G_1=\{ab\}$ is not. %{\color{red} this makes no sense here, ??}

Our next result generalizes and refines \cite[Theorem 3.2.1]{Cz-Do-Ge16a}.   Let $G$ be a group and  $G_0\subseteq G$ be a subset. The set
\begin{equation*}
\mathcal B(G_0)^{\ast}=\{S\in \mathcal F(G_0)\colon \pi(S)\subseteq \langle G_0\rangle'\}
\end{equation*}
 is a monoid with
\[
\mathcal B (G_0) \subseteq \mathcal B(G_0)^{\ast} \subseteq \mathcal F (G_0) \,.
\]
If $\langle G_0 \rangle$ is abelian, then $\mathcal B (G_0) = \mathcal B(G_0)^{\ast}$, and if $\langle G_0 \rangle$ is perfect, then $\mathcal B (G_0)^* = \mathcal F (G_0)$. Thus, in both extremal cases, $\mathcal B (G_0)^*$ is a Krull monoid, and the next proposition shows that this holds true in all cases.

\smallskip
\begin{proposition} \label{3.3}
Let $G$ be a group and let $G_0\subseteq G$ be a condensed subset such that $\langle G_0\rangle =G$.
\begin{enumerate}
\item The map
\begin{align*}
\Phi: \mathcal{F}(G_0)/\mathcal{B}(G_0) &\to G /G'\\
[S]_{\mathcal{F}(G_0)/\mathcal{B}(G_0)} &\mapsto gG' \text{ for any } g\in \pi (S),
\end{align*}
is a group isomorphism.
\item $\mathcal B(G_0)^{\ast}=\mathsf q(\mathcal B(G_0))\cap \mathcal F(G_0)$ is a saturated submonoid of $\mathcal F(G_0)$. In particular, $\mathsf q(\mathcal B(G_0))=\mathsf q(\mathcal B(G_0)^{\ast})$ and $\mathcal B(G_0)^{\ast}$ is a Krull monoid.

\item $\mathcal B(G_0)^{\ast}\hookrightarrow \mathcal F(G_0)$ is a divisor theory if and only if for all $g\in G_0$ we have that $[hG'\colon h\in G_0]=[hG'\colon h\in G_0\setminus \{g\}]$. In this case, $\mathcal C(\mathcal B(G_0)^{\ast})\cong G /G'$.

\item Suppose $|G|\neq 2$. Then the inclusion $\mathcal B(G)^{\ast}\hookrightarrow \mathcal F(G)$ is a divisor theory, $\mathcal C(\mathcal B(G)^{\ast})\cong G/G'$ and each class contains precisely $|G'|$ prime divisors.
\item The following are equivalent.
\begin{enumerate}
	\item[(a)] $\widehat{\mathcal B(G_0)}\subseteq \mathcal F(G_0)$ is saturated.
	\item[(b)] The map $\varphi\colon\mathcal C(\widehat{\mathcal B(G_0)}, \mathcal F(G_0))\rightarrow G/G '$ defined by  $\varphi\big([S]_{\widehat{\mathcal B(G_0)}}^{\mathcal F(G_0)}\big)=gG'$, where $g\in\pi(S)$, is an isomorphism.
	\item[(c)] $\widehat{\mathcal B(G_0)}=\mathcal B(G_0)^{\ast}$.
\end{enumerate}
\end{enumerate}
\end{proposition}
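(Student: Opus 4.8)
The plan is to route everything through the surjective monoid homomorphism $\sigma\colon\mathcal F(G_0)\to G/G'$ sending a sequence $S$ to $gG'$ for any $g\in\pi(S)$; this is well defined since any two elements of $\pi(S)$ differ by a commutator, it is multiplicative, and it is onto because $\langle G_0\rangle=G$ forces $G/G'=[hG'\colon h\in G_0]$. By construction $\mathcal B(G_0)^\ast=\sigma^{-1}(1)$. The foundational step is the identity $\mathcal B(G_0)^\ast=\mathsf q(\mathcal B(G_0))\cap\mathcal F(G_0)$ of part (2). For ``$\subseteq$'', given $S$ with $\pi(S)\subseteq G'$ I pick $g\in\pi(S)\subseteq G'=\pi(\mathcal B(G_0))$ (Lemma~\ref{3.1}) and $T\in\mathcal B(G_0)$ with $g^{-1}\in\pi(T)$; then $1\in\pi(S\bdot T)$, so $S\bdot T\in\mathcal B(G_0)$ and $S=(S\bdot T)\bdot T^{-1}\in\mathsf q(\mathcal B(G_0))$. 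For ``$\supseteq$'', if $S=S_1S_2^{-1}\in\mathcal F(G_0)$ with $S_i\in\mathcal B(G_0)$, then $\sigma(S)=\sigma(S_1)\sigma(S_2)^{-1}=1$ because $\pi(\mathcal B(G_0))\subseteq G'$. This identity yields saturation at once: $\mathsf q(\mathcal B(G_0)^\ast)=\mathsf q(\mathcal B(G_0))$ (as $\mathcal B(G_0)\subseteq\mathcal B(G_0)^\ast\subseteq\mathsf q(\mathcal B(G_0))$), whence $\mathsf q(\mathcal B(G_0)^\ast)\cap\mathcal F(G_0)=\mathcal B(G_0)^\ast$; and since a saturated submonoid of the free abelian, hence Krull, monoid $\mathcal F(G_0)$ is Krull \cite{Ge-HK06a}, part (2) follows.

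For (1) I extend $\sigma$ to a group homomorphism $\mathsf q(\mathcal F(G_0))\to G/G'$, still denoted $\sigma$; then $\Phi$ is exactly the map induced on $\mathsf q(\mathcal F(G_0))/\mathsf q(\mathcal B(G_0))=\mathcal F(G_0)/\mathcal B(G_0)$, so it is a well-defined surjective homomorphism and it remains to show $\ker\sigma=\mathsf q(\mathcal B(G_0))$. The inclusion $\mathsf q(\mathcal B(G_0))\subseteq\ker\sigma$ is clear. Conversely, writing an element of $\ker\sigma$ as $S_1S_2^{-1}$ with $S_i\in\mathcal F(G_0)$ and choosing $g_i\in\pi(S_i)$, I append $T\in\mathcal F(G_0)$ with $g_2^{-1}\in\pi(T)$ (possible since $g_2\in G=[G_0]$); then $S_2\bdot T\in\mathcal B(G_0)$, while $\sigma(S_1\bdot T)=\sigma(S_1)\sigma(S_2)^{-1}=1$ gives $S_1\bdot T\in\mathcal B(G_0)^\ast\subseteq\mathsf q(\mathcal B(G_0))$, so $S_1S_2^{-1}=(S_1\bdot T)(S_2\bdot T)^{-1}\in\mathsf q(\mathcal B(G_0))$. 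Hence $\Phi$ is an isomorphism.

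Part (3) is where the real work lies. As $\mathcal B(G_0)^\ast\hookrightarrow\mathcal F(G_0)$ is saturated, i.e. a divisor homomorphism into a free abelian monoid, by definition it is a divisor theory precisely when every prime $g\in G_0$ is the greatest common divisor in $\mathcal F(G_0)$ of some finite nonempty subset of $\mathcal B(G_0)^\ast$. Unwinding $\gcd$ through the multiplicities $\mathsf v_h$, and using that each witness has finite support, this is equivalent to: (i) for every $g\in G_0$ there is $S\in\mathcal B(G_0)^\ast$ with $\mathsf v_g(S)=1$, and (ii) for all $g\ne h$ in $G_0$ there is $S\in\mathcal B(G_0)^\ast$ with $\mathsf v_g(S)\ge1$ and $\mathsf v_h(S)=0$. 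I then match (i)$\wedge$(ii) with the stated condition. Writing additively in $A:=G/G'$ and $\bar h:=hG'$, and recalling $[hG'\colon h\in G_0]=A$, the condition says $M_g:=[hG'\colon h\in G_0\setminus\{g\}]=A$ for every $g$. Assuming this, (i) follows by realizing $-\bar g$ as a nonnegative combination of $\{\bar h\colon h\ne g\}$ and prepending one $g$, and (ii) follows by realizing $-\bar g$ inside $M_h=A$, i.e. avoiding the prime $h$. The converse is the main obstacle: (i) gives $-\bar g\in M_g$; for $h\ne g$, applying (ii) to the pair $(h,g)$ gives $S$ with $\mathsf v_h(S)=k\ge1$ and $\mathsf v_g(S)=0$, and $\sigma(S)=0$ reads $-k\bar h\in M_g$, which combined with $(k-1)\bar h\in M_g$ (valid since $\bar h\in M_g$) yields $-\bar h\in M_g$. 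Thus $M_g$ contains the negatives of all its generators, hence is a subgroup; as it also contains $-\bar g$ and therefore $\bar g$, we conclude $M_g=A$. This last move is exactly what rescues the argument when $A$ has elements of infinite order, where $-k\bar h\in M_g$ alone would not give $-\bar h\in M_g$. Finally, when the inclusion is a divisor theory, $\mathcal C(\mathcal B(G_0)^\ast)=\mathsf q(\mathcal F(G_0))/\mathsf q(\mathcal B(G_0)^\ast)\cong G/G'$ by (1) and (2).

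For (4) I specialize to $G_0=G$: the condition requires $\{hG'\colon h\in G\setminus\{g\}\}$ to generate $G/G'$ for each $g$. Since $|G|\ge3$ (the case $|G|=1$ being trivial), for any $g$ I choose $x\in G\setminus\{1,g\}$ and set $y=x^{-1}g\in G\setminus\{g\}$, so $gG'=(xG')(yG')$ lies in the submonoid generated by the remaining classes; the condition holds, while it fails exactly for $|G|=2$. The class group is $G/G'$ by (3), and since the primes are the elements of $G$ with $g$ lying in class $gG'$, each class $aG'$ contains precisely the $|G'|$ elements of that coset. For (5) I use that $\mathcal B(G_0)\subseteq\widehat{\mathcal B(G_0)}\subseteq\mathcal B(G_0)^\ast$ always (the complete integral closure lies in the completely integrally closed monoid $\mathcal B(G_0)^\ast$) and $\mathsf q(\widehat{\mathcal B(G_0)})=\mathsf q(\mathcal B(G_0))$. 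Then (a)$\Leftrightarrow$(c) is immediate from (2): a saturated $\widehat{\mathcal B(G_0)}$ equals $\mathsf q(\mathcal B(G_0))\cap\mathcal F(G_0)=\mathcal B(G_0)^\ast$, and conversely $\mathcal B(G_0)^\ast$ is saturated. For (b), I first note $\varphi$ is the well-defined surjective homomorphism induced by $\sigma$: every class has a representative $S$ with $S\bdot T\in\mathcal B(G_0)$ for suitable $T$, and since $\widehat{\mathcal B(G_0)}\subseteq\mathcal B(G_0)^\ast=\sigma^{-1}(1)$, $\widehat{\mathcal B(G_0)}$-equivalent sequences share the same $\sigma$-value. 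Assuming (c), saturation forces $\widehat{\mathcal B(G_0)}$-equivalence to coincide with the congruence of $\sigma$ (for a saturated submonoid, $y$ and $y'$ are equivalent iff $\sigma(y)=\sigma(y')$), so $\varphi$ is an isomorphism; conversely, if $\varphi$ is injective then every $S\in\mathcal B(G_0)^\ast$ has $\sigma(S)=1=\sigma(1)$, hence $[S]=[1]$, and taking $x=1$ in this equivalence gives $S\in\widehat{\mathcal B(G_0)}$, so $\mathcal B(G_0)^\ast\subseteq\widehat{\mathcal B(G_0)}$ and (c) holds.
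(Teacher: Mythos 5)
Your proof is correct, and on the core items it follows the same route as the paper: part (2) is argued identically via Lemma \ref{3.1}; part (3) rests on the same two observations, namely that for the saturated embedding $\mathcal B(G_0)^{\ast}\hookrightarrow\mathcal F(G_0)$ the divisor-theory property reduces to producing elements of $\mathcal B(G_0)^{\ast}$ with prescribed multiplicities, and that the stated condition amounts to each submonoid $M_g=[hG'\colon h\in G_0\setminus\{g\}]$ being a group containing $gG'$ (your two-step derivation of $-\overline{h}\in M_g$ from $-k\overline{h}$ and $(k-1)\overline{h}$ is the paper's one-step device of splitting a single copy of $h$ off the witness sequence, so your ``rescue'' remark is about your own formulation, not an extra difficulty); and part (4) uses the identical factorization $g=x(x^{-1}g)$. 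You genuinely deviate in two places. First, you prove (2) before (1) and obtain injectivity of $\Phi$ by showing that the kernel of the extension of $\sigma$ to $\mathsf q(\mathcal F(G_0))$ equals $\mathsf q(\mathcal B(G_0))$, which leans on $\mathcal B(G_0)^{\ast}\subseteq\mathsf q(\mathcal B(G_0))$ and on the (standard, cofinality-dependent) identification $\mathcal F(G_0)/\mathcal B(G_0)\cong\mathsf q(\mathcal F(G_0))/\mathsf q(\mathcal B(G_0))$, whereas the paper proves injectivity directly by exhibiting $C_1,C_2\in\mathcal B(G_0)$ with $S_1\bdot C_1=S_2\bdot C_2$; the underlying constructions (appending a sequence realizing an inverse) are nearly the same, so this is reorganization rather than new content. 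Second, and more substantially, in part (5) the paper gets (a)$\Leftrightarrow$(b) by citing \cite[Proposition 2.8.7.3]{Ge-HK06a} and then proves (a)$\Leftrightarrow$(c), while you prove (b)$\Leftrightarrow$(c) from scratch: under (c) the $\widehat{\mathcal B(G_0)}$-equivalence classes are exactly the fibers of $\sigma$, and conversely injectivity of $\varphi$, tested against the class of the empty sequence, forces $\mathcal B(G_0)^{\ast}\subseteq\widehat{\mathcal B(G_0)}$. Your treatment is self-contained (it in effect re-proves the cited lemma in this special case); the paper's is shorter. Both are sound.
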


\begin{proof}
1. First, note that $G_0$ is condensed if and only if $\mathcal B(G_0)\subseteq \mathcal F(G_0)$ is cofinal, whence $\mathcal F(G_0)/\mathcal B(G_0)$ is a group.
For $S\in\mathcal F(G_0)$ we denote $[S]_{\mathcal F(G_0)/\mathcal B(G_0)}$ by $[S]$ and recall that $[S_1]=[S_2]$ if and only if there exist $C_1,C_2\in \mathcal B(G_0)$ such that $S_1\bdot C_1=S_2\bdot C_2$.

To show that $\Phi$ is well-defined, let $S_1,S_2\in\mathcal F(G_0)$ with $[S_1]=[S_2]$ and let $g_1\in\pi(S_1)$ and $g_2\in \pi(S_2)$. Then there exist $C_1,C_2\in\mathcal B(G_0)$ such that $S_1\bdot C_1=S_2\bdot C_2$. As
$g_1\in \pi(S_1)\subseteq \pi(S_1\bdot C_1)=\pi(S_2\bdot C_2)\subseteq g_2 G'$, we obtain
 $g_1 G'=g_2 G'$.

 Obviously, $\Phi$ is a group homomorphism.
For the surjectivity, let $g\in G=\langle G_0\rangle=[G_0]$. Then there is $S\in \mathcal F(G_0)$ such that $g\in\pi(S)$.
To prove injectivity, let $S_1,S_2\in\mathcal F(G_0)$ with $g_1\in\pi(S_1),g_2\in\pi(S_2)$ such that $g_1 G'=g_2 G'$. Then there exists $h\in G'$ such that $g_1=g_2h$. By Lemma \ref{3.1}, there exists $T\in\mathcal B(G_0)$ with $h\in \pi(T)$. Since $G=[G_0]$, there is a sequence $W\in \mathcal F(G_0)$ such that $g_1^{-1}\in \pi(W)$, whence $1=g_1g_1^{-1}1\in \pi(S_1\bdot W\bdot T)$ and $1=g_2hg_1^{-1}\in\pi(S_2\bdot T\bdot W)$.
Note that $S_1\bdot (S_2\bdot T\bdot W)=S_2\bdot (S_1\bdot W\bdot T)$ and hence $[S_1]=[S_2]$.

2. Let $S\in \mathcal B(G_0)^*$. Then $\pi(S)\subseteq G'$. By Lemma \ref{3.1}, there exists $T\in \mathcal B(G_0)$ such that $1\in \pi(S\bdot T)$. Therefore $S=\frac{S\bdot T}{T}\in \mathsf q(\mathcal B(G_0))\cap \mathcal F(G_0)$.
Let $S\in \mathsf q(\mathcal B(G_0))\cap \mathcal F(G_0)$. Then there exist $S_1,S_2\in \mathcal B(G_0)$ such that $S=\frac{S_1}{S_2}$, whence $\pi(S)\subseteq \pi(S_1)\pi(S_2)^{-1}\subseteq G'G'=G'$. Therefore $S\in \mathcal B(G_0)^*$.

We proved $\mathcal B(G_0)^*=\mathsf q(\mathcal B(G_0))\cap \mathcal F(G_0)$. Since $\mathcal B(G_0)\subseteq \mathcal B(G_0)^*\subseteq \mathsf q(\mathcal B(G_0))$, we obtain $\mathsf q(\mathcal B(G_0)^*)=\mathsf q(\mathcal B(G_0))$ and hence $\mathcal B(G_0)^*=\mathsf q(\mathcal B(G_0)^*)\cap \mathcal F(G_0)$.
It follows that $\mathcal B(G_0)^*$ is a saturated submonoid of $\mathcal F(G_0)$. In particular, as a saturated submonoid of a Krull monoid, $\mathcal B(G_0)^{\ast}$ is Krull \cite[Proposition 2.4.4.3]{Ge-HK06a}.

3.
Suppose $\mathcal B(G_0)^{\ast}\hookrightarrow \mathcal F(G_0)$ is a divisor theory and let $g\in G_0$. It suffices to show $gG'\in [fG'\colon f\in G_0\setminus \{g\}]$.
Note that for every $h\in G_0$, there exist  $n\in \N$ and $S_1,\hdots ,S_n\in \mathcal B(G_0)^{\ast}$ such that $h=\gcd(S_1,\hdots , S_n)$. If $h\neq g$, then there exists $i\in [1,n]$ such that $\mathsf v_h(S_i)>0$  and $\mathsf v_g(S_i)=0$. Assume that $S_i=h\bdot h_1\bdot\ldots\bdot h_{t}$ with $hh_1\ldots h_t\in G'$, where $t\in \N$ and $h_1,\ldots,h_{t}\in G_0\setminus\{g\}$. Then $h^{-1}G'=(h_1G')\ldots (h_tG')\in [fG'\colon f\in G_0\setminus \{g\}]$.
 This implies that $[fG'\colon f\in G_0\setminus \{g\}]$ is indeed an abelian group. If $h=g$, then there exists $i\in [1,n]$ such that $\mathsf v_g(S_i)=1$, whence $g^{-1}G'\in [fG'\colon f\in G_0\setminus \{g\}]$. It follows by the fact that $[fG'\colon f\in G_0\setminus \{g\}]$ is a group that $gG'\in [fG'\colon f\in G_0\setminus \{g\}]$.
%
%Then there exist $S_1,\hdots ,S_n\in \mathcal B(G_0)^{\ast}$ such that $g=\gcd\{S_1,\hdots , S_n\}$, so there is $i\in[1,n]$ with $\mathsf v_g(S_i)=1$. Suppose that $S_i=g\bdot g_1\bdot\hdots \bdot g_l$, then for all $j\in[1,l]$ there are $D_1,\hdots , D_t\in \mathcal B(G_0)^{\ast}$ with $g_j=\gcd\{D_1,\hdots ,D_t\}$. Clearly, there exists $k\in[1,t]$ such that $\mathsf v_g(D_k)=0$. Suppose $D_k=g_j\bdot C_k$ with $C_k\in\mathcal F(G_0\setminus\{g\})$, then $g_j^{-1}G'=hG'$ for every $h\in\pi(C_k)$. Now $gG'=g_1^{-1}G'\cdots g_l^{-1}G'\in[hG'\mid h\in G_0\setminus \{g\}]$.

Suppose for all $g\in G_0$, we have that $[hG'\colon h\in G_0]=[hG'\colon h\in G_0\setminus \{g\}]$. To show $\mathcal B(G_0)^{\ast}\hookrightarrow \mathcal F(G_0)$ is a divisor theory, by 2., it suffices to prove for every $g\in G_0$, there exist $T_1,\ldots, T_n\in \mathcal B(G_0)^{\ast}$ such that $g=\gcd(T_1,\ldots, T_n)$, where $n\in \N$.
Let $g\in G_0$. Note that $[G_0]=G$, whence $[hG':h\in G_0\setminus \{g\}]=G/G'$. So there exists $W=g_1\bdot\ldots\bdot g_{\ell}\in \mathcal F(G_0\setminus\{g\})$ such that $\pi(g\bdot W)\subseteq G'$. Furthermore, for every $j\in [1,\ell]$, there exists $W_j\in \mathcal F(G_0\setminus \{g_j\})$ such that $\pi(g\bdot W_j)\subseteq G'$. It follows that $\gcd(g\bdot W, g\bdot W_1,\ldots, g\bdot W_{\ell})=g$ and we are done.
If this is the case, we obtain that $\mathcal C(\mathcal B(G_0)^*)=\mathcal F(G_0)/\mathcal B(G_0)^*$ just by the very definition of the class group \cite[Definition 2.4.9]{Ge-HK06a}. Since by 2. we have that $\mathsf{q}(\mathcal B(G_0)^*)=\mathsf q(\mathcal B(G_0))$, we obtain by 1. that $\mathcal C(\mathcal B(G_0)^*)=\mathcal F(G_0)/\mathcal B(G_0)^*=\mathcal F(G_0)/\mathcal B(G_0)\cong G/G'$.

4. If $|G|=1$, the statement is trivial.  Suppose $|G|\geq 3$. By 3. we just need to show that $G=[G\setminus\{g\}]$ for all $g\in G$. Let $g\in G$. Then there is $h\in G\setminus \{1, g\}$ such that $g=h(h^{-1}g)\in[G\setminus \{g\}]$ and we are done.

It remains to prove the statement on the prime divisors. Let $S\in\mathcal F(G)$ with $g\in\pi(S)$ and let $h\in G$.
It suffices to show $[S]=[h]$ if and only if $h\in gG'$. In fact, if $[S]=[h]$, then there exist $C_1,C_2\in \mathcal B(G)^*$ such that $h=\frac{S\bdot C_1}{C_2}$ and hence $h\in \pi(S)\pi(C_1)\pi(C_2)^{-1}\subseteq gG'$. If $h\in gG'$, then $hg^{-1}\in G'$ and thus $h\bdot g^{-1}, S\bdot g^{-1}\in \mathcal B(G_0)^\ast$. It follows that $S\bdot (h\bdot g^{-1})=h\bdot (S\bdot g^{-1})$.

5. $(a)\Leftrightarrow (b)$ Since $G_0$ is condensed, we obtain $\mathcal B(G_0)\subseteq \mathcal F(G_0)$ and hence $\widehat{\mathcal B(G_0)}\subseteq \mathcal F(G_0)$ are cofinal. By 1., we have that $\mathcal F(G_0)/\widehat{\mathcal B(G_0)}=\mathcal F(G_0)/\mathcal B(G_0)\cong G /G'$. By \cite[Prop. 2.8.7.3]{Ge-HK06a}, the map $\varphi$ is  a well-defined epimorphism and
$\widehat{\mathcal B(G_0)}\subseteq \mathcal F(G_0)$ is saturated if and only if $\varphi$ is an isomorphism.

$(a)\Leftrightarrow (c)$ It follows by 2. that $\widehat{\mathcal B(G_0)}\subseteq \mathcal F(G_0)$ is saturated if and only if $\widehat{\mathcal B(G_0)}=\mathsf q(\widehat{\mathcal B(G_0)})\cap \mathcal F(G_0)=\mathsf q(\mathcal B(G_0)^*)\cap \mathcal F(G_0)=\mathcal B(G_0)^*$.
\end{proof}

\smallskip
\begin{lemma}\label{3.6}
Let $G$ be a group and let  $G_0\subseteq G$ be a condensed subset such that $\langle G_0\rangle=G$. If  $G '$ is torsion or $G_0$ consists of torsion elements,  then $\widetilde{ \mathcal B(G_0)} =\widehat{\mathcal B(G_0)}=\mathcal B(G_0)^{\ast}$ is Krull.
\end{lemma}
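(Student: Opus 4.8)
The plan is to leverage Proposition \ref{3.3}.2, which tells us that $\mathcal B(G_0)^{\ast} = \mathsf q(\mathcal B(G_0)) \cap \mathcal F(G_0)$ is a Krull monoid; in particular it is both root closed and completely integrally closed, so $\widetilde{\mathcal B(G_0)^{\ast}} = \widehat{\mathcal B(G_0)^{\ast}} = \mathcal B(G_0)^{\ast}$. Since $\mathcal B(G_0) \subseteq \mathcal B(G_0)^{\ast}$ and both share the quotient group $\mathsf q(\mathcal B(G_0))$, the operators $\widetilde{\phantom{x}}$ and $\widehat{\phantom{x}}$ are monotone, giving $\widehat{\mathcal B(G_0)} \subseteq \widehat{\mathcal B(G_0)^{\ast}} = \mathcal B(G_0)^{\ast}$. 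Combined with the general fact $\widetilde{\mathcal B(G_0)} \subseteq \widehat{\mathcal B(G_0)}$ recorded in Section \ref{2}, everything will follow once I prove the single reverse inclusion
\[
\mathcal B(G_0)^{\ast} \subseteq \widetilde{\mathcal B(G_0)},
\]
because then $\mathcal B(G_0)^{\ast} \subseteq \widetilde{\mathcal B(G_0)} \subseteq \widehat{\mathcal B(G_0)} \subseteq \mathcal B(G_0)^{\ast}$ collapses into a chain of equalities, and Krullness transfers to all three monoids.

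So the whole proof reduces to the following claim: for every $S \in \mathcal B(G_0)^{\ast}$ there is some $n \in \N$ with $S^{[n]} \in \mathcal B(G_0)$. Note that $S \in \mathcal B(G_0)^{\ast}$ already gives $S \in \mathsf q(\mathcal B(G_0))$, so producing such an $n$ is exactly what is needed to place $S$ in $\widetilde{\mathcal B(G_0)}$. I would treat the two hypotheses separately. If $G'$ is torsion, I pick $g \in \pi(S) \subseteq G'$ and set $n = \ord(g)$; ordering the terms of $S$ so that their product is $g$ and then concatenating $n$ identical such blocks yields an ordering of $S^{[n]}$ with product $g^n = 1$, whence $S^{[n]} \in \mathcal B(G_0)$. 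If instead $G_0$ consists of torsion elements, I write $S = \prod_i g_i^{[m_i]}$ in its canonical form, let $n$ be a common multiple of the (finite) orders $\ord(g_i)$, and order $S^{[n]} = \prod_i g_i^{[n m_i]}$ by grouping equal terms together; the resulting product is $\prod_i g_i^{n m_i} = 1$, since $\ord(g_i) \mid n$ forces $g_i^{n m_i} = 1$ for each $i$.

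The main subtlety, and the step I would flag, is the torsion-support case: one cannot simply reuse the ``order of an element of $\pi(S)$'' trick, because an element of $\pi(S) \subseteq G'$ is guaranteed to be torsion only under the first hypothesis — a product of torsion elements can have infinite order (as in $D_\infty$, where $a$ and $b$ have order $2$ but $ab$ does not). The resolution is to exploit that we are free to reorder $S^{[n]}$, grouping all occurrences of each fixed group element so that each block contributes a power of a single torsion element; this is what makes the argument independent of $\pi(S)$ and turns mere torsion of the supports into product-one. A minor but worthwhile check is that $S^{[n]} \in \mathcal B(G_0)$ does not by itself force $S \in \mathsf q(\mathcal B(G_0))$, so it is essential to have started from $S \in \mathcal B(G_0)^{\ast}$ rather than from an arbitrary $S \in \mathcal F(G_0)$; this is precisely where the identity $\mathcal B(G_0)^{\ast} = \mathsf q(\mathcal B(G_0)) \cap \mathcal F(G_0)$ from Proposition \ref{3.3}.2 is used.
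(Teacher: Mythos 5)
Your proposal is correct and follows essentially the same route as the paper: reduce via Proposition \ref{3.3}.2 and the chain $\widetilde{\mathcal B(G_0)}\subseteq\widehat{\mathcal B(G_0)}\subseteq\mathcal B(G_0)^{\ast}$ to the single inclusion $\mathcal B(G_0)^{\ast}\subseteq\widetilde{\mathcal B(G_0)}$, then handle the two hypotheses with exactly the same exponents ($\ord(g)$ for $g\in\pi(S)\subseteq G'$ in the first case, the lcm of the orders of the support elements in the second). Your explicit remarks on why the two cases require different arguments and on why one must start from $S\in\mathsf q(\mathcal B(G_0))$ are correct refinements of points the paper leaves implicit.
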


\begin{proof}
We just proved in Proposition \ref{3.3}, that $\mathcal B(G_0)^{\ast}$ is a Krull monoid, having the same quotient group as $\mathcal B(G_0)$. Therefore $\widetilde{\mathcal B(G_0)}\subseteq \widehat{\mathcal B (G_0)}\subseteq \mathcal B(G_0)^{\ast}$ and it suffices to show $\mathcal B(G_0)^*\subseteq \widetilde{\mathcal B(G_0)}$.

Let $S=g_1\bdot\hdots\bdot g_{\ell} \in \mathcal B(G_0)^{\ast}$ and $g\in \pi(S)$. We have to verify that  $S\in \widetilde{\mathcal B(G_0)}$.
If $G '$ is torsion, then $g$ has finite order, whence $S^{[\ord(g)]}\in \mathcal B(G_0)$ and  $S\in\widetilde{\mathcal{B}(G_0)}$.
If $G_0$ consists of torsion elements, then for $\alpha=\lcm\{\ord(g_i)\colon i\in[1,n]\}$, we have that $S^{[\alpha]}\in\mathcal B(G_0)$, whence  $S\in\widetilde{\mathcal B(G_0)}$.
\end{proof}

The above statement provides sufficient conditions implying that $\widehat{\mathcal B(G_0)}=\mathcal B(G_0)^{\ast}$. However, in general, iterated complete integral closures of $\mathcal B (G_0)$ may still be proper submonoids of $\mathcal B(G_0)^{\ast}$, even for finite subsets $G_0$.

\smallskip
\begin{example}
Let $G=\langle a,b\rangle$ be the free group with two generators and consider the subset $G_0=\{a,a^{-1},b,b^{-1},aba^{-1}b^{-1}\}$. Then $\mathcal B(G_0)=\widehat{\mathcal B(G_0)}$, but $\mathcal B(G_0)\subsetneq \mathcal B(G_0)^{\ast}$.
\end{example}

\begin{proof}
It is easy to see, that $\mathcal A(G_0)=\{a\bdot a^{-1}, b\bdot b^{-1}, aba^{-1}b^{-1}\bdot a\bdot a^{-1}\bdot b\bdot b^{-1}\}$ and that therefore $\mathcal B(G_0)$ is factorial, since the occurence of the commutator element $aba^{-1}b^{-1}$ in a sequence determines its factorization uniquely. Thus $\mathcal B(G_0)$ is completely integrally closed, but $aba^{-1}b^{-1}\in\mathcal B(G_0)^{\ast}\setminus \mathcal B(G_0)$.
\end{proof}

\smallskip
\begin{proposition}\label{3.7}
Let $G$ be a group and $G_0\subseteq G$ be a subset consisting of torsion elements. Then the following are equivalent.
\begin{enumerate}
\item[(a)] $\mathcal B(G_0)$ is Krull.
\item[(b)] $\mathcal B(G_0)$ is root closed.
\item[(c)] $\mathcal B(G_0)\subseteq \mathcal F(G_0)$ is saturated.
\item [(d)]$\mathcal B(G_0)=\mathcal B(G_0)^{\ast}$.
\end{enumerate}
\end{proposition}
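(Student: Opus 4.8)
The plan is to reduce everything to the two structural results already in hand, namely Lemma \ref{3.6}, which under the torsion hypothesis identifies the root closure $\widetilde{\mathcal B(G_0)}$ with $\mathcal B(G_0)^*$, and Proposition \ref{3.3}.2, which exhibits $\mathcal B(G_0)^* = \mathsf q(\mathcal B(G_0)) \cap \mathcal F(G_0)$ as a saturated, hence Krull, submonoid of $\mathcal F(G_0)$. Both of those are formulated for condensed $G_0$ with $\langle G_0\rangle = G$, so I would first record the harmless reductions. Since $G_0$ consists of torsion elements, Lemma \ref{3.2}.2 shows that $G_0$ is condensed; and because $\mathcal B(G_0)$, its quotient group, and $\mathcal B(G_0)^*$ (which depends only on $\langle G_0\rangle'$) are all computed inside $\langle G_0\rangle$ and so are unchanged when $G$ is replaced by $\langle G_0\rangle$, I may assume $\langle G_0\rangle = G$ throughout. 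This brings the hypotheses of Lemma \ref{3.6} and Proposition \ref{3.3} into force.

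With those reductions the implications collapse almost immediately. For the cycle $(a)\Rightarrow(b)\Rightarrow(d)\Rightarrow(a)$: a Krull monoid is completely integrally closed, so $\widehat{\mathcal B(G_0)} = \mathcal B(G_0)$, and using $\widetilde{H}\subseteq\widehat{H}$ this forces $\mathcal B(G_0)$ to be root closed, giving $(a)\Rightarrow(b)$. If $\mathcal B(G_0)$ is root closed, then $\mathcal B(G_0) = \widetilde{\mathcal B(G_0)} = \mathcal B(G_0)^*$ by Lemma \ref{3.6}, which is exactly $(d)$. Finally $(d)\Rightarrow(a)$ holds because $\mathcal B(G_0)^*$ is Krull by Proposition \ref{3.3}.2 (equivalently, by Lemma \ref{3.6}).

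It remains to fold in $(c)$, and here the equivalence $(c)\Leftrightarrow(d)$ is the crispest of all: by definition the inclusion $\mathcal B(G_0)\subseteq\mathcal F(G_0)$ is saturated exactly when $\mathcal B(G_0) = \mathsf q(\mathcal B(G_0))\cap\mathcal F(G_0)$, and Proposition \ref{3.3}.2 identifies the right-hand side with $\mathcal B(G_0)^*$. Thus saturatedness is literally the assertion $\mathcal B(G_0) = \mathcal B(G_0)^*$, and $(c)\Leftrightarrow(d)$ needs no further argument.

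I do not anticipate a genuine obstacle, since the analytic content is entirely carried by Lemma \ref{3.6}; the proposition is a formal assembly of the quoted results. The one point that genuinely deserves care is the passage to $\langle G_0\rangle = G$: I must verify that each of (a)--(d) is intrinsic to the inclusion $\mathcal B(G_0)\subseteq\mathcal F(G_0)$ and to $\mathsf q(\mathcal B(G_0))$, so that shrinking the ambient group neither creates nor destroys any of the four properties. Once that is confirmed, the equivalences follow as above.
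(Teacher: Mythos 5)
Your proof is correct and follows essentially the same route as the paper's: both arguments reduce to Lemma \ref{3.2} (condensedness), Lemma \ref{3.6} (identifying the root closure with $\mathcal B(G_0)^{\ast}$ under the torsion hypothesis), and Proposition \ref{3.3}.2 (saturatedness and Krullness of $\mathcal B(G_0)^{\ast}$), differing only in the arrangement of the implication cycle. Your explicit attention to the reductions (condensedness and replacing $G$ by $\langle G_0\rangle$) is a point the paper leaves implicit, and your direct derivation of $(c)\Leftrightarrow(d)$ from the identity $\mathcal B(G_0)^{\ast}=\mathsf q(\mathcal B(G_0))\cap\mathcal F(G_0)$ is a slightly cleaner packaging of the same content.
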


\begin{proof}
$(a)\Rightarrow (b)$ is by definition and $(c)\Rightarrow (a)$ is by \cite[Proposition 2.4.4.3]{Ge-HK06a}.

$(b)\Rightarrow (c)$ By Lemma \ref{3.2}, we know $G_0$ is condensed. It follows by Lemma \ref{3.6} and Proposition \ref{3.3}.2, that $\mathcal B(G_0)=\mathcal B(G_0)^*$ is a saturated submonoid of $\mathcal F(G_0)$.

$(b)\Rightarrow (d)$ is clear by Lemma \ref{3.6} and $(d)\Rightarrow (b)$ follows from Proposition \ref{3.3}.
\end{proof}

\smallskip
\begin{proposition}\label{3.8}
Let $G$ be a group and let $G_0\subseteq G$ be a condensed subset. Then $s$-$\spec(\mathcal B(G_0))=\{\mathfrak p_X \colon  X\subseteq G_0\}$, where $\mathfrak p_X=\{S\in\mathcal B(G_0)\colon \text{there is } g\in X \text{ such that } \mathsf v_g(S)\geq 1\}$. In particular, $\mathfrak X(\mathcal B(G_0))\subseteq \{\mathfrak p_g\colon g\in G_0\}$.
\begin{enumerate}
\item If $G_0=G$ or $G_0$ consists of torsion elements, then equality holds and $|\mathfrak X(\mathcal B(G_0))|=|G_0|$.
\item If $\mathfrak X(\mathcal B(G_0))= \{\mathfrak p_g\colon g\in G_0\}$, then $\bigcap_{\mathfrak p\in\mathfrak X(\mathcal B(G_0))}\mathcal B(G_0)_{\mathfrak p}\subseteq \mathcal F(G_0)$.
\end{enumerate}

%Moreover, if $\mathfrak a\subseteq \mathcal B(G_0)$ is an $s$-ideal, then $\sqrt{\mathfrak a}=\{S\in\mathcal B(G_0)\colon \text{ there is } A\in \mathfrak a \text{ such that } \supp(A)\subseteq \supp(S)\}$.
\end{proposition}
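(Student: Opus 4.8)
The plan is to pin down $s$-$\spec(\mathcal B(G_0))$ by passing to complements and invoking the classification of divisor-closed submonoids in Lemma~\ref{3.2}.1. The starting observation is that the complement of any nonempty prime $s$-ideal is a divisor-closed submonoid: if $\mathfrak p \in s$-$\spec(\mathcal B(G_0))$, then $1 \notin \mathfrak p$ (a proper $s$-ideal contains no unit), the complement $\mathcal B(G_0) \setminus \mathfrak p$ is closed under multiplication by primality, and it is divisor closed because $S \mid_{\mathcal B(G_0)} S'$ with $S \in \mathfrak p$ would force $S' \in \mathfrak p$. Hence by Lemma~\ref{3.2}.1 there is a subset $G_1 \subseteq G_0$ with $\mathcal B(G_0) \setminus \mathfrak p = \mathcal B(G_1)$. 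Since $\mathcal B(G_1) = \{S \in \mathcal B(G_0) \colon \supp(S) \subseteq G_1\}$, taking complements yields $\mathfrak p = \{S \in \mathcal B(G_0) \colon \supp(S) \not\subseteq G_1\} = \mathfrak p_{G_0 \setminus G_1}$. Conversely, for $\emptyset \ne X \subseteq G_0$ one checks directly that $\mathfrak p_X$ is a proper $s$-ideal (since $1 \notin \mathfrak p_X$ and $\supp(ST) = \supp(S) \cup \supp(T)$), that it is prime, and that it is nonempty using condensedness; this gives the displayed equality. For the \emph{in particular} clause, observe $\mathfrak p_X = \bigcup_{g \in X} \mathfrak p_g$, so a minimal prime $\mathfrak p_X$ contains each nonempty prime $\mathfrak p_g$ with $g \in X$, and minimality forces $\mathfrak p_X = \mathfrak p_g$; thus $\mathfrak X(\mathcal B(G_0)) \subseteq \{\mathfrak p_g \colon g \in G_0\}$.

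For part~1 the inclusion $\mathfrak X(\mathcal B(G_0)) \subseteq \{\mathfrak p_g \colon g \in G_0\}$ is already in hand, so it remains to show that under the hypotheses each $\mathfrak p_g$ is minimal and that distinct $g$ yield distinct $\mathfrak p_g$. Both reduce to one claim: for $g \ne h$ in $G_0$ there is a product-one sequence whose support contains $h$ but not $g$, i.e.\ $\mathfrak p_h \not\subseteq \mathfrak p_g$. Granting this, if $\mathfrak p_Y \subseteq \mathfrak p_g$ is a nonempty prime then every $h \in Y$ satisfies $\mathfrak p_h \subseteq \mathfrak p_g$, whence $h = g$ and $\mathfrak p_Y = \mathfrak p_g$, proving minimality; and $\mathfrak p_g = \mathfrak p_h$ together with the claim forces $g = h$, proving distinctness, so $|\mathfrak X(\mathcal B(G_0))| = |G_0|$. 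The claim is immediate when $h$ has finite order, via $h^{[\ord(h)]}$ (support $\{h\}$), which disposes of the torsion case entirely. When $G_0 = G$ and $h$ has infinite order I would take $h \bdot h^{-1}$ when $g \ne h^{-1}$, and $h \bdot h \bdot (h^2)^{-1}$ when $g = h^{-1}$, checking in each case that the support (of size at most two) avoids $g$; here infinite order is exactly what rules out the degenerate coincidences $h = h^{-1}$, $h = h^{-2}$, $h^{-1} = h^{-2}$. This elementary but case-based construction is the only mildly technical point of the whole proposition, and I expect it to be the main obstacle.

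Part~2 is then essentially formal. Fix $x \in \bigcap_{\mathfrak p \in \mathfrak X(\mathcal B(G_0))} \mathcal B(G_0)_{\mathfrak p} \subseteq \mathsf q(\mathcal B(G_0)) \subseteq \mathsf q(\mathcal F(G_0))$. By hypothesis the intersection runs over all $\mathfrak p_g$ with $g \in G_0$, and $\mathcal B(G_0) \setminus \mathfrak p_g = \mathcal B(G_0 \setminus \{g\})$ consists exactly of the product-one sequences $S$ with $\mathsf v_g(S) = 0$. Writing $x = S_1/S_2$ with $S_1 \in \mathcal B(G_0)$ and $S_2 \in \mathcal B(G_0) \setminus \mathfrak p_g$ and applying the homomorphic extension of $\mathsf v_g$ to $\mathsf q(\mathcal F(G_0))$ gives $\mathsf v_g(x) = \mathsf v_g(S_1) - \mathsf v_g(S_2) = \mathsf v_g(S_1) \ge 0$. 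Since $g \in G_0$ was arbitrary, $\mathsf v_g(x) \ge 0$ for every $g \in G_0$, that is $x \in \mathcal F(G_0)$. The hypothesis $\mathfrak X(\mathcal B(G_0)) = \{\mathfrak p_g \colon g \in G_0\}$ is used precisely to secure the nonnegativity of $\mathsf v_g(x)$ for \emph{all} $g \in G_0$, rather than only for those $g$ whose $\mathfrak p_g$ happens to be minimal.
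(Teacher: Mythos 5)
Your proposal is correct and follows essentially the same route as the paper: the complement of a prime $s$-ideal is a divisor-closed submonoid classified by Lemma~\ref{3.2}.1, the separating sequences $h^{[\ord(h)]}$, $h\bdot h^{-1}$ and $h^{[2]}\bdot h^{-2}$ are exactly those used in the paper's proof of part~1, and your valuation argument for part~2 is the same cross-multiplication computation phrased via the extension of $\mathsf v_g$ to the quotient group. No gaps.
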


\begin{proof}
Let $ X\subseteq G_0$. Clearly, $\mathfrak p_X$ is a prime $s$-ideal of $\mathcal B(G_0)$. So it remains to show that every prime $s$-ideal can be written in such a form. Let $P\in s$-$\spec(\mathcal B(G_0))$. Then $\mathcal B(G_0)\setminus P$ is a divisor closed submonoid. By Lemma \ref{3.2}, we obtain $\mathcal B(G_0)\setminus P=\mathcal B(G_1)$ for some $G_1\subseteq G_0$. Therefore
 $$P=\{S\in\mathcal B(G_0)\colon \text{there is } g\in G_0\setminus G_1 \text{ such that } \mathsf v_g(S)\geq 1\}=\mathfrak p_{G_0\setminus G_1}\,.$$
 By definition, we know $ X\subseteq X'$ implies $\mathfrak p_X\subseteq \mathfrak p_{X'}$, whence $\mathfrak X(\mathcal B(G_0))\subseteq \{\mathfrak p_g\colon g\in G_0\}$.

	Suppose $G_0=G$. It suffices to show $\{\mathfrak p_g\colon g\in G\}\subseteq \mathfrak X(\mathcal B(G))$.
	Note that $g\bdot g^{-1} \in {\mathfrak p}_g \setminus {\mathfrak p}_h$ for all $h \in G \setminus \{g, g^{-1}\}$ and if $g\neq g^{-1}$, then $g^{[2]}\bdot g^{-2}\in \mathfrak p_g\setminus \mathfrak p_{g^{-1}}$, where $g,h\in G$. Therefore  ${\mathfrak p}_g \not\subseteq {\mathfrak p}_h$ for any distinct elements $g, h \in G$ and we are done.

	Suppose $G_0$ consists of torsion elements. It suffices to show $\{\mathfrak p_g\colon g\in G_0\}\subseteq \mathfrak X(\mathcal B(G_0))$.
	 Note that $g^{[\ord(g)]}\in {\mathfrak p}_g \setminus {\mathfrak p}_h$ for all $g\in G_0$ and all $h \in G_0 \setminus \{g\}$. Therefore  ${\mathfrak p}_g \not\subseteq {\mathfrak p}_h$ for any distinct elements $g, h \in G_0$ and we are done.

	 Suppose that $\mathfrak X(\mathcal B(G_0))= \{\mathfrak p_g\colon g\in G_0\}$ and that $S\in\bigcap_{\mathfrak p\in\mathfrak X(\mathcal B(G_0))}\mathcal B(G_0)_{\mathfrak p}$. Say $S=\frac{U}{T}$, where $U, T\in\mathcal F(G_0)$ with $\supp(U)\cap \supp(T)=\emptyset$. We have to prove that $T=1_{\mathcal F(G_0)}$. Since for every $g\in G_0$ we have that $S=\frac{U_g}{T_g}$ with $U_g,T_g\in\mathcal B(G_0)$ and $g\notin\supp(T_g)$, it follows by $U\bdot T_g=U_g\bdot T$ that $g\notin \supp(T)$ for all $g\in G_0$, what proves the assertion. 
%
%Moreover, let $\mathfrak a\subseteq \mathcal B(G_0)$ be an s-ideal. If $S\in\sqrt{\mathfrak a}$ then there exists $m\in\N$ such that $S^{[m]}\in \mathfrak a$ and clearly, $\supp(S^{[m]})=\supp(S)$. For the other inclusion let $S\in\mathcal B(G_0)$ such that there exists $A\in \mathfrak a$ with $\supp(A)\subseteq \supp(S)$. We have
%\begin{equation*}
%\sqrt{\mathfrak a}=\bigcap_{\substack{P\in s\text{-}\spec(\mathcal B(G_0))\\ \mathfrak a\subseteq P}} P=\bigcap_{\substack{Y\subseteq G_0\\\forall S\in \mathfrak a : \supp(S)\cap Y\neq\emptyset}}\mathfrak p_Y,
%\end{equation*}
%hence obviously $S\in \mathfrak a$.
\end{proof}

If $G$ is a finite group, it is well-known (and for the convenience of the reader this will also follow from Theorem \ref{3.11}) that $\mathcal B(G)$ is a C-monoid. The next statement shows that this is never the case for infinite groups.

\smallskip
\begin{proposition} \label{3.9}
Let $G$ be a group and $G_0\subseteq G$ be a condensed subset.
\begin{enumerate}
\item If $G_0$ is infinite, then $\mathcal B(G_0)$ is not a C-monoid defined in $\mathcal F(G_0)$.
\item If $G$ is infinite, then $\mathcal B(G)$ is not a C-monoid.
\item If $\mathcal B(G_0)$ is a C-monoid defined in $\mathcal F(G_0)$, then $\langle G_0\rangle /\langle G_0\rangle '$ is finite.
\end{enumerate}
\end{proposition}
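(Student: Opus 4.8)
Throughout, write $R(x)=\{S\in\mathcal F(G_0)\colon x\in\pi(S)\}$ for $x\in\langle G_0\rangle$, and for $g\in G_0$ let $g$ also denote the corresponding prime (length-one sequence) of $\mathcal F(G_0)$. The engine of the whole argument is the elementary equivalence $g\bdot X\in\mathcal B(G_0)\iff g^{-1}\in\pi(X)$: given an ordering of $g\bdot X$ with product $1_G$, cyclically rotate it so that $g$ comes first (a cyclic rotation does not affect whether the product is $1_G$), and conversely prepend $g$ to an ordering of $X$ realizing $g^{-1}$. Hence, for $g,h\in G_0$, the primes $g$ and $h$ are $\mathcal B(G_0)$-equivalent in $\mathcal F(G_0)$ precisely when $R(g^{-1})=R(h^{-1})$. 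Finally, since $\mathcal F(G_0)^{\times}=\{1\}$ we have $\mathcal C^*(\mathcal B(G_0),\mathcal F(G_0))=\mathcal C(\mathcal B(G_0),\mathcal F(G_0))$ and the unit condition $\mathcal B(G_0)^{\times}=\mathcal B(G_0)\cap\mathcal F(G_0)^{\times}$ holds automatically, so ``$\mathcal B(G_0)$ is a C-monoid defined in $\mathcal F(G_0)$'' is just finiteness of the class semigroup $\mathcal C(\mathcal B(G_0),\mathcal F(G_0))$.

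For (1), the plan is to show that $g\mapsto [g]_{\mathcal B(G_0)}^{\mathcal F(G_0)}$ is finite-to-one; together with finiteness of the class semigroup this forces $G_0$ finite. Fix $g\in G_0$. Since $G_0$ is condensed there is $B\in\mathcal B(G_0)$ with $g\in\supp(B)$; writing $B=g\bdot C$ gives $g^{-1}\in\pi(C)$, that is $C\in R(g^{-1})$. Now if $h\in G_0$ lies in the fibre of $[g]$, then $R(h^{-1})=R(g^{-1})\ni C$, whence $h^{-1}\in\pi(C)$. But $C$ is a single fixed sequence, so $\pi(C)$ is a \emph{finite} subset of $\langle G_0\rangle$, and $h\mapsto h^{-1}$ is injective; thus the fibre of $[g]$ is finite. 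As the target is finite, $G_0$ must be finite, contradicting the hypothesis. The observation making this uniform in the torsion and non-torsion cases is exactly that the product set of one fixed sequence is finite.

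For (3), I would build a surjection of $\mathcal C^*(\mathcal B(G_0),\mathcal F(G_0))$ onto the abelianization and read off its finiteness. Put $G=\langle G_0\rangle$. The map $\psi\colon\mathcal F(G_0)\to G/G'$, $S\mapsto gG'$ for any $g\in\pi(S)$, is a well-defined monoid epimorphism: it is well-defined because $\pi(S)$ lies in a single $G'$-coset, multiplicative because cosets commute, and surjective because $[G_0]=G$. If $S,T$ are $\mathcal B(G_0)$-equivalent, choose $W$ with $g^{-1}\in\pi(W)$ for some $g\in\pi(S)$, so that $S\bdot W\in\mathcal B(G_0)$ and hence $T\bdot W\in\mathcal B(G_0)$; applying $\psi$ yields $\psi(T)=\psi(S)$. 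Therefore $\psi$ descends to a surjection $\bar\psi\colon\mathcal C^*(\mathcal B(G_0),\mathcal F(G_0))\twoheadrightarrow \langle G_0\rangle/\langle G_0\rangle'$, and finiteness of the class semigroup gives finiteness of $\langle G_0\rangle/\langle G_0\rangle'$.

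For (2), the plan is to reduce to (1) applied to $G_0=G$ (which is condensed and, here, infinite). One can no longer control the defining factorial monoid directly; instead I invoke the structure theory of C-monoids: if $\mathcal B(G)$ were a C-monoid, then $\widehat{\mathcal B(G)}$ is Krull with finite class group and $\mathcal B(G)$ is a C-monoid defined in a monoid of divisors of $\widehat{\mathcal B(G)}$. When $G'$ (or $G$) is torsion, Lemma \ref{3.6} gives $\widehat{\mathcal B(G)}=\mathcal B(G)^{\ast}$, and by Proposition \ref{3.3}.4 a monoid of divisors of $\mathcal B(G)^{\ast}$ is $\mathcal F(G)$ (the excluded case $|G|=2$ cannot occur, since $G$ is infinite); thus $\mathcal B(G)$ would be a C-monoid defined in $\mathcal F(G)$, contradicting (1). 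The main obstacle of the proposition is precisely this last step in full generality: identifying $\widehat{\mathcal B(G)}$ so that its canonical defining factorial monoid is $\mathcal F(G)$, since for general infinite $G$ the closure $\widehat{\mathcal B(G)}$ may be a proper submonoid of $\mathcal B(G)^{\ast}$ (as the Example preceding Proposition \ref{3.7} shows), and it is here that one must argue beyond the torsion case to still land back in the situation handled by (1).
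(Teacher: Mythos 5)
Your parts (1) and (3) are correct and essentially coincide with the paper's own arguments. For (1) the paper uses exactly your finite-fibre observation: fixing $S$ with $g\bdot S\in\mathcal B(G_0)$, every $h$ in the class of $g$ satisfies $h\bdot S\in\mathcal B(G_0)$, i.e.\ $h^{-1}\in\pi(S)$, and $\pi(S)$ is finite. For (3) the paper obtains the surjection $\mathcal C(\mathcal B(G_0),\mathcal F(G_0))\twoheadrightarrow\mathcal F(G_0)/\mathcal B(G_0)\cong\langle G_0\rangle/\langle G_0\rangle'$ by combining cofinality with \cite[Theorem 2.8.7.1]{Ge-HK06a} and Proposition \ref{3.3}.1; you construct the same epimorphism by hand via $S\mapsto gG'$ for $g\in\pi(S)$, which is a perfectly sound (and self-contained) substitute.

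Part (2), however, has a genuine gap: you only carry out the reduction to (1) when $G'$ is torsion, where Lemma \ref{3.6} gives $\widehat{\mathcal B(G)}=\mathcal B(G)^{\ast}$ and Proposition \ref{3.3}.4 identifies the monoid of divisors with $\mathcal F(G)$, and you explicitly label the remaining case as ``the main obstacle'' without resolving it. That remaining case --- $G'$ contains an element $g$ of infinite order, so that $\widehat{\mathcal B(G)}$ need not be $\mathcal B(G)^{\ast}$ and your identification of the defining factorial monoid breaks down --- is precisely where the paper switches to a different idea: $\mathcal B(\langle g\rangle)$ is a divisor-closed submonoid of $\mathcal B(G)$ (Lemma \ref{3.2}.1), it is a Krull monoid with infinite class group and hence not a C-monoid, while divisor-closed submonoids of C-monoids are C-monoids by \cite[Theorem 2.9.15.1]{Ge-HK06a}; this gives the contradiction. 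Without some such argument your proof of (2) does not cover, for instance, any infinite group with non-torsion commutator subgroup, so as written it is incomplete. (A secondary remark: in the torsion setting the paper argues somewhat differently too --- if $G'$ is finite it uses that $\mathcal C(\widehat{\mathcal B(G)})\cong G/G'$ would be infinite, and if $G'$ is infinite torsion it shows that any $U\in(\mathcal B(G)\colon\widetilde{\mathcal B(G)})$ would need $G'\subseteq\pi(U)$, which is impossible; your alternative route via ``a C-monoid is a C-monoid defined in a monoid of divisors of its complete integral closure'' is reasonable there, but you should pin down the precise reference for that structural fact.)
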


\begin{proof}
1. Suppose $G_0$ is infinite. Let $g\in G_0$ and $S\in \mathcal F(G_0)$ such that $g\bdot S\in \mathcal B(G_0)$. Then for every $h\in G_0$ with $[h]_{\mathcal B(G_0)}^{\mathcal F(G_0)}=[g]_{\mathcal B(G_0)}^{\mathcal F(G_0)}$, we have that $h\bdot S\in \mathcal B(G_0)$, whence $$\left\{h\in G_0\colon [h]_{\mathcal B(G_0)}^{\mathcal F(G_0)}=[g]_{\mathcal B(G_0)}^{\mathcal F(G_0)}\right\}\subseteq \{h\in G_0\colon h^{-1}\in \pi(S)\}\quad \text{ is finite}\,.$$
Therefore $\{[h]_{\mathcal B(G_0)}^{\mathcal F(G_0)}\colon h\in G_0\}$ and hence $\mathcal C(\mathcal B(G_0), \mathcal F(G_0))$ are infinite.

2. Assume to the contrary that $\mathcal B(G)$ is a C-monoid. By \cite[Theorem 2.9.11.2]{Ge-HK06a}, it follows that $\mathcal C(\widehat{\mathcal B(G)})$ is finite and $(\mathcal B(G):\widetilde{\mathcal B(G)})\neq\emptyset$. If $G'$ is finite, then by Proposition \ref{3.3} and Lemma \ref{3.6}, we know $\mathcal C(\widehat{\mathcal B(G)})\cong G/G'$ is infinite, a contradiction. Suppose $G'$ is infinite.

If $G'$ is torsion, then by Proposition \ref{3.3} $\{S\in \mathcal F(G')\colon |S|=1\}\subseteq \mathsf q(\mathcal B(G)^*)=\mathsf q(\mathcal B(G))$ and for all $g\in G'$ there exists $n\in\N$ such that $g^{[n]}\in\mathcal B(G)$, whence $\{S\in \mathcal F(G')\colon |S|=1\}\subseteq \widetilde{\mathcal B(G)}$. Let  $U\in (\mathcal B(G):\widetilde{\mathcal B(G)})$. Then  $U\bdot g\in\mathcal B(G)$ for all $g\in G'$, whence $G'\subseteq \pi(U)$ must be finite, a contradiction.

If $G'$ is not torsion, then there exists an element $g\in G'$ such that  $\ord(g)=\infty$. Then $\mathcal B(\langle g\rangle)\subseteq \mathcal B(G)$ is a divisor closed submonoid, but is a Krull monoid with infinite class group, whence not a C-monoid.  Therefore $\mathcal B(G)$ cannot be a C-monoid, because divisor closed submonoids of C-monoids are C-monoids by \cite[Theorem 2.9.15.1]{Ge-HK06a}.

%and assume to the contrary, that $\mathcal B(G)$ is a C-monoid. Then $\mathcal C^*(\mathcal B(G), \mathcal F(G))$ is finite. Now we define the submonoid $F'=\mathcal F(\{g^i \mid i\in\Z\})\subseteq \mathcal F(G)$. Then $\mathcal B(G)\cap F'=\mathcal B(\{g^i \mid i\in\Z\})=: H'$ and by reducedness of all involved monoids we obtain also $F'^{\times}=F'\cap \mathcal F(G)^{\times}$. Then \cite[Lemma 2.8.4.5]{Ge-HK06a} gives us the finiteness of $\mathcal C^*(H',F')$. But this is a contradiction, since $H'\cong \mathcal B(\Z)$ via $g^{n_1}\bdot\hdots\bdot g^{n_m}\mapsto n_1\bdot\hdots\bdot n_m$ and $\mathcal B(\Z)$ is a Krull monoid with infinite class group, hence cannot be a C-monoid.

3. Since $G_0$ is condensed, $\mathcal B(G_0)\subseteq \mathcal F(G_0)$ is cofinal and by Proposition \ref{3.3} and \cite[Theorem 2.8.7.1]{Ge-HK06a}, there exists an epimorphism $\mathcal C(\mathcal B(G_0),\mathcal F(G_0))\rightarrow \mathcal F(G_0)/\mathcal B(G_0)\cong \langle G_0\rangle /\langle G_0\rangle '$, which completes the proof, since $\mathcal C(\mathcal B(G_0),\mathcal F(G_0))$ is finite by definition, as $\mathcal B(G_0)$ is a reduced C-monoid.
\end{proof}

The next example shows that in the proof of Proposition \ref{3.9}.1 it is necessary to assume that $\mathcal B(G_0)$ is a C-monoid defined in $\mathcal F(G_0)$. Moreover, we can see that the statements "$\mathcal B(G_0)$ is a C-monoid" and "$\mathcal B(G_0)$ is a C-monoid defined in $\mathcal F(G_0)$" are not equivalent.

\smallskip
\begin{example}\label{3.10}
Let $X$ be a set, $G$ be the free group over $X$, and $G_0=\{x,x^{-1}\colon x\in X\}$. Then $\mathcal B(G_0)$ is a C-monoid, but not a C-monoid defined in $\mathcal F(G_0)$.
\end{example}
\begin{proof}
Since the elements of $X$ have no relations, a sequence $S\in\mathcal F(G_0)$ is a product-one sequence if and only if for all $g\in G_0$ we have that $\mathsf v_g(S)=n$ implies $\mathsf v_{g^{-1}}(S)=n$. Therefore the elements of $\mathcal B(G_0)$ are of the form $g_1^{[n_1]}\bdot (g_1^{-1})^{[n_1]}\bdot\hdots\bdot g_r^{[n_r]}\bdot (g_r^{-1})^{[n_r]}$ for $g_i\in X$  and $r, n_i\in\N$. Then $\mathcal A(G_0)=\{g\bdot g^{-1} \colon g\in X\}$ and we obtain the factoriality of $\mathcal B(G_0)$. By definition, every factorial monoid is a C-monoid in itself. To see that $\mathcal B(G_0)$ is not a C-monoid defined in $\mathcal F(G_0)$, just note that for every $x\in X$ we have that  $x, x^{[2]}, x^{[3]},\hdots$ are all in different classes.
\end{proof}

Let $H$ be a monoid and $a\in H$. We set $[\![ a]\!]$ to be the submonoid of $H$ consisting of all the divisors of powers of $a$. $H$ is said to be
\begin{itemize}
\item a \textit{G-monoid} if there exists $a\in H$ such that $H=[\![a]\!]$ is a divisor closed submonoid  generated by $a$ ( or equivalently if $\bigcap_{\mathfrak p\in \text{$s$-}\spec(H)\setminus \{\emptyset\}}\mathfrak p\neq \emptyset$; for a list of equivalent conditions see \cite[Lemma 2.7.7]{Ge-HK06a}),

\item \textit{finitary} if it is a \BF-monoid (see Section 4) and there exist $n, M \in\N$ and $u_1,\hdots ,u_n\in H\setminus H^{\times}$ such that $(H\setminus  H^{\times})^M\subseteq \{u_1,\hdots ,u_n\}H$, where $(H\setminus H^\times)^M=\{a_1\cdot\ldots\cdot a_M\colon a_i\in H\setminus H^\times)\}$. In that case $\{u_1,\hdots ,u_n\}$ is called a \textit{finite almost generating set} of $H$ (for background on finitary monoids we refer to \cite[Chapters 2.7 and 4.4]{Ge-HK06a}).
\end{itemize}
Finitely generated monoids are $v$-noetherian G-monoids, and $v$-noetherian G-monoids are finitary (see \cite[Theorems 2.7.9 and 2.7.13]{Ge-HK06a}), but none of the converse implication holds. The next proposition  characterizes when monoids of product-one sequences are finitary. Theorem \ref{3.11}, in combination with Example \ref{infinitefinitary}, shows that a finitary monoid of product-one sequences need not be a G-monoid.

\smallskip
\begin{proposition}\label{finitary}
Let $G$ be a group and let $G_0\subseteq G$ be a condensed subset. Then $\mathcal B(G_0)$ is finitary if and only if there exist $n\in\N$ and nonempty sequences $A_1,\hdots ,A_n\in\mathcal B(G_0)$ such that for all nonempty sequences $S\in\mathcal B(G_0)$, there exists $i\in[1,n]$ with $\supp(A_i)\subseteq \supp(S)$. In particular, if $G_0$ is finite, then $\mathcal B(G_0)$ is finitary.
\end{proposition}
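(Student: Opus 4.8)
The plan is to work throughout with the reduced monoid $H=\mathcal B(G_0)$ and to reduce the finitary property to a divisibility statement. First I would record that $H$ is automatically a BF-monoid: since the length $|\cdot|$ bounds the number of nontrivial factors in any factorization, $H$ is atomic and all factorization lengths are finite. As $H$ is reduced, $H\setminus H^\times$ is exactly the set of nonempty sequences, so being finitary means precisely: there exist $M\in\N$ and finitely many nonempty $u_1,\dots,u_m\in\mathcal B(G_0)$ such that every product $B_1\bdot\dots\bdot B_M$ of $M$ nonempty product-one sequences is divisible in $\mathcal B(G_0)$ by some $u_j$. I would prove both directions against this reformulation.

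For the forward implication, assume $H$ is finitary with witnesses $u_1,\dots,u_m$ and $M$, and simply set $A_j=u_j$. Given any nonempty $S\in\mathcal B(G_0)$, the power $S^{[M]}=S\bdot\dots\bdot S$ is a product of $M$ nonempty elements, so $S^{[M]}\in\{u_1,\dots,u_m\}\mathcal B(G_0)$, i.e. some $u_j$ divides $S^{[M]}$ in $\mathcal B(G_0)$. In particular $u_j\mid S^{[M]}$ in $\mathcal F(G_0)$, whence $\supp(A_j)=\supp(u_j)\subseteq\supp(S^{[M]})=\supp(S)$. This yields the covering property with no further work.

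The converse is the heart of the matter, and the main obstacle is that in the non-abelian setting $\mathcal B(G_0)$ is \emph{not} saturated in $\mathcal F(G_0)$: the containment $\supp(A_i)\subseteq\supp(S)$ does not force $A_i\mid_{\mathcal B(G_0)}S$, since the quotient sequence need not be product-one (as the free group example preceding this proposition shows). Thus one cannot take the $A_i$ themselves as the almost generating set. The tool I would build on is the elementary \emph{block-removal} principle: if $S=B_1\bdot\dots\bdot B_k$ with each $B_\ell$ a nonempty product-one sequence, then $\prod_{\ell\in J}B_\ell\mid_{\mathcal B(G_0)}S$ for every $J\subseteq[1,k]$, because deleting whole product-one blocks from a product-one ordering leaves a product-one ordering. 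Passing to factorizations into atoms, the goal becomes: from a product of sufficiently many atoms, extract a removable subproduct lying in a \emph{fixed finite} set $E$, and then take $M$ one larger than the length bound forced on the elements that avoid $E$. I would construct $E$ from the finitely many minimal supports $\supp(A_1),\dots,\supp(A_n)$; the crux—the step I expect to be hardest—is to guarantee that $E$ can be chosen finite even when $\mathcal B(G_0)$ has infinitely many atoms (so $E$ cannot be the set of all atoms). This is exactly where the hypothesis of \emph{finitely many} minimal supports must be leveraged: it restricts which product-one blocks can occur, and a Dickson-type/pigeonhole argument on the occurrences of the $\supp(A_i)$ should confine the extractable removable blocks to a finite set, separating the regime of high multiplicity on a minimal support from that of bounded multiplicity.

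Finally, for the \emph{in particular}: if $G_0$ is finite then the supports of nonempty product-one sequences form a finite family of subsets of $G_0$, so its minimal members $X_1,\dots,X_n$ are finite in number. Choosing $A_i\in\mathcal B(G_0)$ with $\supp(A_i)=X_i$ gives the covering property, and hence $\mathcal B(G_0)$ is finitary by the equivalence established above.
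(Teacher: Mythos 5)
Your forward direction and the ``in particular'' statement are correct and coincide with the paper's argument. The converse, however, has a genuine gap: you explicitly defer the key step (``the crux---the step I expect to be hardest---is to guarantee that $E$ can be chosen finite \dots a Dickson-type/pigeonhole argument \dots should confine the extractable removable blocks to a finite set''), and the route you sketch does not obviously close. Your block-removal principle only lets you delete whole product-one subproducts, and the individual blocks $B_\ell$ range over an a priori infinite set, so it is unclear how any finite set $E$ of removable subproducts could be extracted this way; you never say what $E$ is or why membership in $E$ can be forced.

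The missing idea is that the $A_i$ themselves \emph{do} serve as the almost generating set --- your stated reason for rejecting them (non-saturation of $\mathcal B(G_0)$ in $\mathcal F(G_0)$) only rules out dividing a \emph{single} $S$, not a long product. Set $m=\max\{|A_i|\colon i\in[1,n]\}$ and $M=n(m-1)+1$. Given nonempty $S_1,\ldots,S_M\in\mathcal B(G_0)$, pigeonhole on the index $i$ with $\supp(A_i)\subseteq\supp(S_j)$ yields, say, $\supp(A_1)\subseteq\supp(S_j)$ for all $j$ in some $I\subseteq[1,M]$ with $|I|=m$; after renumbering, $I=[1,m]$. Write $A_1=g_1\bdot\ldots\bdot g_{\ell}$ with $\ell\le m$ and remove \emph{one letter per factor}: $T_j=S_j\bdot g_j^{[-1]}$ satisfies $g_j^{-1}\in\pi(T_j)$, since deleting one letter from a product-one sequence leaves its inverse in the product set. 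Because $1\in\pi(A_1)$ forces $1\in\pi(g_1^{-1}\bdot\ldots\bdot g_{\ell}^{-1})$, one gets $1\in\pi(T_1\bdot\ldots\bdot T_{\ell})\subseteq\pi(S_1\bdot\ldots\bdot S_M\bdot A_1^{[-1]})$, i.e.\ $A_1\mid_{\mathcal B(G_0)}S_1\bdot\ldots\bdot S_M$. This distribution of the letters of $A_1$ across $\ell$ distinct factors, each of which is known to contain all of $\supp(A_1)$ in its support, is exactly what replaces the finite set $E$ you were hoping to construct; without it (or an equally concrete substitute) your converse is not a proof.
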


\begin{proof}
Suppose $\mathcal B(G_0)$ is finitary. Then there exist $n\in \N$, a finite almost generating set $A_1,\hdots ,A_n$, and $m\in\N$ such that for all nonempty sequences $S\in\mathcal B(G_0)$ there exists $i\in[1,n]$ with $A_i\mid_{\mathcal B(G_0)} S^{[m]}$, whence $\supp(A_i)\subseteq\supp(S^{[m]})=\supp(S)$.

On the other hand, suppose there are $n\in \N$ and $A_1,\ldots, A_n\in \mathcal B(G_0)$ such that for every nonempty sequence $S\in \mathcal B(G_0)$, there exists $i\in [1,n]$ such that $\supp(A_i)\subseteq \supp(S)$. Since $\mathcal B(G_0)$ is a \BF-monoid, it suffices to show there exists $M\in \N$ such that $(\mathcal B(G_0)\setminus \{1_{\mathcal B(G_0)}\})^M\subseteq \{A_1,\hdots , A_n\}\bdot\mathcal B(G_0)$.

Set $m=\max\{|A_i|\colon i\in [1,n]\}$  and $M=n(m-1)+1$. Let  $S_1,\hdots , S_{M}\in\mathcal B(G_0)\setminus \{1_{\mathcal B(G_0)}\}$. Then there exists $i\in [1,n]$, say $i=1$, such that there exists a subset $I\subseteq [1,M]$ with $|I|=m$ and $\supp(A_1)\subseteq \supp(S_j)$ for all $j\in I$. After renumbering if necessary, we may assume that $I=[1,m]$. Suppose $A_1=g_1\bdot\ldots\bdot g_{\ell}$, where $\ell\le m$ and $g_1,\ldots,g_{\ell}\in G_0$, and let $T_i=S_i\bdot g_i^{[-1]}$ for all $i\in [1,\ell]$. Therefore $g_i^{-1}\in \pi(T_i)$ for all $i\in [1,\ell]$. It follows by $1\in \pi(A_1)$ that $$1\in \pi(g_1^{-1}\bdot\ldots\bdot g_{\ell}^{-1})\subseteq \pi(T_1\bdot\ldots\bdot T_{\ell})\subseteq \pi(S_1\bdot\ldots\bdot S_{\ell}\bdot A_1^{[-1]})\subseteq \pi(S_1\bdot \ldots\bdot S_M\bdot A_1^{[-1]})\,,$$
whence $S_1\bdot \ldots\bdot S_M\in A_1\bdot \mathcal B(G_0)$.

For the "in particular" statement, we suppose $G_0$ is finite.
Let $E\subseteq \mathcal B(G_0)$ be a maximal subset such that for any two distinct $S_1,S_2\in E$, we have that $\supp(S_1)\neq \supp(S_2)$. It follows by the fact that $G_0$ has only finitely many subsets that $E$ is finite, whence the assertion follows.
\end{proof}

\smallskip
\begin{theorem} \label{3.11}
Let $G$ be a group and let $G_0\subseteq G$ be a condensed subset.
\begin{enumerate}
\item The following statements are equivalent.
    \begin{enumerate}
    \item $\mathcal B (G_0)$ is a G-monoid.
    \item $G_0$ is finite.
    \item $s$-$\spec ( \mathcal B (G_0))$ is finite.
    \item $\mathcal B(G_0)^{\ast}$ is a finitely generated Krull monoid.
    \end{enumerate}

\item The following statements are equivalent.
	\begin{enumerate}
	\item $\mathcal{B}(G_0)$ is finitely generated.
	\item $\mathcal B(G_0)$ is a G-monoid and $\mathsf D(G_0)<\infty$.
	\item $G_0$ is finite and $\mathsf D(G_0)<\infty$.
	\end{enumerate}
	\smallskip
	\noindent
	If, in addition, $G_0$ consists of torsion elements, then the following conditions are also equivalent to the conditions $2(a)-2(c)$ listed above.
	\smallskip
	
	\begin{enumerate}
	\item[(d)] $\mathcal{B}(G_0)$ is a C-monoid  defined in $\mathcal F(G_0)$ and $G_0$ is finite.
	\item[(e)] $\mathcal B(G_0)$ is a C-monoid defined in $\mathcal F(G_0)$.
	\item[(f)] $\mathcal B(G_0)$ is a C-monoid and $G_0$ is finite.
	\item[(g)] $\mathcal{B}(G_0)$ is v-noetherian and $G_0$ is finite.
	\item[(h)] $G_0$ is finite.
	\end{enumerate}
\end{enumerate}
\end{theorem}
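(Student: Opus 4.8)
The plan is to first reduce to the case $\langle G_0\rangle=G$, since passing from $G$ to $\langle G_0\rangle$ changes neither $\mathcal B(G_0)$ nor any condition in the statement, so that Propositions \ref{3.3}, \ref{3.8}, \ref{3.9} and Lemma \ref{3.6} all apply, and then to feed the equivalences of Part 1 into Part 2. For Part 1 I would work entirely with Proposition \ref{3.8}. Combining the $\G$-monoid criterion $\bigcap_{\emptyset\neq\mathfrak p\in s\text{-}\spec}\mathfrak p\neq\emptyset$ with $s\text{-}\spec(\mathcal B(G_0))=\{\mathfrak p_X\}$ and the fact that $\mathfrak p_{\{g\}}\subseteq\mathfrak p_X$ whenever $g\in X$, one sees that the intersection over all nonempty primes equals $\bigcap_{g\in G_0}\mathfrak p_g=\{S\in\mathcal B(G_0)\colon\supp(S)=G_0\}$. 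Thus (a) holds iff there is a product-one sequence of full support; as supports are finite and, conversely, condensedness lets me concatenate witnesses for each of finitely many elements, this is exactly (b). For (b)$\Leftrightarrow$(c): finiteness of $G_0$ leaves only finitely many $\mathfrak p_X$, giving (c), while for the converse I note that each fibre $\{h\in G_0\colon\mathfrak p_h=\mathfrak p_g\}$ is finite (a product-one sequence witnessing $g\in\supp$ has finite support yet must contain every $h$ in that fibre), so an infinite $G_0$ produces infinitely many distinct $\mathfrak p_g$ and hence an infinite $s$-spectrum. Finally (b)$\Leftrightarrow$(d): for finite $G_0$ the monoid $\mathcal B(G_0)^\ast$ is a saturated (Proposition \ref{3.3}.2) submonoid of $\mathcal F(G_0)\cong\N_0^{|G_0|}$, hence a finitely generated Krull monoid by Gordan's lemma, and conversely finitely many generators have finite total support equal to $G_0$.

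Part 2 is then mostly formal. From (a) I obtain a $\G$-monoid (finitely generated monoids are $v$-noetherian $\G$-monoids) with finitely many atoms, so $\mathsf D(G_0)<\infty$, giving (b); (b)$\Rightarrow$(c) is Part 1; and for (c)$\Rightarrow$(a) there are only finitely many sequences over the finite set $G_0$ of length at most $\mathsf D(G_0)$, whence $\mathcal A(G_0)$ is finite, which forces finite generation since $\mathcal B(G_0)$ is a reduced atomic $\BF$-monoid. Under the torsion hypothesis, Lemma \ref{3.6} identifies $\widetilde{\mathcal B(G_0)}=\mathcal B(G_0)^\ast$ as the relevant Krull overmonoid, finitely generated when $G_0$ is finite by Part 1(d). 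The remaining conditions I would arrange in the cycle (a)$\Rightarrow$(e)$\Rightarrow$(d)$\Rightarrow$(f)$\Rightarrow$(h)$\Rightarrow$(a) together with (a)$\Rightarrow$(g)$\Rightarrow$(h): here (a)$\Rightarrow$(e) uses that a finitely generated monoid is a C-monoid defined in the free monoid $\mathcal F(G_0)$ in which it sits, (e)$\Rightarrow$(d) uses Proposition \ref{3.9}.1 (a C-monoid defined in $\mathcal F(G_0)$ forces $G_0$ finite), and the passages to (f), (g), (h) are immediate from the definitions (C-monoids being $v$-noetherian).

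The whole scheme closes once I establish the single implication (h)$\Rightarrow$(a): a finite set $G_0$ of torsion elements yields a finitely generated $\mathcal B(G_0)$, equivalently $\mathsf D(G_0)<\infty$. This is the main obstacle, and the difficulty is real: $\langle G_0\rangle$ may be an infinite torsion group (e.g.\ two reflections generating $D_\infty$), the product-one witness for an atom need not respect any factorization in $\mathcal B(G_0)^\ast$, and $\mathcal B(G_0)$ is generally not saturated in $\mathcal F(G_0)$, so one cannot simply split off a product-one subsequence. My approach is to exploit that $\widetilde{\mathcal B(G_0)}=\mathcal B(G_0)^\ast$ is finitely generated with finite class group $\cong G/G'$ (a finitely generated torsion abelian group), and to bound the length of an atom $A\in\mathcal A(G_0)$ through its factorization $A=V_1\bdot\cdots\bdot V_r$ into the finitely many atoms of $\mathcal B(G_0)^\ast$: tracking the partial products modulo $G'$ in the finite group $G/G'$, together with a Dickson/pigeonhole input from the finite generation of the root closure, should force a decomposition of $A$ into two nontrivial product-one subsequences unless $r$ is bounded. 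The technical heart is to make this extraction actually land inside $\mathcal B(G_0)$ rather than merely in $\mathcal B(G_0)^\ast$ — equivalently, to show that the conductor $(\mathcal B(G_0)\colon\widetilde{\mathcal B(G_0)})$ is nonempty in the torsion case — which is precisely where the non-saturatedness must be controlled.
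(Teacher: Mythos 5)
Most of your outline matches the paper's proof: Part 1 via Proposition \ref{3.8} and Dickson's lemma, the cycle in Part 2, and the use of Proposition \ref{3.9} for (e)$\Rightarrow$(d) are all essentially what the paper does. But the one implication you yourself identify as "the main obstacle" --- (h)$\Rightarrow$(a), i.e.\ that a finite set $G_0$ of torsion elements has $\mathsf D(G_0)<\infty$ --- is left as a sketch, and the strategy you propose for it cannot work on the inputs you list. You want to deduce boundedness of atoms from the facts that $\widetilde{\mathcal B(G_0)}=\mathcal B(G_0)^{\ast}$ is a finitely generated Krull monoid with finite class group and that the conductor $(\mathcal B(G_0)\colon\widetilde{\mathcal B(G_0)})$ is nonempty. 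All of these structural properties also hold for $G_0=\{\alpha,\tau\}$ in the infinite dihedral group (Theorem \ref{5.1}.1 and Example \ref{taualpha}), where nevertheless $\mathsf D(G_0)=\infty$ and $\mathcal B(G_0)$ is not finitely generated. So no argument that only invokes finite generation of the root closure, finiteness of $G/G'$, and a nonempty conductor can close this implication; the torsion hypothesis has to enter combinatorially, not just through these invariants. Tracking partial products modulo $G'$ runs into exactly the obstruction you name --- a subsequence whose product lies in $G'$ need not be completable to product one inside $\mathcal B(G_0)$ --- and you offer no mechanism to overcome it.

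The paper's actual argument is more elementary and uses torsion directly: let $E$ be the (finite) set of sequences $T$ with $\mathsf v_g(T)<\ord(g)$ for all $g\in G_0$, and for each atom $A$ reduce each multiplicity modulo $\ord(g)$ to write $A=T\bdot U$ with $T\in E$ and $U\in\mathcal F(P)$, where $P=\{g^{[\ord(g)]}\colon g\in G_0\}$. The decisive point is that every element of $P$ is itself a product-one sequence, so if $U\mid_{\mathcal F(P)}V$ for two such reductions of atoms $U\bdot T$ and $V\bdot T$, then $U\bdot T$ divides $V\bdot T$ \emph{in} $\mathcal B(G_0)$, forcing $U=V$; Dickson's lemma then bounds each fibre $W_T$. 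This sidesteps the non-saturatedness issue entirely. Separately, your justification of (a)$\Rightarrow$(e) --- that a finitely generated monoid is automatically a C-monoid defined in the ambient free monoid --- is false in general (Remark \ref{3.12}: $\mathcal B(\{1,-1,2,-2\})$ over $\Z$ is finitely generated but not a C-monoid); the correct argument needs the torsion hypothesis, namely that $S^{[\alpha]}\in\mathcal B(G_0)$ for all $S\in\mathcal F(G_0)$ with $\alpha=\lcm\{\ord(g)\colon g\in G_0\}$, and then cites \cite[Proposition 2.6.3]{Cz-Do-Ge16a}.
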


\begin{proof}
1. (a) $\Rightarrow$ (b) If $\mathcal B(G_0)$ is a G-monoid, then there exists $S\in\mathcal B(G_0)$ such that $\mathcal B(G_0)=\llbracket S\rrbracket=\mathcal B(\supp(S))$. Since $G_0$ is condensed, it follows that $G_0=\supp(S)$ is finite.

(b) $\Rightarrow$ (c) If $G_0$ is finite, then there are only finitely many subsets of $G_0$ and the assertions follows by Proposition \ref{3.8}.

(c) $\Rightarrow$ (a)  follows from \cite[Lemma 2.7.7]{Ge-HK06a}.

(b) $\Rightarrow$ (d) Let $G_0$ be finite. We say $S_1\le S_2$ if $S_1 \t_{\mathcal F(G_0)} S_2$, where $S_1,S_2\in \mathcal F(G_0)$. Then  Dickson's Lemma \cite[Theorem 1.5.3]{Ge-HK06a} implies $\mathcal B(G_0)^{\ast}$ has  finitely many minimal elements, say $A_1,\hdots , A_n$, where $n\in \N$. By Proposition \ref{3.3}, it folllows that $\mathcal B(G_0)^*$ is a Krull monoid and hence it suffices to show $\mathcal A(\mathcal B(G_0)^*)\subseteq \{A_i\colon i\in [1,n]\}$. Let $A\in \mathcal A(\mathcal B(G_0)^*)$. Then there exists $i\in [1,n]$ such that $A_i\t_{\mathcal F(G_0)} A$. Since $\mathcal B(G_0)^{\ast}$ is a saturated submonoid of $\mathcal F(G_0)$ by Proposition \ref{3.3}, we obtain $A\bdot A_i^{[-1]}\in \mathcal B(G_0)^*$ and hence $A=A_i$.

(d) $\Rightarrow$ (b) Suppose $\mathcal B(G_0)^{\ast}$ is finitely generated and suppose  $\mathcal A(\mathcal B(G_0)^*)=\{A_1,\hdots , A_n\}$, where $n\in \N$. Since $G_0$ is condensed, we obtain $G_0=\bigcup_{i\in[1,n]} \supp(A_i)$ is finite.

\medskip
2. $(b)\Rightarrow (c)$ follows from 1.

$(a)\Rightarrow (b)$ Every finitely generated monoid is a G-monoid by \cite[Theorem 2.7.13]{Ge-HK06a}. Moreover, if $\mathcal B(G_0)$ is finitely generated, then $\mathcal A(G_0)$ is finite, whence $\mathsf D(G_0)<\infty$.

$(c)\Rightarrow (a)$  Suppose $G_0$ and $\mathsf D(G_0)$ are both finite. Then the set $\{S\in \mathcal F(G_0)\colon |S|\le \mathsf D(G_0)\}$ is finite  and the assertion follows by the fact that $\mathcal A(G_0)\subseteq \{S\in \mathcal F(G_0)\colon |S|\le \mathsf D(G_0)\}$.

\smallskip
Now suppose $G_0$ consists of torsion elements. Then $(d)\Rightarrow (f)$ and $(g)\Rightarrow (h)$ follow by definition, $(f)\Rightarrow (g)$ is just \cite[Theorem 2.9.13]{Ge-HK06a} and $(e)\Rightarrow (d)$ follows by Proposition \ref{3.9}. It suffices to show $(h)\Rightarrow (a)\Rightarrow (e)$.

$(h)\Rightarrow (a)$  Let $G_0$ be finite and let $$E=\{S\in\mathcal F(G_0) \colon \mathsf v_g(S)<\ord(g) \text{ for all } g\in G_0 \}\,.$$ Then $E$ is finite and for every $A\in\mathcal A(G_0)$ there exists precisely one $T\in E$ such that $\mathsf v_g(A)\equiv \mathsf v_g(T) \ (\ord(g))$ for all $g\in G_0$, whence $A\bdot T^{[-1]}\in\mathcal F(P)$, where $P=\{g^{[\ord(g)]}\colon g\in G_0 \}$ is finite.

For every $T\in E$, we set
\begin{equation*}
W_T=\{A\bdot T^{[-1]}\colon A\in\mathcal A(G_0) \text{ and } \mathsf v_g(A)\equiv\mathsf v_g(T) \ (\ord(g)) \text{ for all } g\in G_0 \}\subseteq \mathcal  F(P).
\end{equation*}
To show $\mathcal A(G_0)$ is finite, it is sufficient to show $W_T$ is finite for all $T\in E$. Let $T\in E$. If there exist $U,V\in W_T$ such that $U\t_{\mathcal F(P)} V$, then $U\bdot T$ divides $V\bdot T$ in $\mathcal B(G_0)$ which implies $U=V$. We say $U\le V$ if $U\t_{\mathcal F(P)} V$ for $U,V\in \mathcal F(G_0)$.
 Thus every element of $W_T$ can be viewed as a minimal element of $W_T$.
 Note that $\mathcal F(P)\cong \N_0^{\t G_0\t}$. It follows by Dickson's Lemma \cite[Theorem 1.5.3]{Ge-HK06a} that $W_T$ is finite.

$(a)\Rightarrow (e)$ Suppose $\mathcal B(G_0)$ is finitely generated. Then $G_0$ is finite, as we already proved $2. (a)$ being equivalent to $2. (c)$. Note that $G_0$ consists of torsion elements.
Let $\alpha =\lcm\{\ord(g)\colon g\in G_0\}$. Then for all $S\in \mathcal F(G_0)$, we have that $S^{[\alpha]}\in \mathcal B(G_0)$. It follows by \cite[Proposition 2.6.3]{Cz-Do-Ge16a} that $\mathcal B(G_0)$ is a C-monoid defined in $\mathcal F(G_0)$.
\end{proof}

\smallskip
\begin{remark}\label{3.12}
Neither Proposition \ref{3.7} nor Theorem \ref{3.11}.2 hold true in the non-torsion case, since for both, Example \ref{taualpha} is a counterexample with just one non-torsion element. Also we cannot conclude that finite $G_0$ implies $\mathcal B(G_0)$ is finitely generated in the non-torsion case, as this example shows. To see that $G_0$ finite need not imply $\mathcal B(G_0)$ is a C-monoid, we take a look at $G=\Z$ and $G_0=\{1,-1,2,-2\}$. Then $\mathcal B(G_0)$ is a finitely generated Krull monoid with infinite class group, hence not a C-monoid.
\end{remark}

Before stating the next theorem, we need the following elementary lemma for avoiding too many calculations. We will use it without further mention. 

\smallskip
\begin{lemma}\label{3.13}
Let $G$ be a group such that the commutator subgroup $G'$ is an elementary 2-group and $G'\subseteq \mathsf Z(G)$ is a subgroup of the center $\mathsf Z(G)=\{g\colon gh=hg\text{ for all }h\in G\}$ and let $g,f,h\in G$.
\begin{enumerate}
\item $[g,h]=[h^{-1},g]=[h,g]$.
\item $[fh,g]=[f,g][h,g]$.
\item $[fh,gf]=[f,h][f,g][h,g]$.
\end{enumerate}
\end{lemma}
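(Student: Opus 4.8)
The plan is to verify each of the three commutator identities by direct computation, exploiting the three standing hypotheses: $G'$ is an elementary $2$-group (so every commutator satisfies $[x,y]^2 = 1$, i.e. $[x,y] = [x,y]^{-1}$), $G'$ is central in $G$, and therefore all commutators commute with everything in $G$. The centrality is the crucial lever, since it lets me move commutator factors past arbitrary group elements without changing anything, and the elementary $2$-group condition lets me freely invert commutators and reorder products of them. I expect the whole lemma to be routine once these two simplifications are in hand; the only mild subtlety is bookkeeping in part (3).

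First I would prove (1). Starting from the definition $[g,h] = g^{-1}h^{-1}gh$, I note that $[g,h]^{-1} = h^{-1}g^{-1}hg = [h,g]$, so the elementary $2$-group hypothesis ($[g,h]^{-1} = [g,h]$) immediately gives $[g,h] = [h,g]$. For the remaining equality $[h^{-1},g] = [h,g]$, I would compute $[h^{-1},g] = hg^{-1}h^{-1}g$ directly and compare with $[h,g] = h^{-1}g^{-1}hg$; the standard identity $[h^{-1},g] = h\,[g,h]\,h^{-1}$ (valid in any group) together with centrality of $[g,h]$ collapses the conjugation, yielding $[h^{-1},g] = [g,h] = [h,g]$. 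So part (1) is essentially a one-line consequence of the two hypotheses.

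Next I would establish the bilinearity statement (2), namely $[fh,g] = [f,g][h,g]$. The general group-theoretic identity here is $[fh,g] = [f,g]^{\,h}[h,g]$, where $x^{h} = h^{-1}xh$ denotes conjugation. Since $[f,g] \in G' \subseteq \mathsf{Z}(G)$, the conjugation $[f,g]^{h}$ equals $[f,g]$, and the identity reduces to exactly $[fh,g] = [f,g][h,g]$. I would either cite this commutator identity and invoke centrality, or simply multiply out $[fh,g] = (fh)^{-1}g^{-1}(fh)g = h^{-1}f^{-1}g^{-1}fhg$ and insert $g^{-1}g$ to manufacture the two commutators, using centrality to slide the inner commutator to the front.

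Finally, for (3) I would apply (2) twice, treating $gf$ as a single element in the second slot: writing $[fh,gf]$ and expanding in the first argument via (2) gives $[fh,gf] = [f,gf][h,gf]$. I would then need to expand each factor in its second argument. Here the main obstacle is that (2) as stated expands only the first entry, so I must either prove the analogous right-hand bilinearity $[x,ab] = [x,a][x,b]$ (which follows identically, again using centrality to kill the conjugation term) or reduce everything to left-expansions using part (1). Using right-bilinearity, $[f,gf] = [f,g][f,f] = [f,g]$ since $[f,f]=1$, and $[h,gf] = [h,g][h,f]$. Combining yields $[fh,gf] = [f,g][h,g][h,f]$; then (1) rewrites $[h,f] = [f,h]$, and the elementary $2$-group property lets me reorder the central factors freely to land on the claimed form $[f,h][f,g][h,g]$. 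The only real care needed throughout is tracking which conjugations vanish by centrality and which commutator inversions are permitted by the order-two hypothesis.
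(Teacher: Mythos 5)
Your proposal is correct and follows essentially the same route as the paper: part (1) via the order-two property plus a centrality-collapsed conjugation, part (2) via the standard identity $[fh,g]=[f,g]^{h}[h,g]$ with the conjugation killed by $G'\subseteq \mathsf Z(G)$, and part (3) by iterating (1) and (2) and using that $G'$ is abelian. No gaps.
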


\begin{proof}
1. Since every non-trivial element of $G'$ has order 2, we obtain $[g,h]=[g,h]^{-1}=[h,g]$. On the other hand we have that $[g,h]=g^{-1}h^{-1}gh=hh^{-1}(g^{-1}h^{-1}gh)=h(g^{-1}h^{-1}gh)h^{-1}=hg^{-1}h^{-1}g=[h^{-1},g]$.

2. $[fh,g]=h^{-1}f^{-1}g^{-1}fhg=h^{-1}(f^{-1}g^{-1}fg)g^{-1}f^{-1}fhg=[f,g][h,g]$.

3. Just apply 1. and 2. repeatedly and use the fact that $G'$ is abelian.
\end{proof}

On the one hand, the next theorem generalizes the fact that $\mathcal B(G)$ is Krull if and only if $G$ is abelian to arbitrary groups. On the other hand, the additional equivalent statements that are formulated show us reasons why $\mathcal B(G)$ fails to be Krull for non-abelian $G$. Since one of these is the root closedness of $\mathcal B(G)$, one could ask if it also lacks seminormality and except for a rare case where $|G'|=2$, non-seminormality holds true.\\
Since the notion of \textit{transfer Krull monoid} appears in the next theorem, but is more recent, we want to recall its definition. A monoid homomorphism $\theta: H\to D$ is called a \textit{transfer homomorphism} if it has the following two properties:
\begin{enumerate}
\item[(T1)] $D=\theta(H)D^\times$ and $\theta^{-1}(D^\times)=H^\times$.
\item[(T2)] If $u\in H$, $b,c\in D$ and $\theta(u)=bc$, then there exist $v,w\in H$ such that $u=vw$, $\theta(v)\simeq b$ and $\theta(w)\simeq c$.
\end{enumerate}
Now a monoid $H$ is said to be \textit{transfer Krull} if there exists a transfer homomorphism from $H$ into a Krull monoid.

\smallskip
\begin{theorem}\label{3.14}
Let $G$ be a  group.
\begin{enumerate}
\item The following are equivalent.
	\begin{itemize}
		\item[(a)] $G$ is abelian.
		
		\item[(b)] $\mathcal B(G)$ is Krull.
		
		\item[(c)] $\mathcal B(G)$ is completely integrally closed.
		
		\item[(d)] $\mathcal B(G)$ is root closed.
			
		\item[(e)] $\mathcal B(G)$ is weakly Krull.
			
		\item[(f)] $\mathcal B(G)$ is a transfer Krull monoid.
		
		\item[(g)] $\mathcal B(G) \subseteq \mathcal F(G)$ is saturated.
	\end{itemize}
\item $\mathcal B(G)$ is seminormal if and only if $|G'|\le 2$.

\item $\mathcal B (G)$ is a C-monoid if and only if $\mathcal B (G)$ is a G-monoid if and only if $G$ is finite.
\end{enumerate}
\end{theorem}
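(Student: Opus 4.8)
The plan for part (1) is to run the cheap forward implications and then close the loop by refuting each of the remaining properties for non-abelian $G$. If $G$ is abelian then $G'=\{1\}$, so $\mathcal B(G)=\mathcal B(G)^{\ast}=\mathsf q(\mathcal B(G))\cap\mathcal F(G)$ is saturated in $\mathcal F(G)$; this gives (a)$\Rightarrow$(g), and conversely a saturated submonoid of the factorial monoid $\mathcal F(G)$ is Krull, so (g)$\Rightarrow$(b). Then (b)$\Rightarrow$(c)$\Rightarrow$(d) are immediate from the definitions and $\widetilde{H}\subseteq\widehat{H}$, while (b)$\Rightarrow$(e) and (b)$\Rightarrow$(f) hold since every Krull monoid is weakly Krull and is transfer Krull via the identity. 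For the reverse direction (d)$\Rightarrow$(a) I would note that a root closed monoid is seminormal (as $H\subseteq H'\subseteq\widetilde H=H$ forces $H'=H$), so by part (2) root closedness already forces $|G'|\le 2$; but if $|G'|=2$ the nontrivial $c\in G'$ is central of order two, and the length-one sequence $c$ lies in $\widetilde{\mathcal B(G)}\setminus\mathcal B(G)$ because $\pi(c)=\{c\}\subseteq G'$ while $c^{[2]}\in\mathcal B(G)$, contradicting root closedness. Hence $|G'|=1$ and $G$ is abelian.

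For (e)$\Rightarrow$(a) I would use Proposition \ref{3.8}: for $G_0=G$ one has $\mathfrak X(\mathcal B(G))=\{\mathfrak p_g\colon g\in G\}$ and $\bigcap_{\mathfrak p}\mathcal B(G)_{\mathfrak p}\subseteq\mathcal F(G)$. The key step is $\mathcal B(G)^{\ast}\subseteq\bigcap_{\mathfrak p\in\mathfrak X}\mathcal B(G)_{\mathfrak p}$: given $S\in\mathcal B(G)^{\ast}$ and $g\in G$, pick $s\in\pi(S)\subseteq G'$ and, using Lemma \ref{3.1} together with $\langle G\setminus\{g\}\rangle=G$, a product-one sequence $T$ with $s^{-1}\in\pi(T)$ and $g\notin\supp(T)$, so that $S=(S\bdot T)/T\in\mathcal B(G)_{\mathfrak p_g}$. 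Since $\mathcal B(G)\subsetneq\mathcal B(G)^{\ast}$ for non-abelian $G$, this shows $\mathcal B(G)$ is not weakly Krull. For (f)$\Rightarrow$(a) I would invoke that a transfer homomorphism preserves the system of sets of lengths and that the sets of lengths of a Krull monoid are tightly structured; producing, for non-abelian $G$, a length configuration in $\mathcal B(G)$ that cannot occur in any Krull monoid gives the contradiction.

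Part (2) is the heart of the matter. For $|G'|\le 2$ the monoid is seminormal: if $G'=\{1\}$ it is Krull, and if $G'=\{1,c\}$ then $c$ is central of order two, so for $x\in\mathsf q(\mathcal B(G))$ with $x^{2},x^{3}\in\mathcal B(G)$ one has $\pi(x)\subseteq\{1,c\}$; if $1\notin\pi(x)$ then $\pi(x)=\{c\}$ forces all terms of $x$ to commute pairwise (a non-commuting pair would place both $1$ and $c$ in $\pi(x)$), whence $\pi(x^{[3]})=\{c^{3}\}=\{c\}\not\ni 1$, contradicting $x^{3}\in\mathcal B(G)$; thus $1\in\pi(x)$ and $x\in\mathcal B(G)$. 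The converse $|G'|\ge 3\Rightarrow$ non-seminormal requires a sequence $x$ with $1\notin\pi(x)$ but $1\in\pi(x^{[2]})$ and $1\in\pi(x^{[3]})$ (i.e.\ $x\in H'\setminus H$). The model case is a central commutator $c=[g,h]$ of infinite order in a Heisenberg-like subgroup: products of trivial-abelianization sequences are central twists $c^{T(\sigma)}$, the achievable twists form an interval whose width grows quadratically in the number of copies while the base shift grows only linearly, so for suitable $x$ (for instance $x=g\bdot h\bdot g^{-1}\bdot h^{-1}\bdot c^{[3]}$) one gets $1\notin\pi(x)$ yet $1\in\pi(x^{[k]})$ for all $k\ge 2$. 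The obstacle is to make this universal: one must reduce an arbitrary non-abelian $G$ with $|G'|\ge 3$ to such a computable situation and, crucially, guarantee that the product-one orderings of $x^{[2]},x^{[3]}$ hold in $G$ itself rather than only in a quotient. I expect the argument to split by the structure of $G'$ (an element of infinite order, one of finite order $\ge 3$, and the elementary abelian $2$-group case, where the commutator identities of Lemma \ref{3.13} handle the bookkeeping).

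Finally, part (3) assembles from the earlier results. By Theorem \ref{3.11}.1, $\mathcal B(G)$ is a G-monoid if and only if $G_0=G$ is finite, which settles "G-monoid $\Leftrightarrow$ $G$ finite". If $G$ is finite it consists of torsion elements with $\mathsf D(G)<\infty$, so Theorem \ref{3.11}.2 makes $\mathcal B(G)$ finitely generated and a C-monoid, while if $G$ is infinite then $\mathcal B(G)$ is not a C-monoid by Proposition \ref{3.9}.2. The principal difficulty of the whole theorem thus lies in the non-seminormality construction of part (2) (and, secondarily, in the length-set obstruction used for the transfer-Krull implication (f)); the rest is a careful assembly of the propositions already established.
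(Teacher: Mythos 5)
Your assembly of the forward implications in part (1), your proof of (e)$\Rightarrow$(a) (localizing $S\in\mathcal B(G)^{\ast}$ at $\mathfrak p_g$ via a product-one sequence over $G\setminus\{g\}$, which is condensed and generates $G$ once $|G|\ge 3$), your direct argument that $|G'|\le 2$ implies seminormality, and part (3) are all correct; the first and third of these are in fact cleaner than the corresponding steps in the paper, which proves (e)$\Rightarrow$(a) by exhibiting two fractions for each commutator with disjoint-support denominators and cites the literature for the easy half of (2). But the heart of the theorem is the implication ``$\mathcal B(G)$ seminormal $\Rightarrow |G'|\le 2$'', and there you only sketch a Heisenberg-type model and explicitly defer the reduction of an arbitrary $G$ with $|G'|\ge 3$ to that model. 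That reduction is where essentially all of the work lies: the paper's proof splits into the case where some non-commuting pair satisfies $gh^2\neq h^2g$ or $gh^3\neq h^3g$ (fanning into roughly seven subcases, each settled by an explicit $T\in\mathsf q(\mathcal B(G))\setminus\mathcal B(G)$ with $T^{[2]},T^{[3]}\in\mathcal B(G)$), and the residual case where $G'$ is a central elementary $2$-group of order $\ge 4$, which needs a separate combinatorial claim producing $g,h,f$ whose commutators $e_1,e_2,e_3$ are independent enough that $\pi\bigl(f^{-1}g^{-1}h^{-1}g^{-1}f^{-1}\bdot g\bdot gf\bdot fh\bigr)=\{e_1,e_2,e_3,e_1e_2,e_1e_3,e_1e_2e_3\}\not\ni 1$. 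None of this is present. Moreover, since you derive (d)$\Rightarrow$(a) from part (2), this gap also breaks part (1); the paper avoids the dependency entirely by proving (d)$\Rightarrow$(a) directly with three short explicit witnesses (e.g.\ $T=h\bdot(gh^{-1}g^{-1})$ when $gh^2=h^2g$).

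The (f)$\Rightarrow$(a) sketch is not merely incomplete but aimed in a direction that cannot succeed. The target Krull monoid of a transfer homomorphism may have infinite class group, and by Kainrath's theorem (invoked in Theorem \ref{main2}.2) every finite subset of $\N_{\ge 2}$ occurs as a set of lengths of such a Krull monoid; indeed for many non-abelian $G$ the paper shows $\mathcal L(G)$ is exactly the full system $\{L\subseteq\N_{\ge 2}\colon L\text{ finite, nonempty}\}\cup\{\{0\},\{1\}\}$, so no ``length configuration'' can distinguish $\mathcal B(G)$ from a Krull monoid. The paper's argument is of a different nature: it produces atoms $W$ and $W\bdot T$ with $T\in\mathsf q(\mathcal B(G))\setminus\mathcal B(G)$ and $T^{[n]}\in\mathcal B(G)$ for $n\in\{2,3\}$, so that $\theta(W)^{[n]}$ divides $\theta(W\bdot T)^{[n]}$ in the target; root closedness of the Krull target then forces $\theta(W)\mid\theta(W\bdot T)$, which is incompatible with a transfer homomorphism sending both to atoms. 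You would need to supply an argument of this divisibility type (or another genuine obstruction) to close (f)$\Rightarrow$(a).
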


\begin{proof}
1. By definition, (a) $\Rightarrow$ (g) $\Rightarrow$ (b) $\Rightarrow$ (c) $\Rightarrow$ (d), (a) $\Rightarrow$ (b) $\Rightarrow$ (e),
	and (a) $\Rightarrow$ (b) $\Rightarrow$ (f) are clear.

\smallskip
(d) $\Rightarrow$ (a) Suppose $\mathcal B(G)$ is root closed and assume to the contrary, that $G$ is non-abelian. Thus there exist $g,h\in G$ such that $gh\neq hg$. If $gh^2=h^2g$ and $gh^3=h^3g$, then $gh^3=h^3g=hgh^2$, whence $gh=hg$, a contradiction. Therefore $gh^2\neq h^2g$ or $gh^3\neq h^3g$. We distinguish three cases.

If $gh^2= h^2g$, then $T= h\bdot (gh^{-1}g^{-1})=\frac{h\bdot g^{-1}\bdot (gh^{-1}g^{-1})\bdot g}{g\bdot g^{-1}}\in \mathsf q(\mathcal{B}(G))\setminus \mathcal{B}(G)$, but $T^{[2]}=h^{[2]}\bdot (gh^{-1}g^{-1})\bdot (gh^{-1}g^{-1})\in\mathcal{B}(G)$, a contradiction to our assumption that $\mathcal B(G)$ is root closed.

If  $gh^3= h^3g$, then $T= h\bdot (gh^{-1}g^{-1})\in \mathsf q(\mathcal{B}(G))\setminus \mathcal{B}(G)$, but $T^{[3]}=h^{[3]}\bdot (gh^{-1}g^{-1})\bdot (gh^{-1}g^{-1})\bdot (gh^{-1}g^{-1})\in\mathcal{B}(G)$, a contradiction to our assumption that $\mathcal B(G)$ is root closed.

If $gh^2\neq h^2g$ and $gh^3\neq h^3g$, then $$T= g\bdot (h^2g^{-1}h^{-2})\bdot h\bdot h^{-1}=\frac{g\bdot h^{-1}\bdot h^{-1}\bdot (h^2g^{-1}h^{-2})\bdot h\bdot h}{h\bdot h^{-1}}\in \mathsf q(\mathcal{B}(G))\setminus\mathcal{B}(G)\,,$$ but $T^{[2]}=g^{[2]}\bdot h^{-1}\bdot h^{-1}\bdot h^2g^{-1}h^{-2}\bdot h^2g^{-1}h^{-2} \bdot h\bdot h \in\mathcal{B}(G)$,
 a contradiction to our assumption that $\mathcal B(G)$ is root closed.

\medskip	
(e) $\Rightarrow$ (a)	Suppose $\mathcal B(G)$ is a weakly Krull monoid. We claim that $\mathcal B(G)^*=\mathcal B(G)$. If this holds, then for every element $g\in G'$, the sequence $g$ is a product-one sequence. Therefore $G'$ is trivial and hence $G$ is abelian.
 In fact we will show  \begin{equation*}
\mathcal B (G)\subseteq\mathcal B(G)^{\ast}\subseteq\bigcap_{{\mathfrak p} \in \mathfrak X (\mathcal B (G))} \mathcal B (G)_{\mathfrak p}=\mathcal B(G).
\end{equation*}	
In view of Proposition \ref{3.8}, it suffices to prove  $\mathcal B(G)^{\ast} \subseteq  \mathcal B(G)_{\mathfrak p_x}$ for all  $x\in G$.

 We first show  $ghg^{-1}h^{-1}\in \mathcal B(G)_{{\mathfrak p}_x}$ for all $x\in G$ and $g,h\in G$. 	Let $g, h \in G$ and $f = ghg^{-1}h^{-1}$. If $f=1$, then $f\in \bigcap_{x\in G} \mathcal B(G)_{\mathfrak p_x}$. Suppose $f\neq 1$. Then $\{hg, g^{-1}h^{-1}\}\cap \{h,h^{-1}, g,g^{-1}\}=\emptyset$.
 Since
 \begin{align*}
 f=\frac{f\bdot hg\bdot g^{-1}h^{-1}}{hg\bdot g^{-1}h^{-1}}=\frac{f\bdot g\bdot g^{-1}\bdot h\bdot h^{-1}}{g\bdot g^{-1}\bdot h\bdot h^{-1}} &\in\mathsf q(\mathcal B(G))\,,\\
 \text{ and }\quad f\bdot hg\bdot g^{-1}h^{-1},\quad f\bdot g\bdot g^{-1}\bdot h\bdot h^{-1}&\in  \mathcal B(G)\,,
 \end{align*} we know $f\in \mathcal B(G)_{{\mathfrak p}_x}$ for all $x\in G$.

 Next, we show that  $f\in \mathcal B(G)_{\mathfrak p_x}$ for all $x\in G$, where  $f\in G'$. Let $f\in G'=[ghg^{-1}h^{-1}:g,h\in G]$ and let $x\in G$. Then there exist $k\in \N$ and $f_1,\ldots, f_k\in \{ghg^{-1}h^{-1}: g,h\in G\}$ such that $f=f_1\ldots f_k$.  For each $i\in [1,k]$, there exists $S_i\in \mathcal B(G)\setminus \mathfrak p_x$ such that $f_i\bdot S_i\in \mathcal B(G)$, whence $S=S_1\bdot \ldots\bdot S_k\in \mathcal B(G)\setminus \mathfrak p_x$ and $f\bdot S\in \mathcal B(G)$. Therefore $f=(f\bdot S)/S\in \mathcal B(G)_{\mathfrak p_x}$.

 Finally we prove  $\mathcal B(G)^{\ast} \subseteq  \mathcal B(G)_{\mathfrak p_x}$ for all $x\in G$.
  Let $x\i G$ and $A\in \mathcal F(G)$ with $\pi(A)\subseteq G'$ be arbitrary. For any $f\in \pi(A)\subseteq G'$, we have that $f\in \mathcal B(G)_{\mathfrak p_x}$ as shown in the previous prargraph. Then there exists $S\in \mathcal B(G)\setminus \mathfrak p_x$ such that $f\bdot S\in \mathcal B(G)$, which implies that $A\bdot S\in \mathcal B(G)$ and hence $A=(A\bdot S)/S\in \mathcal B(G)_{\mathfrak p_x}$.

\medskip	
(f) $\Rightarrow$ (a) Suppose $\mathcal B(G)$ is a transfer Krull monoid. Then there is a transfer homomorphism $\theta\colon \mathcal B(G)\rightarrow \mathcal B(G_0)$, where $G_0\subseteq G_1$ and $G_1$ is an abelian group.
	 Assume to the contrary that $G$ is non-abelian. Then there exist $g,h\in G$ such that $gh\neq hg$.
	
	Suppose $gh^2=h^2g$. Let $T=h\bdot gh^{-1}g^{-1}$ and $W=g\bdot g^{-1}$. Then $T\in \mathsf q(\mathcal B(G))\setminus \mathcal B(G)$,  $T^{[2]}=h^{[2]}\bdot gh^{-1}g^{-1}\bdot gh^{-1}g^{-1}\in \mathcal B(G)$, and  $W$, $W\bdot T$ are atoms of $\mathcal B(G)$. It follows that $\theta(W)$ and $\theta(W\bdot T)$ are atoms of $\mathcal B(G_0)$. Note that $\theta(W\bdot T)^2=\theta(W^{[2]}\bdot T^{[2]})=\theta(W)^2\theta(T^{[2]})$. Then $\theta(W)$ divides $\theta(W\bdot T)$ in $\mathcal B(G_0)$, a contradiction to the fact that  $\theta(W)$ and $\theta(W\bdot T)$ are both atoms.

	Suppose $gh^2\neq h^2g$ and $gh^3=h^3g$. Let $T=h\bdot gh^{-1}g^{-1}$ and $W=g\bdot g^{-1}$. Then $T\in \mathsf q(\mathcal B(G))\setminus \mathcal B(G)$, $T^{[3]}=h^{[3]}\bdot gh^{-1}g^{-1}\bdot gh^{-1}g^{-1}\bdot gh^{-1}g^{-1}\in \mathcal B(G)$, and  $W$, $W\bdot T$ are atoms of $\mathcal B(G)$. It follows that  $\theta(W)$ and $\theta(W\bdot T)$ are atoms of $\mathcal B(G_0)$. Note that $\theta(W\bdot T)^{[3]}=\theta(W^{[3]}\bdot T^{[3]})=\theta(W)^{[3]}\theta(T^{[3]})$. Then $\theta(W)$ divides $\theta(W\bdot T)$ in $\mathcal B(G_0)$, a contradiction to the fact that $\theta(W)$ and $\theta(W\bdot T)$ are both atoms.

	Suppose $gh^2\neq h^2g$ and $gh^3\neq h^3g$.
	Let $T=g\bdot h^2g^{-1}h^{-2}\bdot h\bdot h^{-1}$ and $W=h\bdot h^{-1}$. Then $T\in \mathsf q(\mathcal B(G)) \setminus \mathcal B(G)$,  $T^{[2]}=g^{[2]}\bdot (h^{-1})^{[2]}\bdot h^2g^{-1}h^{-2}\bdot h^2g^{-1}h^{-2}\bdot h^{[2]}\in \mathcal B(G)$, and  $W$, $W\bdot T$ are atoms of $\mathcal B(G)$. It follows that $\theta(W)$ and $\theta(W\bdot T)$ are atoms of $\mathcal B(G_0)$. Note that $\theta(W\bdot T)^{[2]}=\theta(W^{[2]}\bdot T^{[2]})=\theta(W)^{[2]}\theta(T^{[2]})$. Then $\theta(W)$ divides $\theta(W\bdot T)$ in $\mathcal B(G_0)$, a contradiction to  the fact that $\theta(W)$ and $\theta(W\bdot T)$ are both atoms.
	
\medskip

2. The proof for $(\Leftarrow)$ is the same as in \cite[Theorem 3.10]{Oh20a}.  It remains to prove
	$(\Rightarrow)$. Suppose $\mathcal B(G)$ is seminormal. Assume to the contrary that $|G'|\ge 3$. To get a contradiction, we will show that there exists a sequence $T\in \mathsf  q(\mathcal B(G))\setminus \mathcal B(G)$ such that $T^{[2]}, T^{[3]}\in \mathcal B(G)$.
	
 Since $G$ is non-abelian,  there exist $g,h\in G$ such that $gh\neq hg$. If $gh^2=h^2g$ and $gh^3=hg^3$, then $gh=hg$, a contradiction. Therefore either $gh^2\neq h^2g$ or  $gh^3\neq hg^3$.
  We distinguish two  cases.

\medskip

\noindent\textbf{Case 1:}  There exist $g,h\in G$ such that $gh\neq hg$ and $gh^2\neq h^2g$.
\smallskip

	If   $gh^3\neq h^3g$,  then let $T=g\bdot h^2g^{-1}h^{-2}\bdot h\bdot h^{-1}$. Therefore  $T\in \mathsf q(\mathcal B(G)) \setminus \mathcal B(G)$ and \begin{align*}
	T^{[2]}&=g^{[2]}\bdot (h^{-1})^{[2]}\bdot h^2g^{-1}h^{-2}\bdot h^2g^{-1}h^{-2}\bdot h^{[2]}\in \mathcal B(G)\,,\\
 T^{[3]}&=g^{[3]}\bdot (h^{-1})^{[2]}\bdot (h^2g^{-1}h^{-2})^{[3]}\bdot h^{[2]}\bdot h\bdot h^{-1}\in \mathcal B(G)\,.
		\end{align*}

Now we may assume $gh^3 =h^3g$.
	If $g^2h= hg^2$, then let
		$T=hgh^2\bdot h^{-1}g^{-1}\bdot h^{-2}$. Then $T\in \mathsf q(\mathcal B(G)) \setminus \mathcal B(G)$ and \begin{align*}
		T^{[2]}&=hgh^2\bdot hgh^2\bdot h^{-1}g^{-1}\bdot h^{-2}\bdot h^{-1}g^{-1}\bdot h^{-2}\in \mathcal B(G)\,,\\
	 T^{[3]}&=hgh^2\bdot (h^{-2})^{[2]}\bdot (h^{-1}g^{-1})^{[2]}\bdot h^{-2}\bdot h^{-1}g^{-1}\bdot (hgh^2)^{[2]}\in \mathcal B(G)\,.
		\end{align*}	
	If $g^2h\neq hg^2$ and $g^3h\neq hg^3$, then let $T=h\bdot g^2h^{-1}g^{-2}\bdot g\bdot g^{-1}$. Therefore  $T\in \mathsf q(\mathcal B(G)) \setminus \mathcal B(G)$ and \begin{align*}
	T^{[2]}&=h^{[2]}\bdot (g^{-1})^{[2]}\bdot g^2h^{-1}g^{-2}\bdot g^2h^{-1}g^{-2}\bdot g^{[2]}\in \mathcal B(G)\,,\\
	T^{[3]}&=h^{[3]}\bdot (g^{-1})^{[2]}\bdot (g^2h^{-1}g^{-2})^{[3]}\bdot g^{[2]}\bdot g\bdot g^{-1}\in \mathcal B(G)\,.
	\end{align*}
Suppose $g^2h\neq hg^2$ and $g^3h= hg^3$.	
If $\ord(ghg^{-1}h^{-1})=2$, then let $T=ghg^{-1}\bdot h^{-1}$. Then $T\in \mathsf q(\mathcal B(G))\setminus \mathcal B(G)$,
\begin{align*}
T^{[2]}&=(ghg^{-1}\bdot h^{-1})^{[2]}\in \mathcal B(G)\,,\\
\text{ and }\quad T^{[3]}&=(ghg^{-1})^{[3]}\bdot (h^{-1})^{[3]}\in \mathcal B(G)\,.
\end{align*}
If $ghgh=hghg$, then let
	 $T=hg\bdot h^{-1}g^{-1}h\bdot h^{-1}$. Then $T\in \mathsf q(\mathcal B(G))\setminus \mathcal B(G)$, \begin{align*}
	 T^{[2]}&=(hg)^{[2]}\bdot (h^{-1}g^{-1}h\bdot h^{-1})^{[2]}\in \mathcal B(G)\,,\\
	 \text{ and }\quad
			T^{[3]}&=(h^{-1}\bdot hg)^{[3]}\bdot (h^{-1}g^{-1}h)^{[3]}\in \mathcal B(G)\quad (\text{note that } g^3h= hg^3)\,.
			\end{align*}		
		If $ghgh=h^2g^2$, then
			let $T=gh\bdot g^{-1}h^{-1}g\bdot g^{-1}$. Then $T\in \mathsf q(\mathcal B(G))\setminus \mathcal B(G)$, \begin{align*}
			T^{[2]}&=(gh)^{[2]}\bdot g^{-1}\bdot (g^{-1}h^{-1}g)^{[2]}\bdot g^{-1}\in \mathcal B(G)\,,\\
			\text{ and }\quad
			T^{[3]}&=(g^{-1}\bdot gh)^{[3]}\bdot (g^{-1}h^{-1}g)^{[3]}\in \mathcal B(G) \quad (\text{note that } gh^3= h^3g)\,.
			\end{align*}		
			If $\ord(ghg^{-1}h^{-1})\neq 2$,  $ghgh\neq h^2g^2$, and $ghgh\neq hghg$, then let
	 $T=ghg^{-1}\bdot h^{-1}\bdot  h^{-1}gh\bdot g^{-1}$. Then $T\in \mathsf q(\mathcal B(G))\setminus \mathcal B(G)$,
	  \begin{align*}
	T^{[2]}&=g^{-1}\bdot (ghg^{-1})^{[2]}\bdot g^{-1}\bdot (h^{-1})^{[2]}\bdot (h^{-1}gh)^{[2]}\in \mathcal B(G)\,,\\
	\text{ and }\quad
T^{[3]}&=(ghg^{-1})^{[3]}\bdot (h^{-1})^{[3]}\bdot  (h^{-1}gh)^{[3]}\bdot( g^{-1})^{[3]}\in \mathcal B(G)\,.
	  \end{align*}
			
		\medskip

\noindent\textbf{Case 2:} For all $g,h\in G$ with $gh\neq hg$, we have that $gh^2=hg^2$.

\smallskip
	
	Then $G_1=\langle h^2\colon  h\in G\rangle\subseteq \mathsf Z(G)=\{g\colon gh=hg\text{ for every }h\in G\}$, whence $G_1$ is a normal subgroup of $G$. Since every element of $G/G_1$ has order at most $2$, we obtain that $G/G_1$ is abelian and hence $G'\subseteq G_1\subseteq \mathsf Z(G)$. Therefore $G'$ is an abelian group.  Let $g,h\in G$ such that $gh\neq hg$. Since $G'\subseteq \mathsf{Z}(G)$ it follows that
	$$ghg^{-1}h^{-1}ghg^{-1}h^{-1}=gghg^{-1}h^{-1}hg^{-1}h^{-1}=1\,,$$
	whence $\ord(ghg^{-1}h^{-1})=2$ and $G'$ is an elementary $2$-group. Therefore $|G'|\ge 4$ and
we proceed by the following claim.
	
	\medskip
	\noindent{\bf Claim A. }{\it   There exist distinct elements $g,h,f\in G$ such that the three elements $1$, $ghg^{-1}h^{-1}$, $gfg^{-1}f^{-1}$ are distinct and  $hfh^{-1}f^{-1}\not\in \{1, ghg^{-1}h^{-1}, ghg^{-1}h^{-1}gfg^{-1}f^{-1}\}$.
	}
	\smallskip

	\begin{proof}[Proof of Claim A]
		Since $G$ is  non-abelian, there exist $g,h\in G$ such that $ghg^{-1}h^{-1}\neq 1$. Since $|G'|\ge 4$, there exists $x\in G'$ such that $x\not\in \{1, ghg^{-1}h^{-1}\}$. Note that $x\in \langle yzy^{-1}z^{-1}\colon y,z\in G\rangle $. Then there exist $u,v\in G$ such that $uvu^{-1}v^{-1}\not\in \{1, ghg^{-1}h^{-1}\}$, whence it follows that $|\{g,h\}\cap \{u,v\}|\le 1$ by applying Lemma \ref{3.13}. From now on Lemma \ref{3.13} will be used frequently without furhter mention for the rest of the argument. We distinguish two cases.
		
		Suppose $|\{g,h\}\cap \{u,v\}|= 1$.
		 By symmetry we may assume $g=u$.
		If $hvh^{-1}v^{-1}\not\in \{1, ghg^{-1}h^{-1}, ghg^{-1}h^{-1}gvg^{-1}v^{-1}\}$, then $g,h,v$ are the required three elements.  If $hvh^{-1}v^{-1}=ghg^{-1}h^{-1}$, then $g,v,h$ are the required three elements.
		 If $hvh^{-1}v^{-1}=ghg^{-1}h^{-1}gvg^{-1}v^{-1}$, then $g(gv)g^{-1}(gv)^{-1}=gvg^{-1}v^{-1}$ (note that $G'\subseteq \mathsf Z(G)$) and $h(gv)h^{-1}(gv)^{-1}=hvh^{-1}v^{-1}ghg^{-1}h^{-1}=gvg^{-1}v^{-1}$, whence $g,h,gv$ are the required three elements. If $hvh^{-1}v^{-1}=1$, then $g(gv)g^{-1}(gv)^{-1}=gvg^{-1}v^{-1}$ and $h(gv)h^{-1}(gv)^{-1}=hvh^{-1}v^{-1}ghg^{-1}h^{-1}=ghg^{-1}h^{-1}$, whence $gv,g,h$ are the required three elements.

			Suppose $|\{g,h\}\cap \{u,v\}|= 0$. If there exist $x\in \{g,h\}$ and $y\in \{u,v\}$ such that $xyx^{-1}y^{-1}\neq 1$, then by symmetry we may assume $x=g$ and $y=u$. If $gug^{-1}u^{-1}\neq ghg^{-1}h^{-1}$, then consider the elements $g,h,u$ and go back to the previous case. If $gug^{-1}u^{-1}=ghg^{-1}h^{-1}$, then $gug^{-1}u^{-1}\neq uvu^{-1}v^{-1}$, consider the elements $u, g, v$, and  go back to the previous case. Therefore we may assume that the elements from $\{g,h\}$ commute with those from $\{u,v\}$, whence
			\begin{align*}
			(gu)h(gu)^{-1}h^{-1}=&guhu^{-1}g^{-1}h^{-1}=uu^{-1}ghg^{-1}h^{-1}=ghg^{-1}h^{-1}\,,\\
			(gu)v(gu)^{-1}v^{-1}=&guvu^{-1}g^{-1}v^{-1}=gg^{-1}uvu^{-1}v^{-1}=uvu^{-1}v^{-1}\,.
			\end{align*}
			  Now consider the elements $gu, h, v$ and go back to the previous case.
			  
		\qedhere[End of proof of Claim A.]
		\end{proof}
	
	By Claim A,  we can choose distinct $g,h,f\in G$ such that the three elements $1, ghg^{-1}h^{-1}, gfg^{-1}f^{-1}$ are distinct and $hfh^{-1}f^{-1}\not\in \{1, ghg^{-1}h^{-1}, ghg^{-1}h^{-1}gfg^{-1}f^{-1}\}$.
	Consider the sequence $T=f^{-1}g^{-1}h^{-1}g^{-1}f^{-1}\bdot g\bdot gf\bdot fh$.  To simplify the notation, we set $e_1=ghg^{-1}h^{-1}$, $e_2=gfg^{-1}f^{-1}$,  $e_3=hfh^{-1}f^{-1}$, and $q=f^{-1}g^{-1}h^{-1}g^{-1}f^{-1}$.
	Note that $G'\subseteq \langle h^2\colon h\in G\rangle\subseteq \mathsf Z(G)$.
	We consider the following products.
	\begin{align*}
	\textcircled{1}&\quad && qg(gf)(fh)=qg^2f^2h=f^{-1}g^{-1}h^{-1}gfh=f^{-1}g^{-1}h^{-1}ghh^{-1}fh=[g,h][f,h]=e_1e_3\,,\\
	\textcircled{2}&&& qg(fh)(gf)=qggffh(fh)^{-1}(gf)^{-1}fhgf=[g,h][f,h][fh,gf]=[f,g]=e_2\,,\\
	\textcircled{3}&&& q(fh)g(gf)=qgfh(fh)^{-1}g^{-1}fhggf=[f,g][fh,g]=[h,g]=e_1\,,\\
	\textcircled{4}&&& q(fh)(gf)g=q(fh)g(gf)(gf)^{-1}g^{-1}gfg=[h,g][gf,g]=[h,g][f,g]=e_1e_2\,,\\
	\textcircled{5}&&& q(gf)(fh)g=q(fh)(gf)(gf)^{-1}(fh)^{-1}gffhg=[h,g][f,g][gf,fh]=[f,h]=e_3\,,\\
	\textcircled{6}&&& q(gf)g(fh)=[gf,h][g,f]=[f,h][f,g][g,h]=e_1e_2e_3\,.
	\end{align*}
	It follows that $\pi(T)\supseteq\{e_1, e_2, e_3, e_1e_2, e_1e_3, e_1e_2e_3\}$, whence $1\in \pi(T^{[2]})$ and $1\in \pi(T^{[3]})$. Since  $T\in \mathsf q(\mathcal B(G))$, it suffices to show $T\not\in \mathcal B(G)$. Assume to contrary that $1\in \pi(T)$. Then $1\in \{e_1, e_2, e_3, e_1e_2, e_1e_3, e_1e_2e_3\}$, a contradiction to the choice of the three elements $g,h,f$.
	
	\medskip
3. Theorem \ref{3.11} implies that $\mathcal B(G)$ is a G-monoid if and only if $G$ is finite. If $G$ is finite, then $\mathcal B(G)$ is a C-monoid by \cite[Theorem 3.2.1]{Cz-Do-Ge16a}, and the converse follows by Proposition \ref{3.9}.2.
\end{proof}

\section{Arithmetic properties of $\mathcal B(G)$ and $\mathcal B(G_0)$} \label{4}

In this section, we study arithmetic properties of monoids of product-one sequences. Theorem \ref{3.11} provides conditions ensuring that monoids of product-one sequences are finitely generated resp. C-monoids. The arithmetic of such monoids is well understood (see \cite[Theorems 3.1.5, 3.3.4, 4.4.11, and 4.6.6]{Ge-HK06a}), and  our goal is to obtain results beyond these classes of monoids (see Proposition \ref{4.2} and Theorem \ref{main2}). In Section \ref{5}, we get more precise results for infinite dihedral groups.

We recall some concepts from factorization theory (for details see \cite[Chapter 1]{Ge-HK06a}), and for simplicity we do this in the setting of reduced atomic monoids. Let $H$ be a reduced atomic monoid and let  $\mathcal A(H)$ be its set of atoms.
Consider the free abelian monoid $\mathsf Z(H)=\mathcal F(\mathcal A(H))$ with the epimorphism $\pi:\mathsf Z(H)\to H$ via $\pi(u)=u$ for all $u\in \mathcal A(H)$.
For $a\in H$,
\begin{itemize}
\item $\mathsf Z(a)=\pi^{-1}(\{a\})$ is the \textit{set of factorizations} of $a$, and

\item  $\mathsf L(a)=\{|z|\colon z\in\mathsf Z(a)\}$ is  the \textit{set of lengths}  of $a$.
\end{itemize}
Then $H$ is said to be a \BF- (resp. FF-) monoid, if  $\mathsf L(a)$ (resp. $\mathsf Z(a)$) is finite for all $a\in H$. We consider the
system
$\mathcal L(H)=\{\mathsf L(a)\colon a\in H\}$ of all sets of lengths of $H$. We need further invariants, describing its structure. The \textit{set of distances} of $H$ is $\Delta(H)=\bigcup_{L\in\mathcal L(H)}\Delta(L)$, where $\Delta(L)=\{d\in\N\colon \text{there is } l\in L \text{ such that } L\cap [l,l+d]=\{l,l+d\}\}$. For  $k\in \N$, we define $\mathcal U_k(H)=\bigcup_{k\in L\in\mathcal L(H)} L$ and set $\rho_k(H)=\sup\mathcal U_k(H)$ and $\lambda_k(H)=\min\mathcal U_k(H)$. The \textit{elasticity} of $H$ is $\rho(H)=\sup\{\frac{\sup(L)}{\min(L)}\colon L\in\mathcal L(H)\}$.

For $a\in H$, we set $\omega(H,a)=\omega(a)$ to be the smallest $N\in\N\cup\{\infty\}$ with the property that for all $n\in\N$ and all $a_1,\hdots ,a_n\in H$ we have that whenever $a\mid \prod_{i=1}^{n}a_i$, there is $\Omega\subseteq [1,n]$ with $|\Omega|\leq N$ such that $a\mid \prod_{i\in\Omega}a_i$.  We denote by $\omega(H)=\sup\{\omega(H,u)\colon u\in\mathcal A(H)\}$.

Now let $u\in \mathcal A(H)$. We define $\tau(H,u)=\tau(u)$ to be the smallest $N\in\N\cup\{\infty\}$ with the property that for all $n\in\N$ and all $a_1,\hdots ,a_n\in \mathcal A(H)$ with $u\t a_1\cdot \ldots\cdot a_n$ and $u\nmid \prod_{i\in [1,n]\setminus\{j\}}a_i$ for every $j\in [1,n]$, 
we have that $\min\mathsf L(a_1\cdot\ldots\cdot a_n\cdot u^{-1})\le N$. We denote by $\tau(H)=\sup\{\tau(H,u)\colon u\in\mathcal A(H)\}$. 
 We denote by $\mathsf t(H,u)=\mathsf t(u)$ the smallest $N\in\N_0\cup\{\infty\}$ with the property that if $a\in H$ and there is a factorization of $a$, where $u$ occurs, then for every factorization $z$ of $a$ there is a factorization $z'$ of $a$ such that $u$ occurs in $z'$ and $\mathsf d(z,z')\leq N$, where $\mathsf d(z,z')=\max\{|z|, |z'|\}-|\gcd(z,z')|$. As before, we set $\mathsf t(H)=\sup\{\mathsf t(H,u)\colon u\in\mathcal A(H)\}$. $H$ is said to be \textit{locally tame} if $\mathsf t(H,u)$ is finite for all $u\in\mathcal A(H)$, and \textit{tame} if $\mathsf t(H)$ is finite.

Let $a\in H$ and $N\in\N_0$. A finite sequence $z_0,\ldots,z_k\in\mathsf Z(a)$ is called an \textit{$N$-chain of factorizations} if $\mathsf d(z_{i-1},z_i)\leq N$ for all $i\in[1,k]$. We denote by $\mathsf c_H(a)=\mathsf c(a)$ the smallest $N\in\N_0\cup\{\infty\}$ such that for any two factorizations $z,z'\in\mathsf Z(a)$ there is an $N$-chain of factorizations from $z$ to $z'$. The \textit{catenary degree} of $H$ is defined to be $\mathsf c(H)=\sup\{\mathsf c_H(a)\colon a\in H\}$.
Let $G$ be a group and $G_0\subseteq G$ a subset. As is convenient, instead of $\ast(\mathcal B(G_0))$ we write $\ast(G_0)$ for the above explained invariants $\ast$.

\smallskip
By \cite[Theorem 1.6.3]{Ge-HK06a} and by \cite[Propositions 3.5 and 3.6]{Ge-Ka10a}, we have
\begin{equation} \label{basic}
\rho (G_0) \le \omega (G_0) \quad \text{and} \quad \sup \Delta (G_0) \le \mathsf c (G_0) \le \omega (G_0) \le \mathsf t (G_0) \le \omega (G_0)^2 \,,
\end{equation}
and  by \cite[Lemma 3.5]{Ge-Ha08}, for all $A\in \mathcal A(G_0)$, we have that
\begin{equation} \label{basic1}
\mathsf t(G_0,A)=\max\{\omega(G_0,A), 1+\tau(G_0,A)\}\,, \text{ whence }\mathsf t(G_0)=\max\{\omega(G_0), 1+\tau(G_0)\}\,.
\end{equation}

In particular, if $\omega (G_0) < \infty$, then the set of distances $\Delta (G_0)$ is finite.

\smallskip
\begin{proposition} \label{4.1}
Let $G$ be a group and $G_0 \subseteq G$ a condensed subset. Then $\mathcal B (G_0)$ is an \FF-monoid and we have
\begin{enumerate}
\item For all $k \in \N$, $\rho_k (G_0) \le k \mathsf D (G_0)/2$ and $\rho (G_0) \le \mathsf D (G_0)/2$.

\item If $G_0$ is finite and $\mathsf D (G_0) < \infty$, then the elasticity $\rho (G_0)$ is accepted and $\omega (G_0) < \infty$.

\item If $\langle G_0 \rangle$ is abelian and $\mathsf D (G_0) < \infty$, then $\omega (G_0) < \infty$.
\end{enumerate}
\end{proposition}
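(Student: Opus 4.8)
The plan is to establish the \FF-property first and then treat the three items in turn, throughout exploiting that $\mathcal B(G_0)$ is a submonoid of the free abelian monoid $\mathcal F(G_0)$, with the finiteness input coming from the earlier sections. For the \FF-property, note that the length map $|\bdot|\colon\mathcal F(G_0)\to\N_0$ is additive and vanishes only on the identity, so any factorization $S=A_1\bdot\ldots\bdot A_r$ of $S\in\mathcal B(G_0)$ into atoms satisfies $r\le |S|$; in particular $\mathcal B(G_0)$ is a \BF-monoid and hence atomic. Since each $A_i$ is a subsequence of $S$ and $S$ has only $\prod_{g}(\mathsf v_g(S)+1)<\infty$ subsequences, the factorization length being bounded by $|S|$ leaves only finitely many factorizations, so $\mathcal B(G_0)$ is an \FF-monoid.

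For (1) the key observation is that the only atom of length one is the sequence $1_G$ (when $1_G\in G_0$): if $|A|\ge 2$ and $1_G\in\supp(A)$, then $\pi(A\bdot 1_G^{[-1]})=\pi(A)$, so $A$ would have a proper product-one subsequence, contradicting irreducibility. Consequently, in every factorization of a fixed $S$ the atom $1_G$ occurs exactly $t=\mathsf v_{1_G}(S)$ times and all remaining atoms have length $\ge 2$. Comparing a length-$k$ factorization (where the non-trivial atoms have length $\le\mathsf D(G_0)$) with an arbitrary length-$\ell$ factorization of the same $S$ gives $|S|-t\le (k-t)\mathsf D(G_0)$ and $|S|-t\ge 2(\ell-t)$, whence $\ell\le t+(k-t)\mathsf D(G_0)/2\le k\mathsf D(G_0)/2$ in the essential case $\mathsf D(G_0)\ge 2$ (the case $G_0\subseteq\{1_G\}$ being trivial). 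This yields $\rho_k(G_0)\le k\mathsf D(G_0)/2$, and since any $L\in\mathcal L(G_0)$ with $\min L=m$ satisfies $\sup L\le\rho_m(G_0)\le m\mathsf D(G_0)/2$, dividing by $m$ gives $\rho(G_0)\le\mathsf D(G_0)/2$.

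For (2), Theorem \ref{3.11}.2 shows that ``$G_0$ finite and $\mathsf D(G_0)<\infty$'' is equivalent to $\mathcal B(G_0)$ being finitely generated. Acceptance of the elasticity of a finitely generated monoid is part of the standard theory (\cite[Theorem 3.1.4]{Ge-HK06a}). For $\omega(G_0)<\infty$, a finitely generated monoid is $v$-noetherian by \cite[Theorem 2.7.9]{Ge-HK06a}, so $\omega(G_0,A)<\infty$ for every atom $A$ by \cite{Ge-Ha08}; as a finitely generated monoid has only finitely many atoms, $\omega(G_0)=\sup_A\omega(G_0,A)<\infty$.

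For (3) the idea is to reduce divisibility in $\mathcal B(G_0)$ to divisibility in the free monoid. When $\langle G_0\rangle$ is abelian we have $\mathcal B(G_0)=\mathcal B(G_0)^{\ast}$, so by Proposition \ref{3.3}.2 the inclusion $\mathcal B(G_0)\hookrightarrow\mathcal F(G_0)$ is saturated; hence for $A,B\in\mathcal B(G_0)$ one has $A\mid_{\mathcal B(G_0)}B$ exactly when $A\mid_{\mathcal F(G_0)}B$. Thus, given $A\mid_{\mathcal B(G_0)}B_1\bdot\ldots\bdot B_n$, for each $g\in\supp(A)$ one has $\mathsf v_g(A)\le\sum_i\mathsf v_g(B_i)$, and greedily picking factors $B_i$ with $\mathsf v_g(B_i)\ge 1$ covers the multiplicity $\mathsf v_g(A)$ using at most $\mathsf v_g(A)$ of them; taking the union over $g\in\supp(A)$ yields $\Omega$ with $|\Omega|\le\sum_{g\in\supp(A)}\mathsf v_g(A)=|A|$, and saturation guarantees $A\mid_{\mathcal B(G_0)}\prod_{i\in\Omega}B_i$. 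Hence $\omega(G_0,A)\le|A|\le\mathsf D(G_0)$ for every atom $A$, so $\omega(G_0)\le\mathsf D(G_0)<\infty$. The main obstacle is the careful bookkeeping in (1) forced by the unique length-one atom $1_G$, which blocks the naive ``every atom has length $\ge 2$'' estimate; the reduction to $\mathcal F(G_0)$ via saturation in (3) is the other genuinely new idea, after which the covering bound is routine.
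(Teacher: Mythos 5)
Your proof is correct and follows essentially the same route as the paper: both arguments isolate the length-one atoms (which are exactly the prime element $1_G$) and compare total lengths to get $\rho_k(G_0)\le k\mathsf D(G_0)/2$, both reduce item (2) to finite generation via Theorem \ref{3.11} and the standard theory of finitely generated monoids, and your item (3) simply writes out the saturation-plus-covering argument that the paper delegates to \cite[Chapter 3.4]{Ge-HK06a}. The only cosmetic differences are that you bound $\rho(G_0)$ directly from $\sup L\le\rho_{\min L}(G_0)$ rather than via $\rho=\lim\rho_k/k$, and you obtain $\omega(G_0)<\infty$ in (2) from local tameness of $v$-noetherian monoids rather than from \eqref{basic}; neither affects correctness.
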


\begin{proof}
Since $\mathcal B (G_0)$ is a submonoid of the free abelian monoid $\mathcal F (G_0)$, it is an \FF-monoid by \cite[Corollary 1.5.7]{Ge-HK06a}.

1. The argument is the same as  when $G$ is abelian. Since it is short,  we give it here. Let $k \in \N$ and $U_1, \ldots, U_k, V_1, \ldots, V_{\ell} \in \mathcal A (G_0)$ such that $U_1 \bdot \ldots \bdot U_k = V_1 \bdot \ldots \bdot V_{\ell}$, where $\ell\in \N$ with $\ell\geq k$. If $|V_i|=1$ for some $i \in [1,\ell]$, then $V_i$ is a prime element of $\mathcal B (G_0)$ and hence there exists $j\in [1,k]$ such that $U_j=V_i$. Set $I=\{i\in [1,\ell]\colon |V_i|=1\}$ and $J=\{j\in [1,k]\colon |U_j|=1\}$.
It follows that
\[
2 (\ell-|I|)+|I| \le \sum_{i =1}^{\ell} |V_i| = \sum_{j=1}^k |U_j| \le  |J|+ \mathsf D (G_0) (k-|J|) \le |I|+\mathsf D(G_0)(k-|I|)\,,
\]
whence $\frac{\ell}{k}\le \frac{\ell-|I|}{k-|I|}\le \mathsf D (G_0)/2$ and
 $\rho_k (G_0) \le k \mathsf D (G_0)/2$.
Since, by \cite[Proposition 1.4.2]{Ge-HK06a},
\[
\rho (G_0) = \lim_{k \to \infty} \frac{\rho_k (G_0)}{k} \,,
\]
we obtain the upper bound for $\rho (G_0)$.

2. If $G_0$ is finite and $\mathsf D (G_0) < \infty$, then $\mathcal B (G_0)$ is finitely generated by Theorem \ref{3.11}. Thus the claim follows from \eqref{basic} and from \cite[Theorem 3.1.4]{Ge-HK06a}.

3. Suppose that $\langle G_0 \rangle$ is abelian. Then it is easy to see that $\omega (G_0, U) \le |U| \le \mathsf D (G_0)$ for all $U \in \mathcal A (G_0)$ (details can be found in \cite[Chapter 3.4]{Ge-HK06a}).
\end{proof}

Let $G_0$ be a  condensed subset of a group. If $G_0$ consists of finitely many torsion elements, then $\mathsf D (G_0) < \infty$ by Theorem \ref{3.11}. Examples  \ref{taualpha} and \ref{infinitefinitary} provide finite subsets with infinite Davenport constant.
%On the other hand, there are infinite sets $G_0$ with  $\mathsf D (G_0) < \infty$ resp. with $\omega (G_0) < \infty$ and with $\mathcal B (G_0)$ being not finitely generated, even when $\langle G_0 \rangle$ is abelian (see \cite[Theorem 3.4.2]{Ge-HK06a} and \cite{Ge-Gr09b}).

\smallskip
To describe the structure of sets of lengths, we need the concept of almost arithmetical (multi)progressions.
Let $M \in \N_0$, $d \in \N$ and $\mathcal D \subseteq [0, d]$ with $\{0, d \} \subseteq \mathcal D$. A  subset $L \subseteq \Z$ is said to be an
\begin{itemize}
\item {\it almost arithmetical multiprogression} (AAMP) with difference $d$, period $\mathcal D$, and bound $M$ if
    \[
    L = y + (L' \cup L^* \cup L'') \subseteq y + \mathcal D + d \Z \quad \text{is finite} \,,
    \]
    where $\min L^* =0$, $L^* = (\mathcal D + d \Z) \cap [0, \max L^*]$, $L' \subseteq [-M, -1]$, and $L'' \subseteq \max L^* + [1, M]$, and $y \in \Z$. $L'$ resp. $L''$ is called the initial resp. the end part of $L$.

\item {\it almost arithmetical progression} (AAP) with difference $d$ and bound $M$ if
    \[
    L = y + (L' \cup L^* \cup L'') \subseteq y  + d \Z  \,,
    \]
    where $L^*$ is a nonempty arithmetical progression with difference $d$ such that $\min L^*=0$, $L' \subseteq [-M, -1]$, and $L'' \subseteq \sup L^* + [1, M]$ (with the convention that $L'' = \emptyset$ if $L^*$ is infinite), and $y \in \Z$.
\end{itemize}
Thus, if $L$ is finite, then it is an AAP with difference $d$ and bound $M$  if and only if if it is an AAMP with difference $d$, bound $M$, and period $\{0, d\}$.

\smallskip
\begin{proposition} \label{4.2}
Let $G$ be a group and let $G_0 \subseteq G$ be a condensed subset with $\omega (G_0) < \infty$.
\begin{enumerate}
\item There is $M \in \N_0$ such that every $L \in \mathcal L (G_0)$ is an \AAMP \ with difference $d \in \Delta (G_0)$ and bound $M$.

\item There is $M \in \N_0$ such that, for every $k \in \N$,  $\mathcal U_k (H)$ is an \AAP \ with difference $\min \Delta (G_0)$ and bound $M$. Moreover, if $G_0$ is finite and $\mathsf D (G_0) < \infty$, then the initial and end parts of the sets $\mathcal U_k (G_0)$ repeat periodically.
\end{enumerate}
\end{proposition}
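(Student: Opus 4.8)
The plan is to derive everything from the single standing hypothesis $\omega(G_0)<\infty$ by checking that $\mathcal B(G_0)$ falls into a class of monoids for which the abstract structure theorems for sets of lengths and for unions of sets of lengths are already available, and then invoking them. So the first step is to harvest the arithmetical consequences of $\omega(G_0)<\infty$. By \eqref{basic} we have $\sup\Delta(G_0)\le \mathsf c(G_0)\le\omega(G_0)<\infty$ and $\mathsf t(G_0)\le\omega(G_0)^2<\infty$, so $\mathcal B(G_0)$ is tame with finite catenary degree, and $\Delta(G_0)$ is finite (as already observed after \eqref{basic1}). Likewise $\rho(G_0)\le\omega(G_0)<\infty$, and since $\rho_k(G_0)/k\le\rho(G_0)$ one gets $\rho_k(G_0)\le k\,\omega(G_0)<\infty$ for every $k$. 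Because $\mathcal B(G_0)$ is an \FF-monoid, hence a \BF-monoid, by Proposition \ref{4.1}, every $L\in\mathcal L(G_0)$ is finite and every $\mathcal U_k(G_0)$ (being a set of positive integers bounded above by $\rho_k(G_0)$) is finite as well.

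For part 1, I would apply the Structure Theorem for Sets of Lengths in the form valid for tame monoids, equivalently for monoids with finite $\omega$-invariant: such monoids admit a uniform bound $M\in\N_0$ for which every set of lengths is an \AAMP\ with bound $M$ and difference lying in the (finite) set $\Delta(G_0)$. The hypotheses of that theorem are exactly tameness and finiteness of $\Delta(G_0)$, both secured in the previous step, so the conclusion transfers verbatim; the difference of the progression is produced in the set of minimal distances, which is contained in $\Delta(G_0)$, giving the stated form.

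For part 2, I would invoke the companion theorem that, whenever $\Delta(H)$ and all elasticities $\rho_k(H)$ are finite, the unions $\mathcal U_k(H)$ are \AAP s with difference $\min\Delta(G_0)$ and a uniform bound $M$; both finiteness requirements were verified above. For the \emph{moreover} statement I would switch to the finitely generated setting: if $G_0$ is finite and $\mathsf D(G_0)<\infty$, then $\mathcal B(G_0)$ is finitely generated by Theorem \ref{3.11}, and for finitely generated monoids one has the sharper result that the functions $k\mapsto\rho_k$ and $k\mapsto\lambda_k$ are eventually quasilinear and the initial and end parts of $\mathcal U_k(G_0)$ repeat periodically; I would cite that directly.

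The key point is that there is essentially no fresh computation here: the content lies in identifying the correct abstract theorems and confirming that their hypotheses hold in our generality. The one genuinely delicate matter, and the thing I would be most careful about, is ensuring that finiteness of $\omega(G_0)$ alone — rather than the a priori stronger properties of being finitely generated, Krull, or a C-monoid, which $\mathcal B(G_0)$ need not possess — already drives the \AAMP/\AAP\ machinery. Once the chain \eqref{basic}–\eqref{basic1} certifies tameness together with finiteness of $\Delta(G_0)$ and of the $\rho_k(G_0)$, the structure theorems apply to the possibly non-finitely-generated, non-Krull monoid $\mathcal B(G_0)$ without modification.
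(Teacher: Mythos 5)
Your proposal is correct and follows essentially the same route as the paper: both parts are obtained by checking, via \eqref{basic}, \eqref{basic1} and Proposition \ref{4.1}, that $\omega(G_0)<\infty$ supplies tameness, finiteness of $\Delta(G_0)$, and (for the moreover statement) finite generation with accepted elasticity, and then citing the abstract structure theorems for sets of lengths, for their unions, and for the periodicity of the initial and end parts. The one imprecision is that the unions theorem requires slightly more than finiteness of $\Delta(G_0)$ and of each $\rho_k(G_0)$ --- it needs the uniform control furnished by tameness or by accepted elasticity --- but since you derive tameness from $\omega(G_0)<\infty$ in your first step, nothing is actually missing.
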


\begin{proof}
1. This follows from Proposition \ref{4.1} and from \cite[Theorem 5.1]{Ge-Ka10a}.

2. The first statement  follows from  \cite[Theorems 3.5 and 4.2]{Ga-Ge09b}. Suppose that $G_0$ is finite and $\mathsf D (G_0) < \infty$. Then Proposition \ref{4.1} implies that $\omega (G_0) < \infty$ and that the elasticity $\rho (G_0)$ is accepted. Therefore,    the statement follows from \cite[Theorem 1.2]{Tr19a}.
\end{proof}

For every positive integer $d \in \N$, there are a group $G$ and a subset $G_0 \subseteq G$ with $\min \Delta (G_0)=d$. Furthermore, the initial and end parts of the sets $\mathcal U_k (G_0)$ are non-trivial. Our next goal is to show that if $G_0=G$ is the whole group, then $\min \Delta (G)=1$ and the initial and end parts are empty.

\smallskip
\begin{lemma}\label{4.4}
If $G$ is an infinite group, then $\mathcal B(G)$ has atoms of every length. In particular, $\mathsf D(G)=\infty$.
\end{lemma}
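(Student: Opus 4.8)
The plan is to show that for every length $\ell \in \N$, the monoid $\mathcal B(G)$ contains an atom of length $\ell$; the ``in particular'' assertion about $\mathsf D(G) = \infty$ then follows immediately from the definition of the large Davenport constant as the supremum of atom lengths. Since $G$ is infinite, I would first split into two cases according to whether $G$ contains an element of infinite order or is an infinite torsion group. The guiding idea is to construct, for each case, an explicit product-one sequence of prescribed length that is \emph{product-one free} in the strong sense needed for irreducibility: no proper nonempty subsequence has product one.

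In the case where there exists $g \in G$ with $\ord(g) = \infty$, the natural candidate is a sequence built from a single element of infinite order together with its inverse, arranged so that cancellation can only happen all at once. For instance, I would consider sequences of the shape $g^{[n]} \bdot (g^{-1})^{[n]}$ or, to force a prescribed parity/length, variants such as $g^{[\ell-1]} \bdot (g^{-(\ell-1)})$, and verify two things: that $1_G \in \pi(S)$ (so $S \in \mathcal B(G)$), and that no proper subsequence lies in $\mathcal B(G)$. The latter uses that $g$ has infinite order, so that a subsequence contributes the identity to the product set only if the exponents of $g$ and $g^{-1}$ appearing in it balance exactly; choosing the element values to make the only balancing configuration the full sequence gives irreducibility. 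By letting the multiplicities range over all positive integers, I obtain atoms of every length.

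In the infinite torsion case no single element has infinite order, so I would instead exploit the infinitude of $G$ to find elements of arbitrarily large order, or infinitely many distinct elements, and assemble them into a long irreducible relation. A clean approach is to take distinct elements $g_1, \ldots, g_{\ell-1} \in G$ and set $g_\ell = (g_1 \cdots g_{\ell-1})^{-1}$, so that $S = g_1 \bdot \ldots \bdot g_\ell$ satisfies $1_G \in \pi(S)$; by choosing the $g_i$ generically (using that $G$ is infinite to avoid all the finitely many ``coincidental'' product-one relations among proper subsequences) one ensures $S$ is an atom of length $\ell$. Alternatively, picking $g$ of large finite order $n$ and using $g^{[n]}$ gives atoms of length $n$ for arbitrarily large $n$, which already forces $\mathsf D(G) = \infty$; combined with the infinite-order construction this covers the ``in particular'' claim, though getting \emph{every} length in the torsion case is what requires the generic-choice argument.

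The main obstacle I anticipate is verifying irreducibility rather than membership in $\mathcal B(G)$: membership only requires exhibiting one ordering with product one, whereas atomicity requires ruling out product-one among \emph{all} proper nonempty subsequences under \emph{all} orderings. In the infinite-order case this is controlled by the torsion-free bookkeeping of exponents of $g$, which is transparent. In the torsion case, the delicate point is to guarantee that the generic choice of $g_1, \ldots, g_{\ell-1}$ genuinely avoids every proper product-one subsequence; here I would argue that each potential short relation imposes a constraint cutting $G$ down to a proper subset (or a coset/finite union thereof), and since $G$ is infinite these finitely many constraints cannot exhaust $G$, leaving valid choices. This finiteness-of-constraints-versus-infinitude-of-$G$ dichotomy is the crux of the argument.
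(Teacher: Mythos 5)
Your proposal is correct, and its crux --- choosing the terms so that the finitely many product-one constraints coming from proper subsequences are all avoided, which is possible precisely because $G$ is infinite --- is exactly the mechanism of the paper's proof, which iteratively picks $g_{j+1}$ with $g_{j+1}^{-1}\notin\pi(T)$ for every subsequence $T$ of $g_1\bdot\ldots\bdot g_j$ and then closes the sequence with $h_m=g_1^{-1}\cdots g_m^{-1}$ to get an atom of length $m+1$. The only real difference is that the paper runs this single construction uniformly for all infinite groups, so your case distinction between infinite-order and torsion elements is unnecessary (though your explicit atoms $g^{[\ell-1]}\bdot g^{-(\ell-1)}$ in the infinite-order case are a fine shortcut).
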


\begin{proof}
Suppose $G$ is infinite. Then there exists an infinite sequence $(g_i)_{i=1}^{\infty}$ with terms from $G$
	such that $g_1\neq 1_G$ and for every $j\in \N$, we have that $g_{j+1}^{-1}\not\in \pi(S^{(j)})$, where $S^{(j)}$ ranges over all subsequences of $g_1\bdot\ldots\bdot g_j$. It follows that $g_1\bdot\ldots\bdot g_j$ has no product-one subsequence for every $j\in \N$.
	Note that $1_G$ is an atom of length $1$. Let $m\in \N$, $h_m=g_1^{-1}\ldots g_m^{-1}$, and  $W_m=g_1\bdot\ldots\bdot g_m\bdot h_m$. Then $W_m$ is a product-one sequence of length $m+1$. It suffices to show $W_m$ is an atom.
 Suppose $W=V_1\bdot V_2$ with $h_m\in \supp(V_2)$, where $V_1,V_2\in \mathcal B(G)$. Then $V_1$ is a subsequence of $g_1\bdot\ldots\bdot g_m$ and hence $V_1$ is empty. It follows that $W$ is an atom of length $m+1$ and we are done.
\end{proof}

The proof of the first part of the first statement in the following theorem runs along the same lines as in \cite[Theorem 5.5]{Oh20a}, whose origin is \cite[Theorem 3.1.3]{Ge09a} in the abelian setting. Since the proof is not long, we give it.

\smallskip
\begin{theorem} \label{main2}
Let $G$ be a group.
\begin{enumerate}
\item Let $k \in \N_{\ge 2}$.  We have $\mathcal U_k(G) = [\lambda_k(G),\rho_k(G)]$. Furthermore, if $G$ is infinite, then $\mathcal U_k (G) = \N_{\ge 2}$.

\item If $G$ has an element of infinite order or the orders of the abelian subgroups are unbounded, then $\mathcal L(G)=\{L\subseteq \N_{\geq 2}\colon L \text{ finite and nonempty}\}\cup \{\{0\},\{1\}\}$ and hence $\Delta(G)=\N$.
\end{enumerate}
\end{theorem}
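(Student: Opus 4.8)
The plan is to treat the two parts separately, leaning on the cited template for the interval statement and on a divisor-closed reduction to abelian groups for the realization statement.

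\textbf{Part 1, interval property.} For the equality $\mathcal U_k(G) = [\lambda_k(G),\rho_k(G)]$ I would transfer the argument of \cite[Theorem 5.5]{Oh20a} (originating in \cite[Theorem 3.1.3]{Ge09a}) essentially verbatim. That proof only manipulates product-one sequences combinatorially: it uses the length-one prime $1_G$ to shift factorization lengths and trades a single long atom against a product of shorter atoms in order to change a realized length by one step, and it never invokes finiteness of $G$. Hence it delivers the interval property for an arbitrary group $G$ and every $k\in\N_{\ge 2}$. Note also that for $k\ge 2$ one has $\mathcal U_k(G)\subseteq\N_{\ge 2}$, since any $L\in\mathcal L(G)$ containing $k\ge 2$ can contain neither $0$ (only the empty sequence) nor $1$ (only an atom), so in particular $\lambda_k(G)\ge 2$.

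\textbf{Part 1, infinite case.} Assuming the interval property, it suffices to show $\lambda_k(G)=2$ and $\rho_k(G)=\infty$. The key input is Lemma \ref{4.4}: if $G$ is infinite, then $\mathcal B(G)$ has atoms of every length. Given an atom $U=u_1\bdot\ldots\bdot u_L$ with $L\ge 2$, no term equals $1_G$, so $\overline U:=u_1^{-1}\bdot\ldots\bdot u_L^{-1}$ is again an atom (invert and reverse a product-one ordering) and each $u_i\bdot u_i^{-1}$ is a length-two atom. Thus $C:=U\bdot\overline U$ carries both the length-two factorization $U\bdot\overline U$ and the length-$L$ factorization $\bdot_{i=1}^{L}(u_i\bdot u_i^{-1})$, giving $\{2,L\}\subseteq\mathsf L(C)$. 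Taking $L=k$ yields $2\in\mathcal U_k(G)$, so $\lambda_k(G)=2$. For $\rho_k(G)=\infty$, pick for each $L$ such a $C$ with $\{2,L\}\subseteq\mathsf L(C)$ and append $k-2$ copies of the prime $1_G$: since $1_G$ is prime, $\mathsf L(C\bdot 1_G^{[k-2]})=(k-2)+\mathsf L(C)\supseteq\{k,\,L+k-2\}$, so $L+k-2\in\mathcal U_k(G)$ for all $L$. Combining with the interval property gives $\mathcal U_k(G)=[2,\infty)=\N_{\ge 2}$.

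\textbf{Part 2.} The inclusion $\mathcal L(G)\subseteq\{L\subseteq\N_{\ge 2}:L\text{ finite nonempty}\}\cup\{\{0\},\{1\}\}$ is the easy direction: $\mathcal B(G)$ is an \FF-monoid by Proposition \ref{4.1}, so every set of lengths is finite, while the empty sequence gives $\{0\}$, atoms give $\{1\}$, and any other element has all factorization lengths $\ge 2$. For the reverse inclusion, $\{0\}$ and $\{1\}$ are realized by the empty sequence and by $1_G$. For a prescribed finite nonempty $L\subseteq\N_{\ge 2}$ I would reduce to an abelian subgroup and transport along a divisor-closed submonoid: if $H\subseteq\mathcal B(G)$ is divisor closed then $\mathsf L_H(a)=\mathsf L_{\mathcal B(G)}(a)$, so $\mathcal L(H)\subseteq\mathcal L(G)$. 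By Lemma \ref{3.2}, $\mathcal B(A)$ is divisor closed in $\mathcal B(G)$ for every subgroup $A\le G$. If $G$ has an element $g$ of infinite order, apply Kainrath's realization theorem to the infinite abelian group $\langle g\rangle\cong\Z$ to obtain $L\in\mathcal L(\mathcal B(\langle g\rangle))\subseteq\mathcal L(G)$; if $G$ is torsion with abelian subgroups of unbounded order, choose an abelian subgroup $A$ large enough to realize the fixed set $L$ and transport the same way. Finally, $\Delta(G)=\N$ follows because $\{2,2+d\}\in\mathcal L(G)$ for every $d\in\N$, so $d\in\Delta(G)$, while $\Delta(G)\subseteq\N$ by definition.

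\textbf{Main obstacle.} The delicate point is the realization step in the torsion case when no single abelian subgroup is infinite: one must invoke a finite analogue of Kainrath's theorem guaranteeing that a fixed finite set $L\subseteq\N_{\ge 2}$ is a set of lengths over every abelian group of sufficiently large order (not merely a large cyclic group, whose systems of sets of lengths are too rigid), and then match such a group to the subgroups furnished by the hypothesis. Verifying that the hypothesis ``orders of the abelian subgroups are unbounded'' supplies abelian subgroups of the structural size required by this realization result, and confirming the elementary identity $\mathsf L_H(a)=\mathsf L_{\mathcal B(G)}(a)$ for divisor-closed $H$, are the two steps I would carry out in full detail.
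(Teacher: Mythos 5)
Your proposal is correct and follows essentially the same route as the paper: the interval property $\mathcal U_k(G)=[\lambda_k(G),\rho_k(G)]$ via the contraction argument of \cite[Theorem 5.5]{Oh20a} (which indeed uses no finiteness of $G$), the infinite case via Lemma \ref{4.4} and the element $A_k\bdot A_k^{-1}$ with $\{2,k\}\subseteq\mathsf L(A_k\bdot A_k^{-1})$, and Part 2 via divisor-closed submonoids $\mathcal B(A)\subseteq\mathcal B(G)$ for abelian subgroups $A$ combined with Kainrath's theorem in the non-torsion case and, in the torsion case, exactly the realization result for infinitely many pairwise non-isomorphic finite abelian groups that you flag as the needed external input and that the paper supplies by citing \cite[Theorem 3.7]{Ge-Sch-Zho17}. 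The only cosmetic difference is that you make the deduction $\rho_k(G)=\infty$ explicit by appending copies of the prime $1_G$, where the paper is terse.
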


\begin{proof}
1. If $\t G\t\leq 5$, then $G$ is an abelian group and the statement is known (either \cite[Theorem 5.5]{Oh20a} or \cite[Theorem 3.1.3]{Ge09a}).  Suppose that $\t G\t\geq 6$. We need to show $[\lambda_k(G),\rho_k(G)]\subseteq \mathcal U_k(G)$ for all $k\in \N$. We assert  $[k,\rho_k(G)]\subseteq \mathcal U_k(G)$ for all $k\in \N$.  Suppose the assertion holds. Let $k\in \N$ and let $l\in[\lambda_k(G),k]$. Then $l\le k\le  \rho_{\lambda_k(G)}(G)\le \rho_l(G)$. It follows by the assertion that $k\in\mathcal U_l(G)$ and consequently $l\in\mathcal U_k(G)$. Therefore $[\lambda_k(G), \rho_k(G)]\subseteq \mathcal U_k(G)$.

Thus we only need to show the assertion. Let $k\in \N$ and let $\ell\in [k, \rho_k(G)]$ be minimal such that $[\ell,\rho_k(G)]\subseteq \mathcal U_k(G)$.  Assume to the contrary that $k<\ell$. Then $\ell-1\not\in \mathcal U_k(G)$ and $k+2\le \ell$. Define
 $$\Omega=\{A\in\mathcal B(G)\colon \text{ there exists }j\in \N \text{ with } j\geq \ell \text{ such that } \{k,j\}\subseteq \mathsf L(A)\}$$
 and choose $B\in\Omega$ such that $\t B\t$ is minimal. Then $B=U_1\bdot\ldots\bdot U_k=V_1\bdot\ldots\bdot V_t$, where $t\ge \ell\ge k+2$ and $U_1,\ldots,U_k,V_1,\ldots,V_t\in\mathcal A(G)$ and where the order is such that all commonly occuring terms occure at the beginning, i.e. $U_i=V_i$ say for $i\in [1,a]$ but $U_i\neq V_j$ for any $i>a$ and $j>a$. Then, since $l>k$, we cannot have $a=k$. There must exist $i\in [a+1,k]$, say $i=a+1$, such that $|U_{a+1}|\ge 2$. Suppose $U_{a+1}=g_1\bdot\ldots\bdot g_{l}$ with $1=g_1g_2\ldots g_l$, where $l\in \N_{\ge 2}$ and $g_1,\ldots,g_l\in G$. Then there exists $j\in [a+1,t]$, say $j=a+1$, such that $g_1\in \supp(V_{a+1})$. Let $x\in [1,l]$ be maximal such that $g_1\bdot\ldots\bdot g_x$ is a subsequence of $V_{a+1}$. Then $x<l$ and there exists $r\in [a+2,t]$, say $r=a+2$, such that $g_{x+1}\in \supp(V_{a+2})$. Therefore $U_{a+1}'=U_{a+1}\bdot (g_xg_{x+1})\bdot (g_x\bdot g_{x+1})^{[-1]}$ is an atom and $V'=V_{a+1}\bdot V_{a+2}\bdot (g_xg_{x+1})\bdot (g_x\bdot g_{x+1})^{[-1]}\in \mathcal B(G)$. Set $B'=B\bdot (g_xg_{x+1})\bdot (g_x\bdot g_{x+1})^{[-1]}$. Then $|B'|<|B|$ and
 \[
 t-2+\mathsf L(V')\cup \{k\}\subseteq \mathsf L(B').
 \]
 Then the minimality of $|B|$ implies that $B'\not\in \Omega$, whence $V'\in \mathcal A(G)$  and $t=\ell$. It follows that $\{k,\ell-1\}\subseteq \mathsf L(B')$ and hence $\ell-1\in \mathcal U_k(G)$, a contradiction.

To show the "furthermore" statement, let $G$ be infinite. It follows by Lemma \ref{4.4} that for every $k\ge 2$ there is an atom $A_k$ of length $k$.
Suppose $A_k=h_1\bdot\ldots\bdot h_{k}$ and $A_k^{-1}=h_1^{-1}\bdot \ldots\bdot h_k^{-1}$, where $k\in \N$ and $h_1,\ldots,h_k\in G$.
Thus $\{2,k\}\subseteq\mathsf L(A_k\bdot A_k^{-1})$. Therefore $\rho_2(G)=\infty$ and $2\in \mathcal U_k(G)$ for every $k\ge 2$, whence $\rho_k(G)=\infty$ and  $\lambda_k(G)=2$. The assertion follows by the main statement.

2. If $G$ contains an element of infinite order, then $\mathcal L(\Z)\subseteq \mathcal L(G)$ and the assertion follows by Kainrath's Theorem (\cite[Theorem 1]{Kai99a}, \cite[Theorem 7.4.1]{Ge-HK06a}). If $G$ contains abelian subgroups of unbounded order, then $\mathcal L(G)$ contains the systems of sets of lengths of infinitely many pairwise non-isomorphic finite abelian groups and the assertion follows by
\cite[Theorem 3.7]{Ge-Sch-Zho17}.
\end{proof}

\smallskip
\begin{remark}
The proof of the second statement of Theorem \ref{main2} relies heavily on the existence of arbitrary large abelian subgroups, since only then we can use the already known results on sets of lengths. The case that remains open is the one where $G$ is infinite and does not contain arbitrarily large abelian groups, i.e. $G$ is a torsion group and there are only finitely many pairwise non-isomorphic finite abelian subgroups. Such groups exist; indeed there exist infinite groups such that every subgroup is a cyclic group of order a fixed prime; the latter groups are called \textit{Tarski monsters}.
%If we would have the result, that for every infinite group $G$ $\mathcal L(G)=\{L\subseteq \N_{\geq 2} \mid L\text{ finite}\}\cup \{\{0\},\{1\}\}$, then we could prove that for every finite $L\subseteq \N_{\geq 2}$ there are only finitely many pairwise non-isomorphic groups such that $L$ is not realized as a set of lengths in their monoids of product-one sequences.
\end{remark}

\section{On infinite dihedral groups}\label{5}

Product-one sequences over finite dihedral groups received considerable attention in the literature (see \cite{Ge-Gr13a, Gr13b, Cz-Do14a, Br-Ri18a, Oh-Zh20a, Oh-Zh20b, G-G-O-Z21a, Zha21a, Zh21b}). In this final section we consider infinite dihedral groups, and in this setting we can extend and refine the algebraic and arithmetic results of the previous two sections. We formulate our two main results (Theorems \ref{5.1} and \ref{5.2}).

\smallskip
\begin{theorem} \label{5.1}
	Let $G$ be an infinite dihedral group, say $G = \langle \alpha, \tau \colon \tau^2=1, \alpha \tau = \tau \alpha^{-1} \rangle$, and let $G_0\subseteq G$ be a finite nonempty subset.

\begin{enumerate}
\item  $\mathcal B(G_0)$ is a $v$-noetherian G-monoid, $(\mathcal B(G_0):\widehat{\mathcal B(G_0)})\neq\emptyset$, and $\widehat{\mathcal B(G_0)}=\mathcal B(G_0)^{\ast}$ is a finitely generated Krull monoid. Moreover, if $\langle G_0\rangle \cong G$, then $\mathcal C(\mathcal B(G_0)^{\ast})$ is isomorphic to a subgroup of $G/G'\cong C_2\times C_2$.

\item  $\mathcal B(G_0)$ is weakly Krull if and only if one of the following holds.
\begin{itemize}
	\item $G_0\subseteq \langle \alpha\rangle$.
	\item $G_0\setminus\{1\}\subseteq \langle \alpha \rangle\tau$ with $|G_0|\le 3$.

	\item  $G_0\setminus\{1\}=\{\alpha^k\tau\}\cup \{\alpha^i\colon i\in I\}\cup \{\alpha^{-j}\colon j\in J\}$, where $k\in \Z$ and $I,J$ are  nonempty sets of positive integers  such that there exist pairwise co-prime positive integers $b_i$, for $i\in I\cup J$, such that $kb_k=\gcd(I\cup J)\prod_{i\in I\cup J}b_i$ for every $k\in I\cup J$. In particular, the above property holds for $G_0=\{\alpha^i,\alpha^{-j},\alpha^k\tau\}$, where $i,j\in \N$ and $k\in \Z$.

\end{itemize} 

\item %We have $\omega(G_0)<\infty$ if and only if  $\mathsf D(G_0)<\infty$ if and only if  $G_0\subseteq \langle \alpha\rangle$ or $G_0\subseteq \langle \alpha\rangle\tau$.
The following statements are equivalent.
\begin{enumerate}
	\item[(a)] $\mathcal B(G_0)$ is tame.
	\item[(b)] $\omega(G_0)<\infty$.
	\item[(c)] $\mathsf D(G_0)<\infty$.
	\item[(d)] $G_0\subseteq \langle \alpha\rangle$ or $G_0\subseteq \langle \alpha\rangle\tau\cup\{1\}$.
	\item[(e)] $\mathcal B(G_0)$ is finitely generated.
\end{enumerate}

\item The following statements are equivalent.
      \begin{enumerate}
      \item[(a)] $\mathcal B(G_0)$ is locally tame.
      \item[(b)] $\rho(G_0)<\infty$.
      \item[(c)] $\rho_k (G_0) < \infty$ for all $k \ge 2$.
      \item[(d)] $\rho(G_0)$ is accepted.
      \item[(e)] $\mathcal B(G_0\cap \langle \alpha \rangle)\subseteq \{1_{\mathcal B(G_0)}, 1_G^{[n]}:n\geq 0\}$ or $G_0\subseteq \langle \alpha\rangle$.
      \end{enumerate}

\item The set of distances $\Delta (G_0)$ is finite, $\mathsf c (G_0) < \infty$, and there is $M \in \N_0$ such that, for all sufficiently large $k \in \N$,  $\mathcal U_k (G_0)$ is an \AAP \ with difference $\min \Delta (G_0)$ and bound $M$.
\end{enumerate}
\end{theorem}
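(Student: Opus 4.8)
The plan is to read off all three assertions from the structural facts collected in part (1): $\mathcal B(G_0)$ is a $v$-noetherian G-monoid, its complete integral closure $\widehat{\mathcal B(G_0)}=\mathcal B(G_0)^{\ast}$ is a finitely generated Krull monoid, and the conductor $\mathfrak f=(\mathcal B(G_0):\widehat{\mathcal B(G_0)})$ is nonempty. I first observe that the finiteness of $\Delta(G_0)$ is not an independent task: by \eqref{basic} we have $\sup\Delta(G_0)\le\mathsf c(G_0)$, and since $\Delta(G_0)\subseteq[1,\sup\Delta(G_0)]$, any bound on the catenary degree already forces $\Delta(G_0)$ to be finite. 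So everything reduces to bounding $\mathsf c(G_0)$ and to describing the unions $\mathcal U_k(G_0)$, and it is natural to split according to whether $\mathsf D(G_0)$ is finite, which by part (3) together with Theorem \ref{3.11} is exactly the dichotomy ``$\mathcal B(G_0)$ finitely generated'' versus ``not''.

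In the regime $\mathsf D(G_0)<\infty$ (equivalently $G_0\subseteq\langle\alpha\rangle$ or $G_0\subseteq\langle\alpha\rangle\tau\cup\{1\}$) the statement is soft: $\mathcal B(G_0)$ is finitely generated by Theorem \ref{3.11}, so Proposition \ref{4.1} gives $\omega(G_0)<\infty$, whence $\mathsf c(G_0)\le\omega(G_0)<\infty$ by \eqref{basic} and $\Delta(G_0)$ is finite by the remark following \eqref{basic1}. Since here $G_0$ is finite with $\mathsf D(G_0)<\infty$ and $\omega(G_0)<\infty$, Proposition \ref{4.2}.2 applies directly and yields the asserted \AAP-description of $\mathcal U_k(G_0)$ (in fact for every $k$, with periodically repeating initial and end parts).

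The genuine case is $\mathsf D(G_0)=\infty$, i.e. $G_0$ simultaneously contains a rotation $\alpha^{i}$ with $i\neq 0$ and a reflection $\alpha^{k}\tau$; note that then $\langle G_0\rangle\cong G$. Now $\omega(G_0)=\infty$ and, by part (4), the elasticity may be infinite, so $\mathcal B(G_0)$ is not a C-monoid---its class group is finite (it embeds into $G/G'\cong C_2\times C_2$ by part (1)), but its reduced class semigroup is infinite---and the inequalities in \eqref{basic} give nothing. Here I would run the conductor reduction relative to $\widehat{\mathcal B(G_0)}$: fixing $c\in\mathfrak f$, every product-one sequence lying in the ideal $c\,\widehat{\mathcal B(G_0)}$ refactors essentially as in the finitely generated Krull monoid $\widehat{\mathcal B(G_0)}$, whose catenary degree is finite, while the finitely many classes modulo the conductor contribute only a bounded correction; transporting factorizations back and forth across $\mathfrak f$ and estimating the factorization distances introduced yields a finite bound for $\mathsf c(G_0)$, and hence finiteness of $\Delta(G_0)$. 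Making this bookkeeping precise for a monoid that is neither finitely generated nor of finite elasticity is the main obstacle, and it is exactly the point at which the two inputs from part (1)---$\widehat{\mathcal B(G_0)}$ finitely generated and $\mathfrak f\neq\emptyset$---are indispensable; general estimates of this type are available in \cite{Ge-HK06a}.

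It remains to describe $\mathcal U_k(G_0)$ when $\mathsf D(G_0)=\infty$. Having secured that $\Delta(G_0)$ is finite, I would invoke the structure theorem for unions of sets of lengths (\cite{Ga-Ge09b, Tr19a}) to conclude that, for all sufficiently large $k$, $\mathcal U_k(G_0)$ is an \AAP\ with difference $\min\Delta(G_0)$ and a bound $M$ that can be taken independent of $k$. Two shapes occur: if the elasticity is finite the $\mathcal U_k(G_0)$ are finite \AAP s, whereas if $\rho(G_0)=\infty$ the central part is an infinite arithmetical progression with difference $\min\Delta(G_0)$ starting at $\lambda_k(G_0)$, up to the bounded end corrections, exactly as in the whole-group computation of Theorem \ref{main2}; in either case this is an \AAP\ of the required form. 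The delicate point is the uniformity of $M$ in $k$, which again rests on the finite catenary degree together with the control of $\lambda_k(G_0)$ and $\rho_k(G_0)$ afforded by the finitely generated closure.
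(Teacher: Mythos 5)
Your proposal only addresses part 5 of the theorem, taking parts 1--4 as given inputs; I will judge it on that basis. The easy half is fine and matches the paper: when $\mathsf D(G_0)<\infty$ the monoid is finitely generated, so $\omega(G_0)<\infty$ and everything follows from \eqref{basic} and Proposition \ref{4.2}; and once $\mathsf c(G_0)<\infty$ is known in general, the finiteness of $\Delta(G_0)$ and the \AAP-structure of $\mathcal U_k(G_0)$ do follow as you say (the paper splits according to whether some $\rho_k(G_0)$ is infinite, using Theorem \ref{5.1}.4 to get acceptedness of $\rho(G_0)$ in the finite case before invoking \cite{Ga-Ge09b, Tr19a}; your sketch of this is essentially the same, if less precise).

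The genuine gap is the step you yourself identify as "the main obstacle": finiteness of $\mathsf c(G_0)$ when $G_0$ contains both a nontrivial rotation and a reflection. There is no general estimate in \cite{Ge-HK06a} of the kind you invoke. The results there that transport catenary-degree bounds across a conductor (e.g.\ Theorems 3.3.4 and 4.6.6) require finiteness of the reduced class semigroup (C-monoids) or local tameness / finite $\omega$-invariants, and all of these fail here: by parts 3 and 4, $\omega(G_0)=\infty$ and $\mathcal B(G_0)$ need not even be locally tame (Example \ref{infdih}). Knowing only that $\mathcal B(G_0)$ is $v$-noetherian with nonempty conductor to a finitely generated Krull monoid does not yield a finite catenary degree, and "transporting factorizations back and forth across $\mathfrak f$" has no off-the-shelf formulation in this generality --- which is precisely why the paper does not argue this way. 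Instead, the paper's proof introduces the auxiliary finitely generated monoid $D=[\mathcal A(G_0)\cup\{g^{[2]}\colon g\in G_0\cap\langle\alpha\rangle\}]$ from Equation \eqref{equation:D}, shows $\mathcal A(D)$ is finite (Claim B in the proof of part 1), and then runs a minimal-counterexample induction on $|B|$ with explicit chain constructions (Claims A--D in the proof of part 5), using the Davenport constant of $C_2^{|G_1|}$ to control how squares $g^{[2]}$ of rotations can be regrouped; this produces the explicit bound $\mathsf c(G_0)\le 4\mathsf t(D)+2+2|G_1|+2N$. None of that combinatorial content is present in, or replaceable by, your conductor argument, so the central claim of part 5 remains unproved. (A smaller inaccuracy: it is not true that $\mathsf D(G_0)=\infty$ forces $\mathcal B(G_0)$ to fail to be a C-monoid --- Example \ref{taualpha} with $G_0=\{\alpha,\tau\}$ is a C-monoid with $\mathsf D(G_0)=\omega(G_0)=\infty$ --- though this side remark is not load-bearing.)
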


\smallskip
\begin{theorem} \label{5.2}
Let $G$ be an infinite dihedral group.
\begin{enumerate}
\item $\mathcal B (G)$ is neither seminormal nor weakly Krull.

\item $\mathcal L (G) = \{ L \subseteq \N_{\ge 2} \colon L \ \text{is finite and nonempty } \} \cup \big\{ \{0\}, \{1\} \big\}$, whence $\Delta (G) = \N$ and $\mathcal U_k (G) = \N_{\ge 2}$ for all $k \ge 2$.

\item $\mathcal B (G)$ is an \FF-monoid, but not locally tame, and $\mathsf c (G) = \omega (G) = \infty$.
\end{enumerate}
\end{theorem}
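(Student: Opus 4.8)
The plan is to read off parts (1) and (2) directly from the general theory already in place, and to reserve the real work for the failure of local tameness in part (3). Throughout I would use that in $G=\la \alpha,\tau\ra$ the commutator subgroup is $G'=\la \alpha^2\ra$, which is infinite cyclic, so that $G$ is non-abelian and $|G'|=\infty$. For part (1), Theorem \ref{3.14}.1 gives the equivalence of abelianness with being weakly Krull, so $\mathcal B(G)$ is not weakly Krull; and Theorem \ref{3.14}.2 shows $\mathcal B(G)$ is seminormal if and only if $|G'|\le 2$, so $\mathcal B(G)$ is not seminormal. For part (2), since $\alpha$ has infinite order, $G$ has an element of infinite order, and Theorem \ref{main2}.2 yields the stated description of $\mathcal L(G)$ together with $\Delta(G)=\N$; since $G$ is infinite, Theorem \ref{main2}.1 gives $\mathcal U_k(G)=\N_{\ge 2}$ for all $k\ge 2$.

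For part (3), that $\mathcal B(G)$ is an \FF-monoid is immediate from Proposition \ref{4.1}, as the whole group is condensed. Part (2) supplies $\sup\Delta(G)=\infty$, and inserting this into the chain \eqref{basic} forces $\mathsf c(G)=\omega(G)=\infty$. The remaining claim, that $\mathcal B(G)$ is \emph{not} locally tame, is the heart of the matter. Here I would exploit \eqref{basic1}: it suffices to produce a single atom $u$ with $\tau(G,u)=\infty$, for then $\mathsf t(G,u)\ge 1+\tau(G,u)=\infty$. The plan is to find such a $u$ inside the divisor-closed submonoid $\mathcal B(\la\alpha\ra)\subseteq\mathcal B(G)$ (divisor closed by Lemma \ref{3.2}.1, and $\la\alpha\ra$ condensed), which is a Krull monoid identified with the monoid of zero-sum sequences over $\Z$; since $\la\alpha\ra$ is divisor closed, atoms of $\mathcal B(\la\alpha\ra)$ are atoms of $\mathcal B(G)$ and the minimal factorization length of any element of $\mathcal B(\la\alpha\ra)$ is the same whether computed in $\mathcal B(\la\alpha\ra)$ or in $\mathcal B(G)$. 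Hence any admissible $\tau$-configuration built inside $\mathcal B(\la\alpha\ra)$ transfers verbatim to $\mathcal B(G)$ with the same quotient length.

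Concretely, I would take $u=\alpha\bdot\alpha^{-1}$ and, for each $m\in\N$, the two sequences $A_1=\alpha\bdot(\alpha^2)^{[m]}\bdot\alpha^{-(2m+1)}$ and $A_2=\alpha^{-1}\bdot(\alpha^{-2})^{[m]}\bdot\alpha^{2m+1}$. A short check (using that the unique negative, respectively positive, exponent is the only way to reach a zero exponent sum) shows $A_1$ and $A_2$ are atoms; moreover $u\mid A_1\bdot A_2$ while $u\nmid A_1$ and $u\nmid A_2$, so both atoms are genuinely needed, and $A_1\bdot A_2\bdot u^{[-1]}=(\alpha^2)^{[m]}\bdot(\alpha^{-2})^{[m]}\bdot\alpha^{2m+1}\bdot\alpha^{-(2m+1)}=:R_m$. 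The key computation is that $\min\mathsf L(R_m)=m+1$: identifying $\la\alpha\ra\cong\Z$ via $\alpha^k\mapsto k$, any atom of $R_m$ containing the exponent $2m+1$ must, by parity, also contain $-(2m+1)$ and nothing else (any additional even exponents would form a proper zero-sum subsequence), so it equals $\alpha^{2m+1}\bdot\alpha^{-(2m+1)}$; what remains is $(\alpha^2)^{[m]}\bdot(\alpha^{-2})^{[m]}$, which splits only into the $m$ atoms $\alpha^2\bdot\alpha^{-2}$. Thus $\tau(G,u)\ge m+1$ for every $m$, so $\tau(G,u)=\infty$ and $\mathcal B(G)$ is not locally tame.

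The main obstacle is exactly this final step. Parts (1) and (2) are essentially citations, and the equalities $\mathsf c(G)=\omega(G)=\infty$ follow formally from $\sup\Delta(G)=\infty$ via \eqref{basic}; by contrast, non-local-tameness cannot be seen at the level of $\Delta$, $\mathsf c$, or $\omega$ alone, since those are suprema that may blow up through ever-longer atoms while each individual atom stays tame. The delicate point is therefore to isolate the invariant $\tau$ and to pin down an \emph{unbounded} family of minimal configurations for a \emph{fixed} atom $u$; choosing the exponents of $A_1$ and $A_2$ so that a parity obstruction makes the factorization of the quotient $R_m$ rigid is what makes the length estimate $\min\mathsf L(R_m)=m+1$ clean, and the divisor-closedness of $\mathcal B(\la\alpha\ra)$ is what lets me carry the whole computation in the transparent abelian (zero-sum-over-$\Z$) setting before transporting it to $\mathcal B(G)$.
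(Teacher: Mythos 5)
Your proposal is correct, and parts (1), (2), and the \FF/$\mathsf c$/$\omega$ claims of (3) follow the paper verbatim (citations of Theorem \ref{3.14}, Theorem \ref{main2}, Proposition \ref{4.1}, and the chain \eqref{basic}). Where you genuinely diverge is the failure of local tameness. The paper disposes of this by citing Example \ref{infdih}: the finite subset $G_0=\{\alpha,\alpha^{-1},\tau\}$ generates a divisor-closed submonoid that is not locally tame, which in turn rests on the equivalence (a)$\Leftrightarrow$(e) of Theorem \ref{5.1}.4, whose proof exhibits $\mathsf t(G_0,\tau^{[2]})=\infty$ via configurations mixing rotations and reflections. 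You instead work entirely inside the abelian divisor-closed submonoid $\mathcal B(\la\alpha\ra)\cong\mathcal B(\Z)$ and show $\tau(G,\alpha\bdot\alpha^{-1})=\infty$ directly: your atoms $A_1,A_2$ and the quotient $R_m$ check out (the parity obstruction does force the unique factorization of $R_m$ of length $m+1$), the divisor-closedness of $\mathcal B(\la\alpha\ra)$ in $\mathcal B(G)$ (Lemma \ref{3.2}.1) correctly transports atoms, divisibility, and $\min\mathsf L$ to $\mathcal B(G)$, and \eqref{basic1} then gives $\mathsf t(G,\alpha\bdot\alpha^{-1})=\infty$. Your route is more self-contained --- it avoids the heavy machinery of Theorem \ref{5.1} entirely --- and it proves something slightly stronger and more portable, namely that $\mathcal B(H)$ fails to be locally tame for \emph{any} group $H$ containing an element of infinite order, since $\mathcal B(\Z)$ then sits inside as a divisor-closed submonoid. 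What the paper's route buys instead is that non-local-tameness is already visible over a \emph{finite} generating set $\{\alpha,\alpha^{-1},\tau\}$, which is information your infinitely-supported witness family does not provide. Note that both witnesses have $\omega(\cdot,u)$ finite at the chosen atom (your $u$ has $\omega\le 2$ by Proposition \ref{4.1}.3), so in either approach the blow-up must be detected through $\tau$ rather than $\omega$, exactly as you observe.
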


\smallskip
We introduce notation which remains valid for the remainder of this section. Let $G$ be an infinite dihedral group, say $G = \langle \alpha, \tau \colon \tau^2=1, \alpha \tau = \tau \alpha^{-1} \rangle$, and let $G_0\subseteq G$ be a finite nonempty subset. We start with the proof of Theorem \ref{5.1}, which will be given in five steps. Then we discuss three examples, which show the diversity of the behavior of $\mathcal B (G_0)$ with respect to properties such as seminormality and root closure. The proof of Theorem \ref{5.2} will be given at the very end of Section \ref{5}.

\medskip
\begin{proof}[Proof of Theorem \ref{5.1}.1] Let $S$ be a sequence over $G_0$.
We set $G_1=G_0\cap \langle \alpha \rangle$, $S_{G_1}=\prod_{g\in G_1}g^{[\mathsf v_g(S)]}$,  $G_2=G_0\cap\langle \alpha \rangle \tau$, and $S_{G_2}=\prod_{g\in G_2}g^{[\mathsf v_g(S)]}$. Then $G_0=G_1\cup G_2$ is a partition of $G_0$ and $S=S_{G_1}\bdot S_{G_2}$. Next we define $\varphi\colon \mathcal F(G)\rightarrow \mathcal F(\langle\alpha\rangle)$ via $\varphi(\alpha^i\tau)=\alpha^i$, $\varphi(\alpha^i)=\alpha^i$,  and $\psi\colon \mathcal F(\langle \alpha\rangle)\rightarrow \mathcal F(\Z)$ via $\psi(\alpha^i)=i$. If $G_2=\emptyset$, then $G_0=G_1$ is a subset of a cyclic group $\langle \alpha\rangle$ and there is nothing to do. Now we assume that $G_2\neq \emptyset$.

\medskip
\noindent\textbf{Claim A. }	
{\it For every sequence $S\in \mathcal F(G_0)$ such that $|S_{G_2}|\ge 2$ is even, we have that $S\in \mathcal B(G_0)$  if and only if $S$ can be written as $S=T_1\bdot T_2\bdot W_1\bdot W_2$ such that $\sigma(\psi(T_1\bdot \varphi(W_1)))=\sigma(\psi(T_2\bdot \varphi(W_2)))$, where $T_1\bdot T_2=S_{G_1}$ and $W_1\bdot W_2=S_{G_2}$ with $|W_1|=|W_2|$.}

\begin{proof}[Proof of Claim A]
Let $S=f_1\bdot\hdots \bdot f_n\in \mathcal B(G_0)$ with $f_1\ldots f_n=1$ and $|S_{G_2}|\ge 2$ even. We set $\beta=\psi\circ\varphi$ and $\chi(f_i)=|\{f_j\colon j<i, f_j\in G_2\}|$. Then $$1=f_1\ldots f_n=\alpha^{(-1)^{\chi(f_1)}\beta(f_1)+(-1)^{\chi(f_2)}\beta(f_2)+\hdots +(-1)^{\chi(f_n)}\beta(f_n)}\,,$$ and hence $\sum\limits_{\chi(f_i) \text{ even}}\beta(f_i)=\sum\limits_{\chi(f_j) \text{ odd}}\beta(f_j)$. Now the statement follows by defining
\begin{equation*}
T_1=\prod_{\substack{f_i\in G_1\\ \chi(f_i) \text{ even}}}f_i, \ \ \ T_2=\prod_{\substack{f_i\in G_1\\ \chi(f_i) \text{ odd}}}f_i, \ \ \
W_1=\prod_{\substack{f_i\in G_2\\ \chi(f_i) \text{ even}}}f_i, \ \ \
W_2=\prod_{\substack{f_i\in G_2\\ \chi(f_i) \text{ odd}}}f_i, \ \ \
\end{equation*}

Conversely, suppose $S$ has such a decomposition with $W_1=w_1^{(1)}\bdot\hdots \bdot w_n^{(1)}$ and $W_2=w_1^{(2)}\bdot\hdots \bdot w_n^{(2)}$, where $n\ge 1$, $w_1^{(1)},\ldots, w_n^{(1)}, w_1^{(2)}, \ldots, w_n^{(2)}\in G_2$. Then
 $$T_1\bdot w_1^{(1)}\bdot T_2\bdot w_1^{(2)}\bdot  w_2^{(1)}\bdot w_2^{(2)}\bdot\hdots \bdot w_n^{(1)}\bdot w_n^{(2)}\in\mathcal B(G_0)\,.$$
\qedhere[Proof of Claim A]
\end{proof}

Let
\begin{equation}\label{equation:D}
D=[\mathcal A(G_0)\cup \{g^{[2]}\colon g\in G_1\}]\subseteq \mathcal F(G_0)
\end{equation} 	
be the submonoid generated by $\mathcal B(G_0)\cup \{g^{[2]}\colon g\in G_1\}$.
 It is easy to see that  $\{x\in D\colon \supp(x)\cap G_2\neq \emptyset\}\subseteq \mathcal B(G_0)$ and that $\mathcal A(D)\subseteq A(G_0)\cup \{g^{[2]}\colon g\in G_1\}$.

\medskip
\noindent \textbf{Claim B:} \textit{$\mathcal A(D)$ is finite.}
\begin{proof}[Proof of Claim B]
 To begin with, we first show $\sup \{\mathsf v_g(A)\colon A\in \mathcal A(D)\}$  is finite for every $g\in G_2$.
  Assume to the contrary that
 there exist $g_1\in G_2$ and a sequence $(A_i)_{i=1}^{\infty}$ of atoms of $D$ with $|\mathsf v_{g_1}(A_i)|\ge 2$ such that $\lim\limits_{i\rightarrow \infty}\mathsf v_{g_1}(A_i)=\infty$. Furthermore, since $G_0$ is finite, we may assume that $(\mathsf v_g(A_i))_{i=1}^{\infty}$ is increasing (maybe not strictly) for every $g\in G_0$.
 Then $A_i\in \mathcal B(G_0)$ for all $i\in \N$ and for each $i$, we fix $T_1^{(i)}, T_2^{(i)}, W_1^{(i)}, W_2^{(i)}$ such that $A_i$ can be written as $A_i=T_1^{(i)}\bdot T_2^{(i)}\bdot W_1^{(i)}\bdot W_2^{(i)}$, $\sigma(\psi(T_1^{(i)}\bdot \varphi(W_1^{(i)})))=\sigma(\psi(T_2^{(i)}\bdot \varphi(W_2^{(i)})))$,  $T_1^{(i)}\bdot T_2^{(i)}=(A_i)_{G_1}$, and $W_1^{(i)}\bdot W_2^{(i)}=(A_i)_{G_2}$ with $|W_1^{(i)}|=|W_2^{(i)}|$ and $g_1\in \supp(W_1^{(i)})$. Since $\lim\limits_{i\rightarrow \infty}\mathsf v_{g_1}(A_i)=\infty$ it follows that $\lim\limits_{i\rightarrow\infty} |W_1^{(i)}|=\infty$ and  $\lim\limits_{i\rightarrow\infty} |W_2^{(i)}|=\infty$. If $\supp(W_1^{(i)})\cap \supp(W_2^{(i)})\neq\emptyset$, say $h\in \supp(W_1^{(i)})\cap \supp(W_2^{(i)})$, then $h^{[2]}$ and $A_i\bdot (h^{[2]})^{[-1]}$ are both product-one sequences by Claim A, a contradiction. Thus $\supp(W_1^{(i)})\cap \supp(W_2^{(i)})=\emptyset$. Therefore there exists $g_2\in G_2$ such that $\lim\limits_{i\rightarrow \infty}\mathsf v_{g_2}(W_2^{(i)})=\infty$.
 After reordering if necessary, we may assume that $g_1=\alpha^r\tau$ and $g_2=\alpha^s\tau$ with $r<s$ such that $g_1\in\supp (W_1^{(i)})$, $g_2\in \supp(W_2^{(i)})$, and 
 \begin{align*}
 r=&\min\{y\colon g=\alpha^y\tau \text{ such that }\lim\limits_{i\rightarrow \infty}\mathsf v_{g}(W_1^{(i)})=\infty\}\,,\\
\text{and}\quad s=&\min\{y\colon g=\alpha^y\tau \text{ such that }\lim\limits_{i\rightarrow \infty}\mathsf v_{g}(W_2^{(i)})=\infty\}\,.
\end{align*}
 	
Suppose $ \max\{y\in \Z\colon g=\alpha^y\tau \text{ such that }\lim\limits_{i\rightarrow \infty}\mathsf v_{g}(W_1^{(i)})=\infty\}>
s$. Then there exists $g_3=\alpha^k\tau\in G_2\setminus\{g_1,g_2\}$ with $k>s$ and $\lim\limits_{i\rightarrow \infty}\mathsf v_{g_3}(W_1^{(i)})=\infty$.
Let $T=g_1^{[k-s]}\cdot g_2^{[k-r]}\cdot g_3^{[s-r]}$. Then $T$ and $A_i\bdot T^{[-1]}$ are both product-one sequences by Claim A, a contradiction, where $i$ is large enough.

Suppose $ \max\{y\in \Z\colon g=\alpha^y\tau \text{ such that }\lim\limits_{i\rightarrow \infty}\mathsf v_{g}(W_1^{(i)})=\infty\}<
s$. It follows by $|W_1^{(i)}|=|W_2^{(i)}|\to\infty$ that 
 $\lim\limits_{i\rightarrow \infty} \sigma(\psi(\varphi(W_2^{(i)})))-\sigma(\psi(\varphi(W_1^{(i)})))=\infty$.
   Then $\lim\limits_{i\rightarrow \infty} \sigma(\psi(T_1^{(i)}))-\sigma(\psi(T_2^{(i)}))=\infty$. Then there must exist $g=\alpha^k\in G_1$  such that either $\lim\limits_{i\rightarrow \infty}\mathsf v_{g}(T_1^{(i)})=\infty$ (if $k>0$) or $\lim\limits_{i\rightarrow \infty}\mathsf v_{g}(T_2^{(i)})=\infty$ (if $k<0$). Let $T=g_1^{[|k|]}\cdot g_2^{[|k|]}\cdot g^{[s-r]}$. Then $T$ and $A_i\bdot T^{[-1]}$ are both product-one sequences by Claim A, a contradiction, where $i$ is large enough.

   Thus, we proved   $\sup \{\mathsf v_g(A)\colon A\in \mathcal A(D)\}$  is finite for every $g\in G_2$. Next,  we show $\sup \{\mathsf v_g(A)\colon A\in \mathcal A(D)\}$  is finite for every $g\in G_1$. Note that $\sup \{\mathsf v_g(A)\colon A\in \mathcal A(D)\cap \mathcal F(G_1) \}\leq\mathsf D(G_1)$ is finite.  It is sufficient to show $\sup \{\mathsf v_g(A)\colon A\in \mathcal A(D) \text{ with } \supp(A)\cap G_2\neq \emptyset\}$  is finite for every $g\in G_1$.
 Assume to the contrary that
there exist $g_1\in G_1$ and a sequence $(A_i)_{i=1}^{\infty}$ of atoms of $D$ with $\supp(A_i)\cap G_2\neq \emptyset$ such that $\lim\limits_{i\rightarrow \infty}\mathsf v_{g_1}(A_i)=\infty$.  Furthermore, since $G_0$ is finite, we may assume that $(\mathsf v_g(A_i))_{i=1}^{\infty}$ is increasing (maybe not strictly) for every $g\in G_0$. Then $A_i\in \mathcal B(G_0)$ for all $i\in \N$  and for each $i$, we fix $T_1^{(i)}, T_2^{(i)}, W_1^{(i)}, W_2^{(i)}$ such that $A_i$ can be written as $A_i=T_1^{(i)}\bdot T_2^{(i)}\bdot W_1^{(i)}\bdot W_2^{(i)}$, $\sigma(\psi(T_1^{(i)}\bdot \varphi(W_1^{(i)})))=\sigma(\psi(T_2^{(i)}\bdot \varphi(W_2^{(i)})))$,  $T_1^{(i)}\bdot T_2^{(i)}=(A_i)_{G_1}$, and $W_1^{(i)}\bdot W_2^{(i)}=(A_i)_{G_2}$ with $|W_1^{(i)}|=|W_2^{(i)}|$ and $g_1\in \supp(T_1^{(i)})$.
If  $\supp(T_1^{(i)})\cap \supp(T_2^{(i)})\neq \emptyset$, say $h\in\supp(T_1^{(i)})\cap \supp(T_2^{(i)})$, then $h^{[2]}\in \mathcal A(D)$ and $A_i\bdot (h^{[2]})^{[-1]}$ is a product-one sequence by Claim A, a contradiction.
Thus $\supp(T_1^{(i)})\cap \supp(T_2^{(i)})=\emptyset$, whence in combination with $\lim\limits_{i\rightarrow \infty}\mathsf v_{g_1}(A_i)=\infty$ it follows that $\lim\limits_{i\rightarrow\infty} |T_1^{(i)}|=\infty$. Note that $T_1^{(i)}$ is a sequence over a cyclic group $\langle \alpha\rangle$. Then $T_1^{(i)}$ is product-one free and hence $\lim\limits_{i\rightarrow\infty} |\sigma(\psi(T_1^{(i)}))|=\infty$. To see this, assume to the contrary that $S_i=\psi(T_1^{(i)})$ are sequences over a finite subset $\psi(G_1)$ of integers with $|\sigma(S_i)|<N$. Then $\mathsf D(G_1\cup [-N,N])=L<\infty$ by Dickson's Lemma. Thus, if $|S_i|\geq L$, then $S_i\bdot -\sigma(S_i)$ is a product-one sequence of length greater than $L$, implying it factors into two non-trivial product-one sequences, one of which is a subsequence of $S_i$. As a result, once $|S_i|\to \infty$ is large enough, we would be guaranteed of $S_i=\sigma(T_1^{(i)})$ having a non-trivial product-one subsequence, contradicting that $T_1^{(i)}$ is product-one free. This shows $\lim\limits_{i\rightarrow\infty} |\sigma(\psi(T_1^{(i)}))|=\infty$.
In combination with $|W_1^{(i)}|=|W_2^{(i)}|\leq \sum_{g\in G_2}\sup \{\mathsf v_g(A):A\in\mathcal A(D)\}<\infty$ (we already proved that) this gives $\lim\limits_{i\rightarrow\infty} |\sigma(\psi(T_2^{(i)}))|=\infty$ and
$\lim\limits_{i\rightarrow\infty} |T_2^{(i)}|=\infty$, whence there exists $g_2\in G_1$ with $\psi(g_1)\psi(g_2)>0$ (i.e. with the same sign) such that $\lim\limits_{i\rightarrow \infty}\mathsf v_{g_2}(T_2^{(i)})=\infty$. Suppose $g_1=\alpha^r$ and $g_2=\alpha^t$. Then $T=g_1^{[2|t|]}\bdot g_2^{[2|r|]}\in D$ and $A_i\bdot T^{[-1]}$ is a product-one sequence by Claim A, a contradiction, where $i$ is large enough. To sum up, we proved that $N=\sum_{g\in G_0}\sup \{\mathsf v_g(A):A\in\mathcal A(D)\}<\infty$. Since $N$ is the greatest length an atom in $D$ can have, the finiteness of $G_0$ gives that $\mathcal A(D)$ is finite.
 \qedhere[Proof of Claim B]
   \end{proof}

Let $X\subseteq \mathcal B(G_0)$ be an infinite subset. For every $S\in X$, we fix a factorization $z_S\in \mathsf Z(D)$. Since $\mathsf Z(D)$  is a finitely generated free abelian monoid, by Dickson's Lemma (\cite[Theorem 1.5.3]{Ge-HK06a}), we obtain the existence of a finite subset $X'\subseteq X$ such that $\{z_S\colon S\in X'\}$ is the set of all minimal elements of $\{z_S\colon S\in X\}$.
 For every $A\in X'$, if $\{A\bdot A'\in X\setminus X'\colon A'\in D\setminus \mathcal B(G_0)\}\neq \emptyset$, then
 we choose such an $A\bdot A'$ and add it to $X'$. Finally, we get a finite set $X_0$ with $|X_0|\le 2|X'|$.
 We claim $(\mathcal B(G_0)\colon X)=(\mathcal B(G_0)\colon X_0)$, which implies $\mathcal B(G_0)$ is $v$-noetherian by \cite[Proposition 2.1.10]{Ge-HK06a}.

 It suffices to show $(\mathcal B(G_0)\colon X_0)\subseteq (\mathcal B(G_0)\colon X)$.  Let $\frac{S_1}{S_2}\in \mathsf q(\mathcal B(G_0))$ with $S_1,S_2\in \mathcal B(G_0)$ such that $\frac{S_1}{S_2}X_0\subseteq \mathcal B(G_0)$. Let $S\in X$. Then there exists $A\in X'\subseteq X$ such that $\frac{S}{A}\in D$.
 If $\frac{S}{A}\in \mathcal B(G_0)$, then $\frac{S_1}{S_2}S=\frac{S_1}{S_2}A\frac{S}{A}\in \mathcal B(G_0)$. Otherwise $\frac{S}{A}=B\bdot B'$ with $B\in \mathcal B(G_0\cap \langle \alpha\rangle)$ and $B'=[g^{[2]}\colon g\in G_0\cap \langle \alpha\rangle]\setminus \{1\}$. Since $A\in X'$ and $A\bdot B\bdot B'\in X$,  by our construction of $X_0$, there is an $A'\in D\setminus \mathcal B(G_0)$ such that $A\bdot A'\in X_0$. We have that $\frac{S_1}{S_2}A\in \mathcal B(G_0)$  and $\frac{S_1}{S_2}A\bdot A'\in \mathcal B(G_0)$. Assume to the contrary, that $\frac{S_1}{S_2}A\in \mathcal B(G_0\cap \langle \alpha\rangle)$. Then $\frac{S_1}{S_2}A$ is a product-one sequence over the abelian group $\langle \alpha\rangle$ and since $A'\in D\setminus\mathcal B(G_0)$, we have that $\supp(A')\subseteq\langle\alpha\rangle$, and $\sigma(\psi(A'))\neq 0$ as $A'\notin\mathcal B(G_0)$. But now $\frac{S_1}{S_2}A\bdot A'$ is a sequence over the abelian group $\langle \alpha\rangle$ with $\sigma(\psi(\frac{S_1}{S_2}A\bdot A'))=\sigma(psi(\frac{S_1}{S_2}A))+\sigma(\psi(A'))=\sigma(\psi(A'))\neq 0$, contradicting that $\frac{S_1}{S_2}A\bdot A'\in\mathcal B(G_0)$. We obtain  that $\frac{S_1}{S_2}A\notin\mathcal B(G_0\cap\langle\alpha\rangle)$.
 Therefore $\frac{S_1}{S_2}S=\frac{S_1}{S_2}A\bdot\frac{S}{A}\in \mathcal B(G_0)$. It follows that  $\frac{S_1}{S_2}X\subseteq\mathcal B(G_0)$ and the $v$-noetherian property is shown.

 By Theorem \ref{3.11}, $\mathcal B(G_0)^*$ is a finitely generated Krull monoid and $\mathcal B(G_0)$ is a G-monoid. Suppose $g\in G_2$ (then $g^{[2]}\in\mathcal B(G_0)$) and $\mathcal A(\mathcal B(G_0)^*)=\{T_1,\ldots, T_r\}$, where $r\in \N$.
 Using Proposition \ref{3.3}.2, for every $i\in [1,r]$ we fix one $T_i'\in \mathcal B(G_0)$ such that $T_i\bdot T_i'\in \mathcal B(G_0)$. Set $W=\prod_{i\in [1,r]}T_i'$.
 Thus for every $T\in \mathcal B(G_0)^*$, we have that $T=\prod_{i\in [1,r]}T_i^{\ell_i}$, where $\ell_i\in \N$ for every $i\in [1,r]$, whence $$T\bdot W\bdot g^{[2]}=\Big(\prod_{i\in [1,r]}T_i^{[2\lfloor \ell_i/2\rfloor]}\bdot g^{[2]}\Big)\bdot \Big(\prod_{i\in [1,r],\ell_i\text{ is odd}}T_i\bdot T_i'\Big)\bdot \prod_{i\in [1,r],\ell_i\text{ is even}}T_i'\in \mathcal B(G_0)\,.$$
 Therefore $(\mathcal B(G_0)\colon \mathcal B(G_0)^*)\neq \emptyset$ and $\mathcal B(G_0)^*\subseteq \widehat{\mathcal B(G_0)}$. It follows by $\widehat{\mathcal B(G_0)}\subseteq \mathcal B(G_0)^*$ that $\widehat{\mathcal B(G_0)}=\mathcal B(G_0)^{*}$ is a finitely generated Krull monoid and $(\mathcal B(G_0):\widehat{\mathcal B(G_0)})\neq\emptyset$.

 Moreover, suppose that $\langle G_0\rangle \cong G$. From Proposition \ref{3.3}, we know that $\mathcal B(G_0)^{\ast}\subseteq \mathcal F(G_0)$ is a saturated and cofinal submonoid. Furthermore, denoting the given inclusion by $\varphi: B(G_0)^{\ast}\to \mathcal F(G_0)$, we have that $\mathcal C(\varphi)=\mathcal F(G_0)/\mathcal B(G_0)^{\ast}\cong G/G'\cong C_2\times C_2$. Now by  \cite[Theorem 2.4.7.2]{Ge-HK06a}, there are submonoids $F_0\subseteq \mathcal F(G_0)$ and $\mathcal C_0=\{[c]_{\varphi}\colon c\in F_0\}\subseteq \mathcal C(\varphi)$ such that there is  an epimorphism $\varphi^*\colon \mathcal C_0\to \mathcal C(\mathcal B(G_0)^{\ast})$. Since $\mathcal B(G_0)^{\ast}\subseteq F_0$ is still cofinal, it follows that $\mathcal C_0$ is a subgroup of $\mathcal C(\varphi)$, whence $\mathcal C(\mathcal B(G_0)^{\ast})$ is a factor group of  a subgroup of $C(\varphi)\cong C_2 \times C_2$.
\end{proof}

\smallskip

To continue with the proof we need the following results.

\begin{lemma}\label{new1}
	Let $i,j,k$ be  distinct positive integers with $\gcd(i,j,k)=1$.
	Then the following are equivalent.
	\begin{enumerate}
		\item There exist $x,y,z\in \N$ with $\gcd(x,y,z)=1$ such that $ix+jy=kz$ and $ix\not\equiv 0\pmod k$.
		
		\item There exist $x,y,z\in \N$ with $\gcd(x,y,z)=1$ such that $ix+jy=kz$, $ix'\not\equiv 0\pmod k$ for every $x'\in [1,x]$, and $jy'\not\equiv 0\pmod k$ for every $y'\in [1,y]$.
		
		\item $k\neq \gcd(i,k)\gcd(j,k)$.
	\end{enumerate}
\end{lemma}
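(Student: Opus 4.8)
The plan is to reduce all three conditions to statements about the cyclic subgroups of $\Z/k\Z$ generated by $i$ and $j$. Write $d_i=\gcd(i,k)$ and $d_j=\gcd(j,k)$. I would begin by recording two elementary facts. Since any common prime divisor of $d_i$ and $d_j$ would divide each of $i$, $j$, $k$, the hypothesis $\gcd(i,j,k)=1$ forces $\gcd(d_i,d_j)=1$, and hence $d_i d_j\mid k$; writing $k=m\,d_i d_j$ with $m\in\N$, condition~(3) is precisely the assertion $m\ge 2$. Second, for a positive integer $x$ one has $ix\equiv 0\pmod k$ if and only if $(k/d_i)\mid x$, so the least positive solution is $x=k/d_i$; consequently the requirement ``$ix'\not\equiv 0\pmod k$ for every $x'\in[1,x]$'' is equivalent to the single inequality $x<k/d_i$, and symmetrically for $j$ and $y$. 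In this language condition~(2) merely asks for $x,y,z\in\N$ with $\gcd(x,y,z)=1$, $ix+jy=kz$, $x<k/d_i$, and $y<k/d_j$.

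With this reformulation in hand, the implication $(2)\Rightarrow(1)$ is immediate: taking $x'=x$ gives $ix\not\equiv 0\pmod k$. For $(1)\Rightarrow(3)$ I would argue contrapositively. If $m=1$, that is $k=d_i d_j$, then any relation $ix+jy=kz$ reduces modulo $d_j$ (using $d_j\mid j$ and $d_j\mid k$) to $d_j\mid ix$; together with $d_i\mid ix$ and $\gcd(d_i,d_j)=1$ this yields $k=d_i d_j\mid ix$, so $ix\equiv 0\pmod k$ and no solution as in~(1) can exist.

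For the remaining implication $(3)\Rightarrow(2)$, I would assume $m\ge 2$ and construct a solution explicitly. Since $d_i d_j$ is a multiple of $d_i=\gcd(i,k)$, the congruence $ix\equiv d_i d_j\pmod k$ is solvable; let $x_0$ be its least positive solution. Because $d_i d_j=k/m$ is a nonzero residue modulo $k$, we have $ix_0\not\equiv 0$, and as the least positive solution lies in $[1,k/d_i]$ this forces $x_0<k/d_i$. Symmetrically, $jy\equiv -d_i d_j\pmod k$ has a least positive solution $y_0<k/d_j$. Then $ix_0+jy_0\equiv 0\pmod k$ and is positive, so $ix_0+jy_0=kz_0$ for some $z_0\in\N$. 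Dividing by $g=\gcd(x_0,y_0,z_0)$ and setting $(x,y,z)=(x_0/g,y_0/g,z_0/g)$ gives $\gcd(x,y,z)=1$ and $ix+jy=kz$, while $x\le x_0<k/d_i$ and $y\le y_0<k/d_j$ persist; by the reformulation above this is exactly~(2).

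The one point that requires care—and which I regard as the only genuine obstacle—is that passing to $\gcd(x,y,z)=1$ must not destroy the non-vanishing requirements of~(2). This is precisely why I would encode~(2) through the bounds $x<k/d_i$ and $y<k/d_j$ rather than through the exact congruences $ix\equiv d_i d_j$: the bounds are preserved when $x$ and $y$ shrink, whereas the exact residues are not. Everything else is routine congruence bookkeeping, once the coprimality $\gcd(d_i,d_j)=1$ and the factorization $k=m\,d_i d_j$ are established.
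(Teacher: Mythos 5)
Your proof is correct, and it takes a genuinely different route from the paper's. The paper proves the cycle $(3)\Rightarrow(1)\Rightarrow(2)\Rightarrow(3)$: for $(3)\Rightarrow(1)$ it writes $\gcd(j,k)=kz'-jy'$ and multiplies through by $i$, and for the harder step $(1)\Rightarrow(2)$ it runs a descent argument, choosing among all witnesses of (1) one minimizing $x_0+y_0$ and showing that any premature vanishing $ix'\equiv 0$ or $jy'\equiv 0$ would allow a strictly smaller witness. You instead prove $(2)\Rightarrow(1)\Rightarrow(3)\Rightarrow(2)$, and the key structural difference is your reformulation of condition (2): since the least positive $x'$ with $ix'\equiv 0\pmod k$ is exactly $k/\gcd(i,k)$, the quantified non-vanishing conditions collapse to the single inequalities $x<k/\gcd(i,k)$ and $y<k/\gcd(j,k)$. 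This lets you hit the strongest condition (2) directly from (3) by taking least positive solutions of $ix\equiv d_id_j$ and $jy\equiv -d_id_j\pmod k$, and it makes the final normalization $\gcd(x,y,z)=1$ harmless because the bounds are monotone under shrinking $x$ and $y$ — precisely the point where the paper needs its minimality argument. What your approach buys is the elimination of the extremal argument in exchange for one observation about the structure of the vanishing set of $x\mapsto ix\bmod k$; what the paper's buys is that (1) comes essentially for free from Bezout. The remaining implications ($(2)\Rightarrow(1)$ trivially, and the divisibility argument $d_i\mid ix$, $d_j\mid ix$, coprimality, hence $k\mid ix$ for the implication into (3)) coincide in substance in both treatments.
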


\begin{proof}
	Since $\gcd(i,j,k)=1$, we have that $\gcd(\gcd(i,k),\gcd(j,k))=1$ and hence $\gcd(i,k)\gcd(j,k)$ divides $k$. 
	
	$(3)\Rightarrow (1)$ Suppose $k\neq \gcd(i,k)\gcd(j,k)$. There exist $y',z'\in \N$ such that $\gcd(j,k)=kz'-jy'$ and hence $i\gcd(j,k)+j(y'i)=k(z'i)$. Assume to the contrary that $i\gcd(j,k)\equiv 0\pmod k$. Then ${k/(\gcd(i,k)\gcd(j,k))}$ divides $i/\gcd(i,k) $, a contradiction to the fact that  $$\gcd(i/\gcd(i,k), k/\gcd(i,k))=1\,.$$
	Let $d=\gcd(\gcd(j,k), y'i, z'i)$. Then the assertion follows by choosing $$(x,y,z)=(\gcd(j,k)/d, y'i/d, z'i/d)\,.$$

	$(1)\Rightarrow (2)$. Among all the choices of $(x,y,z)\in \N^3$ with $\gcd(x,y,z)=1$ such that $ix+jy=kz$ and $ix\not\equiv 0\pmod k$, we let $(x_0,y_0,z_0)$ be the choice such that $x_0+y_0$ is minimal. If there exists $x'\in [1,x_0-1]$ such that $ix'\equiv 0\pmod k$, then there exists $z'\in [1,z_0-1]$ such that $ix'=kz'$ and hence $i(x_0-x')+jy_0=k(z_0-z')$. Thus dividing $x_0-x', y_0$ and $z_0-z'$ by their gcd leads to a contradiction to the minimality of $x_0+y_0$. Suppose there exists $y'\in [1,y_0]$ such that $jy'\equiv 0\pmod k$. If $y'=y_0$, then $k$ divides $ix_0$, a contradiction. If $y'<y_0$, then there exists $z'\in [1,z_0-1]$ such that $jy'=kz'$ and hence $ix_0+j(y_0-y')=k(z_0-z')$, as before a contradiction to the minimality of $x_0+y_0$.

	$(2)\Rightarrow(3)$ Suppose there exist $x,y,z\in \N$ such that $ix+jy=kz$ and $ix\not\equiv 0\pmod k$. Assume to the contrary that $k=\gcd(i,k)\gcd(j,k)$. Since $\gcd(i,k)$ divides $ix$ and $\gcd(j,k)$ divides $ix$, we obtain that $k=\gcd(i,k)\gcd(j,k)$ divides $ix$, a contradiction.
	\end{proof}

\begin{lemma}
	\label{new2}
	Let $I\subseteq \N$ be a subset with $|I|\ge 3$. Suppose that for any three elements $i,j,k$ of $I$, we have that $i=\frac{\gcd(i,j,k)jk}{\gcd(j,k)^2}$, $j=\frac{\gcd(i,j,k)ik}{\gcd(i,k)^2}$, and $k=\frac{\gcd(i,j,k)ij}{\gcd(i,j)^2}$. Then there exist pairwise co-prime positive integers $b_i$, for $ i\in I$, such that
	\[
	kb_k=\gcd(I)\prod_{i\in I}b_i
	\]
for every $k\in I$.
\end{lemma}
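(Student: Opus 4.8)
The plan is to reduce everything to a statement about $p$-adic valuations, prime by prime. Write $n = |I|$, put $L = \lcm(I)$ and $g_0 = \gcd(I)$, and define $b_i = L/i$ for each $i \in I$; these are positive integers since $i \mid L$. I claim these are the desired numbers. Since both pairwise coprimality and the identity $k b_k = g_0 \prod_{i \in I} b_i$ are multiplicative conditions, it suffices to fix a prime $p$ and compare the $p$-adic valuation $v_p$ on both sides. Writing $a_i = v_p(i)$, $M = \max_{i \in I} a_i$ and $m = \min_{i \in I} a_i$, one has $v_p(L) = M$, $v_p(g_0) = m$, and $v_p(b_i) = M - a_i$.

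The heart of the argument is the following structural claim: for every prime $p$, at most one $i \in I$ satisfies $a_i < M$. To prove it, I first rewrite the hypothesis in valuations, so that the relation $i\gcd(j,k)^2 = \gcd(i,j,k)\,jk$ becomes $a_i + 2\min(a_j,a_k) = \min(a_i,a_j,a_k) + a_j + a_k$. Now suppose, for contradiction, that two distinct elements $u, v$ have $a_u, a_v < M$, and pick $w \in I$ with $a_w = M$; then $w \notin \{u,v\}$, so $\{u,v,w\}$ is a genuine triple. Applying the valuation form of the relation that isolates $w$, namely $a_w + 2\min(a_u,a_v) = \min(a_u,a_v,a_w) + a_u + a_v$, and using $\min(a_u,a_v,a_w) = \min(a_u,a_v)$ together with $a_u + a_v = \min(a_u,a_v) + \max(a_u,a_v)$, the equation collapses to $M = \max(a_u,a_v)$, contradicting $a_u, a_v < M$. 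This establishes the claim. I expect this extraction of the ``at most one element of below-maximal valuation'' structure from the three gcd-identities to be the main obstacle; once it is found, the definition $b_i = L/i$ (equivalently, the observation that the hypothesis forces all pairwise least common multiples to coincide) is essentially dictated.

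With the claim in hand, the two required properties drop out by bookkeeping of valuations. For coprimality, $v_p(b_i) = M - a_i > 0$ forces $a_i < M$, which by the claim holds for at most one index $i$; hence no prime divides two distinct $b_i$, so the $b_i$ are pairwise coprime. For the identity, I compute $v_p(k b_k) = a_k + (M - a_k) = M$, while $v_p\!\left(g_0 \prod_{i \in I} b_i\right) = m + \sum_{i \in I}(M - a_i) = m + nM - \sum_{i\in I} a_i$. The claim yields $\sum_{i\in I} a_i = m + (n-1)M$ (either all $a_i$ equal $M$, in which case $m = M$, or a single exceptional index contributes $m$ while the remaining $n-1$ indices contribute $M$), so this second valuation also equals $M$. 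Thus both sides have equal $p$-adic valuation for every prime $p$, whence $k b_k = g_0 \prod_{i\in I} b_i$, completing the proof.
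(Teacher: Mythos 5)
Your proof is correct, and it takes a genuinely different route from the paper's. The paper argues by induction on $|I|$: for the base case $|I|=3$ it sets $b_i=\gcd(j,k)/\gcd(i,j,k)$ and verifies the identity directly from the three hypotheses, and for $|I|=t\ge 4$ it constructs $b_t=\gcd(a_1,\dots,a_{t-1})/\gcd(a_1,\dots,a_t)$ and checks compatibility with the inductively obtained $b_1,\dots,b_{t-1}$ through a sequence of gcd manipulations. You instead give a closed-form, induction-free construction $b_i=\lcm(I)/i$ and verify everything one prime at a time; the decisive step is your structural claim that for each prime $p$ at most one element of $I$ has below-maximal $p$-adic valuation, which you extract cleanly from the valuation form $a_w+2\min(a_u,a_v)=\min(a_u,a_v,a_w)+a_u+a_v$ of the triple identity (I checked: choosing $a_w=M$ maximal collapses this to $M=\max(a_u,a_v)$, exactly as you say, and the three elements $u,v,w$ are genuinely distinct so the hypothesis applies). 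Once that claim is in hand, both pairwise coprimality and the valuation computation $m+\sum_{i\in I}(M-a_i)=M$ are immediate. Your argument is shorter and more transparent about \emph{why} the hypothesis forces the conclusion --- it shows the three gcd identities are equivalent to this one-exceptional-valuation condition, and in particular that all pairwise lcm's in $I$ coincide --- whereas the paper's inductive construction produces the same $b_i$ (one can check the two formulas agree under the hypothesis) but keeps the mechanism hidden inside gcd bookkeeping. The only implicit assumption in both treatments is that $I$ is finite, which is forced anyway by the product $\prod_{i\in I}b_i$ in the conclusion and by the intended application.
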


\begin{proof}
Suppose $|I|=3$, say $I=\{i,j,k\}$. Let $b_i=\gcd(j,k)/\gcd(i,j,k)$, $b_j=\gcd(i,k)/\gcd(i,j,k)$, and $b_k=\gcd(i,j)/\gcd(i,j,k)$.
By symmetry, it suffices to show that $k=\gcd(i,j,k)b_ib_j$. 
Since $i\gcd(i,j,k)=jk/b_i^2$ and $j\gcd(i,j,k)=ik/b_j^2$, we obtain that
$ij\gcd(i,j,k)^2=\frac{ijk^2}{b_i^2b_j^2}$, whence $k=\gcd(i,j,k)b_ib_j$.

Suppose $t=|I|\ge 4$. We proceed by induction on $t$. Suppose the assertion holds for every subset $J\subseteq I$ with $|J|=t-1$. 
Let $I=\{a_1,\ldots,a_t\}$ and $b_t=\gcd(a_1,\ldots,a_{t-1})/\gcd(a_1,\ldots,a_t)$. By induction hypothesis, there exist pairwise co-prime positive integers $b_1,\ldots,b_{t-1}$ such that $a_ib_i=\gcd(a_1,\ldots,a_t)b_1\ldots b_{t}$ for every $i\in [1,t-1]$.
Let $j\in [2,t-1]$. Consider the subset $\{a_1,a_j,a_t\}$ of $I$. In view of $\gcd(a_1, a_j)=\gcd(a_1,\ldots,a_t)\gcd(\frac{b_1\ldots b_t}{b_1}, \frac{b_1\ldots b_t}{b_j})=\gcd(a_1,\ldots,a_t)\frac{b_1\ldots b_t}{b_1b_j}$, we have that
\begin{equation}\label{equation:a_t}
a_t=\frac{\gcd(a_1,a_j,a_t)a_1a_j}{\gcd(a_1,a_j)^2}=\gcd(a_1,a_j,a_t)b_1b_j,
\end{equation}
 whence $b_j$ divides $a_t/\gcd(a_1,\ldots,a_t)$ for every $j\in [1,t-1]$.
 It follows that $b_1\ldots b_{t-1}$ divides $a_t/\gcd(a_1,\ldots,a_t)$. Using the equation $\gcd(a_1, a_j)=\gcd(a_1,\ldots,a_t)\frac{b_1\ldots b_t}{b_1b_j}$ again, it follows by $\gcd(b_t, \frac{a_t}{\gcd(a_1,\ldots ,a_t)})=1$, that  $$\gcd(a_1,a_j,a_t)=\gcd(a_1,\ldots,a_t)\gcd\left(\frac{b_1\ldots b_{t}}{b_1b_j}, \frac{a_t}{\gcd(a_1,\ldots, a_t)}\right)=\gcd(a_1,\ldots,a_t)\frac{b_1\ldots b_{t-1}}{b_1b_j}\,.$$
 Therefore $a_tb_t=\gcd(a_1,a_j,a_t)b_1b_jb_t=\gcd(a_1,\ldots,a_t)b_1\ldots b_t$ by (\ref{equation:a_t}).
 
 Since $1=\gcd(b_t, \frac{a_t}{\gcd(a_1,\ldots ,a_t)})=\gcd(b_t, b_1\ldots b_{t-1})$, we know $b_1,\ldots,b_t$ are pairwise co-prime positive integers.
\end{proof}

\begin{lemma}
	\label{new3}
	Let $G$ be an infinite dihedral group, say $G = \langle \alpha, \tau \colon \tau^2=1, \alpha \tau = \tau \alpha^{-1} \rangle$, and let $G_0=\{\tau\}\cup\{\alpha^i\colon i\in  I\}\cup \{\alpha^{-j}\colon j\in J\}$, where 
	$I,J$ are  nonempty sets of positive integers.
	\begin{enumerate}
		\item $\mathfrak X(\mathcal B(G_0))= \{\mathfrak p_a\colon a\in G_0\}$ and $\bigcap_{\mathfrak p\in \mathfrak X(\mathcal B(G_0))}\mathcal B(G_0)_{\mathfrak p}\subseteq \mathcal F(G_0)$, where $\mathfrak p_{a}=\{S\in\mathcal B(G_0)\colon a\in \supp(S)\}
		$.
		
		\item If $\mathcal B(G_0)$ is weakly Krull, then for any disjoint subsets $K_1,K_2\subseteq I\cup J$ with $1\le |K_1|+|K_2|$ and $|K_1|<|I\cup J|$, we have that $\mathcal B\big(\{\tau\}\cup\{\alpha^i\colon i\in (I\cup J)\setminus K_1\}\cup \{\alpha^{-k}\colon k\in K_1\cup K_2\}\big)$ is weakly Krull.
	\end{enumerate}
	
\end{lemma}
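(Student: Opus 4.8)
The statement has two parts, and Part 1 is the engine that drives Part 2, so I would prove it first.

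\medskip\noindent\textbf{Part 1.} By Proposition \ref{3.8} we already have $s$-$\spec(\mathcal B(G_0))=\{\mathfrak p_X\colon X\subseteq G_0\}$ together with the inclusion $\mathfrak X(\mathcal B(G_0))\subseteq\{\mathfrak p_a\colon a\in G_0\}$, so it remains to check that each $\mathfrak p_a$ is minimal. Exactly as in the proof of Proposition \ref{3.8}, for this it suffices to produce, for each pair of distinct $a,b\in G_0$, a product-one sequence in $\mathfrak p_a\setminus\mathfrak p_b$, since then $\mathfrak p_a\not\subseteq\mathfrak p_b$ for every $b\neq a$ forces minimality. I would write these down explicitly: $\tau^{[2]}$ has support $\{\tau\}$ and settles $a=\tau$ against all $b$ at once; for $a=\alpha^i$ with $i\in I$, the sequence $\tau^{[2]}\bdot(\alpha^i)^{[2]}$ (product-one because $\tau\alpha^i\tau\alpha^i=1$) has support $\{\tau,\alpha^i\}$ and excludes every cyclic $b\neq\alpha^i$, while $(\alpha^i)^{[j]}\bdot(\alpha^{-j})^{[i]}$ for any $j\in J$ (using $J\neq\emptyset$) excludes $b=\tau$; the case $a=\alpha^{-j}$ is symmetric, using $I\neq\emptyset$. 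This yields $\mathfrak X(\mathcal B(G_0))=\{\mathfrak p_a\colon a\in G_0\}$, and the inclusion $\bigcap_{\mathfrak p\in\mathfrak X(\mathcal B(G_0))}\mathcal B(G_0)_{\mathfrak p}\subseteq\mathcal F(G_0)$ is then immediate from Proposition \ref{3.8}.2.

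\medskip\noindent\textbf{Part 2, strategy.} Set $N=I\cup J$ and let $H=\{\tau\}\cup\{\alpha^n,\alpha^{-n}\colon n\in N\}$ be the symmetric closure, which contains both $G_0$ and $G_0'$ as subsets. The plan is to factor the passage $G_0\rightsquigarrow G_0'$ through $H$: first ascend from $\mathcal B(G_0)$ to $\mathcal B(H)$ by adjoining the missing opposite signs, then descend from $\mathcal B(H)$ to $\mathcal B(G_0')$ by deleting elements. Every monoid occurring contains $\tau$ and has both signs present (the conditions $1\le|K_1|+|K_2|$ and $|K_1|<|I\cup J|$ guarantee this for $G_0'$), so Part 1 applies throughout and weak Krullness is equivalent to $\mathcal B(\bdot)=\bigcap_{a}\mathcal B(\bdot)_{\mathfrak p_a}$. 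The descent is the easy half: since $G_0'\subseteq H$, the identity $\mathcal B(G_0')=\mathcal B(H)\cap\mathcal F(G_0')$ exhibits $\mathcal B(G_0')$ as a divisor-closed submonoid of $\mathcal B(H)$ by Lemma \ref{3.2}, and I would argue directly from Part 1 that weak Krullness descends: given $S\in\bigcap_{b\in G_0'}\mathcal B(G_0')_{\mathfrak p_b}\subseteq\mathcal F(G_0')$, any localizing witness $T_b\in\mathcal B(G_0')$ also witnesses $S\in\mathcal B(H)_{\mathfrak p_a}$ for every $a\in H$ (when $a\notin G_0'$ one has $a\notin\supp(T_b)\subseteq G_0'$ automatically), whence $S\in\mathcal B(H)\cap\mathcal F(G_0')=\mathcal B(G_0')$.

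\medskip\noindent\textbf{Part 2, the core.} The real content is the ascent, which I would reduce to a single elementary step to be iterated: if $\mathcal B(H_0)$ is weakly Krull and $\alpha^m\in H_0$, then $\mathcal B(H_0\cup\{\alpha^{-m}\})$ is weakly Krull; applying this repeatedly takes $\mathcal B(G_0)$ to $\mathcal B(H)$. For this step I would use the membership description from Claim A in the proof of Theorem \ref{5.1}.1, specialized to $G_0\cap\langle\alpha\rangle\tau=\{\tau\}$: a sequence with an even number of $\tau$'s is product-one precisely when its multiset of cyclic exponents splits into two parts of equal sum, and the localization conditions translate into being able to achieve such a balance after appending an auxiliary product-one sequence avoiding a prescribed generator. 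The decisive device is the reflection identity $\tau\alpha^m\tau=\alpha^{-m}$, which lets auxiliary sequences assembled from $\alpha^m$ and $\tau$ (both available in $H_0$) simulate the newly adjoined $\alpha^{-m}$; this is what converts a ``balanced in all localizations'' certificate for $H_0\cup\{\alpha^{-m}\}$ into one over $H_0$, where the assumed weak Krullness of $\mathcal B(H_0)$ closes the argument.

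\medskip The main obstacle is exactly this elementary ascent step, and the difficulty lies in the support bookkeeping built into the localizations $\mathcal B(\bdot)_{\mathfrak p_b}$. Replacing an $\alpha^{-m}$ by a $\tau$-conjugate of $\alpha^m$ changes both the $\tau$-count and the cyclic sum, so the substitution must be organized so that the auxiliary sequences still avoid the relevant prime $\mathfrak p_b$ and the equal-sum splitting from Claim A is preserved rather than destroyed. I expect this to require a case analysis according to whether the localizing prime is $\mathfrak p_\tau$, one of $\mathfrak p_{\alpha^{\pm m}}$, or some other $\mathfrak p_{\alpha^{\pm n}}$, combined with the parity constraints that Claim A imposes on the number of reflections.
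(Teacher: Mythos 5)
Your Part 1 is essentially the paper's argument (Proposition \ref{3.8} plus explicit sequences witnessing pairwise non-containment of the $\mathfrak p_a$), and your descent step --- restricting from the symmetric closure $H$ to $G_0'$ using Part 1 and the fact that witnesses with support in $G_0'$ automatically avoid every $a\in H\setminus G_0'$ --- is correct; it is even a somewhat cleaner way to organize the reduction than the paper's four-case Claim, which handles sign flips and sign adjoins as separate elementary moves. The problem is the ascent step. You correctly identify it as the core of the lemma, reduce to ``if $\mathcal B(H_0)$ is weakly Krull and $\alpha^m\in H_0$, then $\mathcal B(H_0\cup\{\alpha^{-m}\})$ is weakly Krull,'' propose the reflection identity $\tau\alpha^m\tau=\alpha^{-m}$ as the decisive device, and then explicitly defer the case analysis. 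That is where all the difficulty sits, and the proposed device cannot close the one case you yourself flag as problematic: the localizing witness for the prime $\mathfrak p_\tau$ must avoid $\tau$ entirely, so occurrences of $\alpha^{-m}$ in the given witness $S_\tau$ over $H_0\cup\{\alpha^{-m}\}$ cannot be simulated by $\tau$-conjugation at all.

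The paper resolves exactly this point with a different idea. It uses the sign-forgetting map $\phi$ with $\phi(\alpha^{-i})=\alpha^{i}$, which preserves membership in $\mathcal B$ in both directions only for sequences containing $\tau$; for the $\mathfrak p_\tau$-witness, which contains no $\tau$, it instead exploits the presence of an element of the opposite sign in the base set. Concretely, if $t=\mathsf v_{\alpha^{-i}}(S_\tau)$, then $\phi(S_\tau)$ has exponent sum $2it$ rather than $0$, and the paper repairs this by passing to $\phi(S_\tau^{[j]})\bdot(\alpha^{-j})^{[2it]}$ for some $j\in J$, which is again a product-one sequence avoiding $\tau$ and still combines correctly with $\phi(T)$. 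This use of $j\in J$ --- that is, of the standing hypothesis that both signs are present --- is the key ingredient your outline is missing; without it or some substitute, the ascent step does not go through, so the proposal has a genuine gap at its central point.
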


\begin{proof}
	1. Let $\mathfrak p_{a}=\{S\in\mathcal B(G_0)\colon a\in \supp(S)\}
	$, where $a\in G_0$. Then Proposition \ref{3.8} implies that $\mathfrak X(\mathcal B(G_0))\subseteq \{\mathfrak p_a\colon a\in G_0\}$.
	Assume to the contrary that there exist distinct $a,b\in G_0$ such that $\mathfrak p_a\subseteq \mathfrak p_b$. Since $\tau^{[2]}$ is an atom, we obtain that $a\neq \tau$ and hence $a\in G_0\setminus\{\tau\}$. Since $a^{[2]}\bdot \tau^{[2]}$ is an atom, we have that $b$ can only be $\tau$. Note that $I$ and $J$ are both nonempty sets. There exists a product-one sequence $B$ with $a\in \supp(B)\subseteq G_0\setminus \{\tau\}$, whence $B\in \mathfrak p_a\setminus \mathfrak p_b$, a contradiction. Therefore $\mathfrak X(\mathcal B(G_0))= \{\mathfrak p_a\colon a\in G_0\}$ and it follows by Proposition \ref{3.8}.2 that $\bigcap_{\mathfrak p\in \mathfrak X(\mathcal B(G_0))}\mathcal B(G_0)_{\mathfrak p}\subseteq \mathcal F(G_0)$.

	\medskip
	2.	Let $i\in I$ and let $j\in J$. It suffices to show the following Claim.
	
	\smallskip	
	\noindent
	{\bf Claim:} \begin{enumerate}
		\item[(a)] $\mathcal B(G_0\cup\{\alpha^{-i}\}\setminus \{\alpha^i\})$ is weakly Krull, provided that $|I|\ge 2$.
		
		\item[(b)]  $\mathcal B(G_0\cup\{\alpha^{j}\}\setminus \{\alpha^{-j}\})$ is weakly Krull, provided that $|J|\ge 2$.
		
		\item[(c)] $\mathcal B(G_0\cup\{\alpha^{-i}\})$   is weakly Krull.
		
			\item[(d)]  $\mathcal B(G_0\cup\{\alpha^{j}\})$ is weakly Krull.
	\end{enumerate} 
	\smallskip
	
	By symmetry, we only prove items (a) and (c). Let $G_1=G_0\cup\{\alpha^{-i}\}$ and let $G_2=G_1\setminus \{\alpha^i\}$.
	We define a homomorphism
	 $\phi\colon \mathcal F(G_1)\rightarrow \mathcal F(G_0)$  by $\phi(\alpha^{-i})=\alpha^{i}$ and $\phi(x)=x$ for all $x\in G_0\setminus\{\alpha^{-i}\}$. It easy to see that, for a sequence $S\in \mathcal F(G_1)$ with $\mathsf v_{\tau}(S)>0$, we have that $S\in \mathcal B(G_1)$ if and only if $\phi(S)\in \mathcal B(G_0)$.	
It follows by 1. that $\bigcap_{\mathfrak p\in \mathfrak X(\mathcal B(G_1))}\mathcal B(G_1)_{\mathfrak p}\subseteq \mathcal F(G_1)$ and $\bigcap_{\mathfrak p\in \mathfrak X(\mathcal B(G_2))}\mathcal B(G_2)_{\mathfrak p}\subseteq \mathcal F(G_2)$.

Assume to the contrary that $\mathcal B(G_1)$ is not weakly Krull.
	Then there exist $T\in \mathcal F(G_1)\setminus\mathcal B(G_1)$ and $S_a\in \mathcal B(G_1)$, for every $a\in G_1$, such that $a\not\in \supp(S_a)$ and $T\bdot S_a\in \mathcal B(G_1)$. If $\tau\not\in \supp(T)$, then $T\bdot S_{\tau}$ and $S_{\tau}$ are both product-one sequences over $\langle \alpha \rangle$, whence $T$ is a product-one sequence, a contradiction. Thus $\tau\in \supp(T)$ and hence $\phi(T)\in \mathcal F(G_0)\setminus\mathcal B(G_0)$,  $\phi(S_a\bdot \tau^{[2]})\in \mathcal B(G_0)$, and $\phi(T\bdot S_a\bdot\tau^{[2]})\in \mathcal B(G_0)$  for every $a\in G_1\setminus \{\tau\}$. Suppose $t=\mathsf v_{\alpha^{-i}}(S_{\tau})$. Then $\alpha^{2it}\in \pi(\phi(S_{\tau}))$ and $T_{\tau}:=\phi(S_{\tau}^{[j]})\bdot (\alpha^{-j})^{[2it]}$ is a product-one sequence with $\tau\not\in \supp(T_{\tau})$.
	Since $T\bdot S_{\tau}^{[j]}=T\bdot S_{\tau}\bdot S_{\tau}^{[j-1]}$ is a product-one sequence over $G_1$, we obtain that $\phi(T\bdot S_{\tau}^{[j]})$ is a product-one sequence over $G_0$ and hence $\phi(T)\bdot T_{\tau}$ is a product-one sequence. 
	Putting this all together, we obtain that $\phi(T)\in (\bigcap_{\mathfrak p\in \mathfrak X(\mathcal B(G_0))}\mathcal B(G_0)_{\mathfrak p})\setminus \mathcal B(G_0)$, a contradiction.

	Assume to the contrary that $\mathcal B(G_2)$ is not weakly Krull.
	Then there exist $T\in \mathcal F(G_2)\setminus\mathcal B(G_2)$ and $S_a\in \mathcal B(G_2)$, for every $a\in G_2$, such that $a\not\in \supp(S_a)$ and $T\bdot S_a\in \mathcal B(G_2)$. If $\tau\not\in \supp(T)$, then $T\bdot S_{\tau}$ and $S_{\tau}$ are both product-one sequences over $\langle \alpha \rangle$, whence $T$ is a product-one sequence, a contradiction. Thus $\tau\in \supp(T)$ and hence $\phi(T)\in \mathcal F(G_0)\setminus\mathcal B(G_0)$,  $\phi(S_a\bdot \tau^{[2]})\in \mathcal B(G_0)$, and $\phi(T\bdot S_a\bdot\tau^{[2]})\in \mathcal B(G_0)$  for every $a\in G_2\setminus \{\tau\}$. Suppose $t=\mathsf v_{\alpha^{-i}}(S_{\tau})$. Then $\alpha^{2it}\in \pi(\phi(S_{\tau}))$ and $T_{\tau}:=\phi(S_{\tau}^{[j]})\bdot (\alpha^{-j})^{[2it]}$ is a product-one sequence with $\tau\not\in \supp(T_{\tau})$.
	Since $T\bdot S_{\tau}^{[j]}=T\bdot S_{\tau}\bdot S_{\tau}^{[j-1]}$ is a product-one sequence over $G_2$, we obtain that $\phi(T\bdot S_{\tau}^{[j]})$ is a product-one sequence over $G_0$ and hence $\phi(T)\bdot T_{\tau}$ is a product-one sequence. 
	Putting this all together, we obtain that $\phi(T)\in (\bigcap_{\mathfrak p\in \mathfrak X(\mathcal B(G_0))}\mathcal B(G_0)_{\mathfrak p})\setminus \mathcal B(G_0)$, a contradiction. 
\end{proof}

\begin{lemma}\label{new4}
	Let $G$ be an infinite dihedral group, say $G = \langle \alpha, \tau \colon \tau^2=1, \alpha \tau = \tau \alpha^{-1} \rangle$.
	Let  $G_0=\{\alpha^i, \alpha^j, \alpha^{-k}, \tau\}$ be a subset, where $i,j,k$ are distinct positive integers with $\gcd(i,j,k)=1$. If $\mathcal B(G_0)$ is weakly Krull, then there exist pairwise co-prime  integers $r,s,t\in \N$  such that $i=st$, $j=rt$, and $k=rs$.
\end{lemma}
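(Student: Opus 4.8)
The plan is to prove the contrapositive, and to reduce the target conclusion to a single numerical identity. Since $\gcd(i,j,k)=1$, the existence of pairwise coprime $r,s,t\in\N$ with $i=st$, $j=rt$, $k=rs$ is equivalent to the identity $k=\gcd(i,k)\gcd(j,k)$: one checks directly that this identity (with $\gcd(i,j,k)=1$) forces $\gcd(i,k),\gcd(j,k),\gcd(i,j)$ to be pairwise coprime and to give the factorization, which is exactly the three-element instance of the hypothesis of Lemma \ref{new2}, whose conclusion then produces $r,s,t$. So it suffices to show that if $k\neq\gcd(i,k)\gcd(j,k)$, then $\mathcal B(G_0)$ is not weakly Krull. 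By Lemma \ref{new1}, this inequality is equivalent to the existence of $x,y,z\in\N$ with $\gcd(x,y,z)=1$, $ix+jy=kz$, $ix'\not\equiv 0\pmod k$ for all $x'\in[1,x]$, and $jy'\not\equiv 0\pmod k$ for all $y'\in[1,y]$; I would use this witness to break weak Krullness.

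By Lemma \ref{new3}.1 we have $\mathfrak X(\mathcal B(G_0))=\{\mathfrak p_a\colon a\in G_0\}$ and $\bigcap_{\mathfrak p\in\mathfrak X(\mathcal B(G_0))}\mathcal B(G_0)_{\mathfrak p}\subseteq\mathcal F(G_0)$. Hence, to prove that $\mathcal B(G_0)$ is not weakly Krull, it is enough to produce a sequence $T\in\mathcal F(G_0)\setminus\mathcal B(G_0)$ lying in $\mathcal B(G_0)_{\mathfrak p_a}$ for every $a\in G_0$; concretely, for each $a$ a product-one sequence $S_a\in\mathcal B(G_0)$ with $a\notin\supp(S_a)$ and $T\bdot S_a\in\mathcal B(G_0)$. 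Throughout I would use the criterion from Claim A in the proof of Theorem \ref{5.1}.1: a sequence with an even number $\ge 2$ of factors $\tau$ is product-one precisely when the multiset of exponents of its $\langle\alpha\rangle$-part admits a subset summing to half of the total $\sigma(\psi(\cdot))$, while a sequence with no factor $\tau$ is product-one precisely when that total is $0$. In particular $T$ must have an even, nonzero number of factors $\tau$ (otherwise membership in $\mathcal B(G_0)_{\mathfrak p_\tau}$ already forces $T\in\mathcal B(G_0)$), so I would take $\mathsf v_\tau(T)=2$.

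The candidate witness is $T=(\alpha^i)^{[x]}\bdot(\alpha^j)^{[y]}\bdot\tau^{[2]}$, modified by a few factors $\alpha^{-k}$ when the exponent total must be made even. Writing $\Sigma=ix+jy=kz$ for the $\langle\alpha\rangle$-total, the construction does two things at once. First, $T\notin\mathcal B(G_0)$, because no subset of $\{i^{(x)},j^{(y)}\}$ sums to $\Sigma/2$; here the restrictions $ix'\not\equiv 0$ and $jy'\not\equiv 0\pmod k$, together with $\gcd(x,y,z)=1$, are exactly what bound the available multiplicities and control the residues mod $k$ so as to forbid such a subset. Second, $T$ lies in every localization: for $a=\tau$ one only needs $\Sigma$ even, since adjoining a large balanced block over $\{\alpha^i,\alpha^j,\alpha^{-k}\}$ of total $0$ makes $\Sigma/2$ a subset sum as $\gcd(i,j,k)=1$; for $a\in\{\alpha^i,\alpha^j,\alpha^{-k}\}$ one adjoins a product-one sequence $S_a$ assembled from doubled balanced blocks such as $(\alpha^j)^{[2n]}\bdot(\alpha^{-k})^{[2m]}\bdot\tau^{[2]}$ (product-one by splitting each block evenly), none of them involving $a$, and chosen so that the enlarged $\langle\alpha\rangle$-part does admit the required equal splitting.

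The main obstacle is this simultaneous balancing: the multiplicities $(x,y)$ and the possibly added factors $\alpha^{-k}$ must be arranged so that $\Sigma$ is even while $\Sigma/2$ remains off the subset-sum set of $T$ itself, yet becomes reachable after each of the four admissible enlargements. The delicate points are the parity of $\Sigma=kz$ — when $kz$ is odd one cannot simply rescale the relation, since symmetric doubling reinstates $\Sigma/2$ as a subset sum (via $(a,b)=(x,y)$), so instead one perturbs by an odd number of factors $\alpha^{-k}$, keeping $\Sigma/2$ unreachable through the same congruence conditions — and the localization at $\alpha^{-k}$, where only the few factors $\alpha^{-k}$ actually present in $T$ are available, so that the residues mod $k$ supplied by Lemma \ref{new1} are precisely what guarantee a valid completion. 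Establishing these modular subset-sum statements is the technical heart of the argument.
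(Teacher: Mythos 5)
There is a genuine gap, and it occurs at the very first step. You claim that, given $\gcd(i,j,k)=1$, the existence of pairwise coprime $r,s,t\in\N$ with $i=st$, $j=rt$, $k=rs$ is \emph{equivalent} to the single identity $k=\gcd(i,k)\gcd(j,k)$, and you then reduce the whole lemma to ruling out $k\neq\gcd(i,k)\gcd(j,k)$ via Lemma \ref{new1}. This equivalence is false. Take $(i,j,k)=(4,9,6)$: these are distinct positive integers with $\gcd(i,j,k)=1$ and $\gcd(i,k)\gcd(j,k)=2\cdot 3=6=k$, yet no factorization $i=st$, $j=rt$, $k=rs$ can exist, since it would force $(rst)^2=ijk=216$, which is not a perfect square. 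The identity $k=\gcd(i,k)\gcd(j,k)$ is only one of \emph{three} conditions needed; one must also establish $i=\gcd(i,k)\gcd(i,j)$ and $j=\gcd(j,k)\gcd(i,j)$ before Lemma \ref{new2} (or a direct computation) yields the conclusion. The paper obtains the other two identities by exploiting the sign-flip symmetry of Lemma \ref{new3}.2 — weak Krullness of $\mathcal B(\{\alpha^i,\alpha^j,\alpha^{-k},\tau\})$ transfers to $\mathcal B(\{\alpha^i,\alpha^{-j},\alpha^k,\tau\})$ and $\mathcal B(\{\alpha^{-i},\alpha^j,\alpha^k,\tau\})$ — and then runs the same Lemma \ref{new1} argument with the roles of $i,j,k$ permuted. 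Your proposal never produces these two additional constraints, so even if every construction you sketch were carried out, the conclusion would not follow.

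Two further points. First, the paper must separately dispose of the parity-obstructed configurations (its Cases 1 and 2, where $2$-adic valuations of $i,j,k$ already preclude the factorization regardless of gcd identities); your sketch does not engage with these, and they are not captured by the single identity you work with. Second, the entire construction of the witness $T$ and the four localizing sequences $S_a$ — which you correctly identify as ``the technical heart'' — is left unexecuted; the paper's Cases 3 and 4 show that the choice of witness depends delicately on the parities of $x,y,z$ and requires different sequences (sometimes padded by $(\alpha^{-k})^{[j]}$ or $(\alpha^j)^{[k]}$ blocks) in each subcase. As written, the proposal neither reaches the right target nor completes the combinatorial verification.
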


\begin{proof}
	Let $i=2^{i_0}i_1$, $j=2^{j_0}j_1$, and $k=2^{k_0}k_1$, where $i_0,j_0,k_0\in \N_0$ and $i_1,j_1,k_1$ are odd.
	Let $G_1=\{\alpha^i, \alpha^{-j}, \alpha^{k}, \tau\}$ and $G_2=\{\alpha^{-i}, \alpha^j, \alpha^{k}, \tau\}$.
	Then Lemma \ref{new3}.2 implies that $\mathcal B(G_0)$ is weakly Krull if and only if $\mathcal B(G_1)$ is weakly Krull if and only if $\mathcal B(G_2)$ is weakly Krull. Therefore the elements $i,j,k$ are symmetric and hence we may assume that $i_0\ge j_0\ge k_0$. Since $\gcd(i,j,k)=1$, we know $k_0=0$, i.e., $k$ is odd.

	Let $r=\gcd(j,k)$, $s=\gcd(i,k)$, and $t=\gcd(i,j)$. It follows from $\gcd(i,j,k)=1$ that $r,s,t$ are pairwise co-prime.
	We distinguish four cases.

	\medskip
	\noindent{\bf Case 1: } $i$ is even and $j$ is odd.
	
	Since the assertion can not hold in this case, we need to show that $\mathcal B(G_0)$ is not weakly Krull.
	Let $S=(\alpha^{-i})^{[j]}\bdot \tau^{[2]}$. Then $S$ is not a product-one sequence. It follows by Lemma \ref{new3}.1 that
	\[
	S=\frac{(\alpha^{-i})^{[k+j]}\bdot (\alpha^{k})^{[i]}\bdot \tau^{[2]}}{(\alpha^{-i})^{[k]}\bdot (\alpha^{k})^{[i]}}=\frac{(\alpha^{j})^{[i]}\bdot (\alpha^{-i})^{[j]} \bdot \tau^{[4]}}{(\alpha^{j})^{[i]}\bdot \tau^{[2]}}\in \bigcap_{\mathfrak p\in \mathfrak X(\mathcal B(G_2))}\mathcal B(G_2)_{\mathfrak p}\,.
	\]
	Therefore $\mathcal B(G_2)$ is not weakly Krull and hence $\mathcal B(G_0)$ is not weakly Krull.

	\medskip
	\noindent{\bf Case 2: } $i,j$ are both even and 
	$i_0>j_0$.
	
	Since the assertion can not hold in this case, we need to show that $\mathcal B(G_0)$ is not weakly Krull.
	Let $S=(\alpha^{-i})^{[k]}\bdot \tau^{[2]}$. Then $S$ is not a product-one sequence. It follows by Lemma \ref{new3}.1 that 
	\begin{align*}
	S&=\frac{(\alpha^{-i})^{[k]}\bdot (\alpha^{k})^{[i]}\bdot \tau^{[4]}}{ (\alpha^{k})^{[i]}\bdot \tau^{[2]}}\\
	&=\frac{(\alpha^{j})^{[2^{i_0-j_0}i_1]}\bdot (\alpha^{-i})^{[j_1+k]}\bdot \tau^{[2]}}{(\alpha^{j})^{[2^{i_0-j_0}i_1]}\bdot (\alpha^{-i})^{[j_1]}}\in \bigcap_{\mathfrak p\in \mathfrak X(\mathcal B(G_2))}\mathcal B(G_2)_{\mathfrak p}\,.
	\end{align*}
	Therefore $\mathcal B(G_2)$ is not weakly Krull and hence $\mathcal B(G_0)$ is not weakly Krull.

	\medskip
	\noindent{\bf Case 3: }
	$i, j, k$ are odd.

	Suppose $k\neq rs$. Then Lemma \ref{new1} implies that there exist
	$x,y,z\in \N$ with $\gcd(x,y,z)=1$ such that $ix+jy=kz$, $ix'\not\equiv0\pmod k $ for every $x'\in [1,x]$, and $jy'\not\equiv0\pmod k $ for every $y'\in [1,y]$. 
	If $z$ is odd, then $x$ or $y$ must be odd. By symmetry, we may assume that $x$ is odd. Then $y$ is even. Let $S=(\alpha^i)^{[x]}\bdot (\alpha^{-k})^{[z]}\bdot \tau^{[2]}$. Assume to the contrary that $S$ is a product-one sequence. Then there are subsequences $T_1,T_2$ over $\{\alpha^i,\alpha^{-k}\}$ such that $S=T_1\bdot \tau\bdot T_2\bdot \tau$ and $\pi(T_1)=\pi(T_2)$. Suppose $T_1=(\alpha^i)^{[x_0]}\bdot (\alpha^{-k})^{[z_0]}$ and $T_2=(\alpha^i)^{[x-x_0]}\bdot (\alpha^{-k})^{[z-z_0]}$, where $x_0\in [0, x]$ and $z_0\in [0,z]$. Then $ix_0-kz_0=i(x-x_0)-k(z-z_0)$ and hence $i|x-2x_0|\equiv 0\pmod k$, a contradiction to the fact that $ix'\not\equiv0\pmod k $ for every $x'\in [1,x]$.
Therefore $S$ is not a product-one sequence and  it follows by Lemma \ref{new3}.1 that
	\[
	S=\frac{S\bdot (\alpha^i)^{[k]}\bdot (\alpha^{-k})^{[i]}}{(\alpha^i)^{[k]}\bdot (\alpha^{-k})^{[i]}}=\frac{S\bdot (\alpha^j)^{[y]}\bdot \tau^{[2]}}{(\alpha^j)^{[y]}\bdot \tau^{[2]}}\in \bigcap_{\mathfrak p\in \mathfrak X(\mathcal B(G_0))}\mathcal B(G_0)_{\mathfrak p}\,,
	\]
	whence $\mathcal B(G_0)$ is not weakly Krull, a contradiction.
	If $z$ is even, then  $x$ and $y$ must be odd.   
	Let $S=(\alpha^i)^{[x]}\bdot (\alpha^{-k})^{[z+j]}\bdot \tau^{[2]}$. Then $S$ is not a product-one sequence and it follows by Lemma \ref{new3}.1 that
	\[
	S=\frac{S\bdot (\alpha^i)^{[k]}\bdot (\alpha^{-k})^{[i]}}{(\alpha^i)^{[k]}\bdot (\alpha^{-k})^{[i]}}=\frac{S\bdot (\alpha^j)^{[y+k]}\bdot \tau^{[2]}}{(\alpha^j)^{[y+k]}\bdot \tau^{[2]}}\in \bigcap_{\mathfrak p\in \mathfrak X(\mathcal B(G_0))}\mathcal B(G_0)_{\mathfrak p}\,,
	\]
	whence $\mathcal B(G_0)$ is not weakly Krull, a contradiction.
	
	Suppose $i\neq st$. Similarly we can prove that $\mathcal B(G_2)$ is not weakly Krull and hence $\mathcal B(G_0)$ is not weakly Krull, a contradiction.
	Suppose $j\neq rt$. Similarly we can prove that $\mathcal B(G_1)$ is not weakly Krull and hence $\mathcal B(G_0)$ is not weakly Krull, a contradiction.

	\medskip
	\noindent{\bf Case 4: }
	$i=2^{u}i_1$ and $j=2^{u}j_1$ such that $u\in \N$  and $i_1, j_1, k$ odd.
	
	Suppose $k\neq rs$. Then Lemma \ref{new1} implies that there exist
	$x,y,z\in \N$ with $\gcd(x,y,z)=1$ such that $ix+jy=kz$, $ix'\not\equiv0\pmod k $ for every $x'\in [1,x]$, and $jy'\not\equiv0\pmod k $ for every $y'\in [1,y]$. 
	Note that $z$ is even and either  $x$ or $y$ must be odd. By symmetry, we may suppose $x$ is odd. 
	If $y$ is even, we 
	let $S=(\alpha^i)^{[x]}\bdot (\alpha^{-k})^{[z]}\bdot \tau^{[2]}$. Then $S$ is not a product-one sequence. Note that $x+k$ and $z+i$ are both even, whence $(\alpha^{i})^{[x+k]}\bdot (\alpha^{-k})^{[z+i]}\bdot \tau^{[2]}$ is a product-one sequence. It follows by Lemma \ref{new3}.1 that
	\begin{align*}
	S=\frac{(\alpha^{i})^{[x+k]}\bdot (\alpha^{-k})^{[z+i]}\bdot \tau^{[2]}}{ (\alpha^{i})^{[k]}\bdot (\alpha^{-k})^{[i]}}
	=\frac{(\alpha^{i})^{[x]}\bdot (\alpha^j)^{[y]}\bdot (\alpha^{-k})^{[z]}\bdot \tau^{[4]}}{(\alpha^{j})^{[y]}\bdot \tau^{[2]}}\in \bigcap_{\mathfrak p\in \mathfrak X(\mathcal B(G_0))}\mathcal B(G_0)_{\mathfrak p}\,,
	\end{align*}
	whence $\mathcal B(G_0)$ is not weakly Krull, a contradiction.
		If $y$ is odd, we 
	let $S=(\alpha^i)^{[x]}\bdot (\alpha^{-k})^{[z+j]}\bdot \tau^{{2}}$. Then $S$ is not a product-one sequence. It follows by Lemma \ref{new3}.1 that
	\begin{align*}
	S=\frac{(\alpha^{i})^{[x+k]}\bdot (\alpha^{-k})^{[z+i+j]}\bdot \tau^{[2]}}{ (\alpha^{i})^{[k]}\bdot (\alpha^{-k})^{[i]}}
	=\frac{(\alpha^{i})^{[x]}\bdot (\alpha^j)^{[y+k]}\bdot (\alpha^{-k})^{[z+j]}\bdot \tau^{[4]}}{(\alpha^{j})^{[y+k]}\bdot \tau^{[2]}}\in \bigcap_{\mathfrak p\in \mathfrak X(\mathcal B(G_0))}\mathcal B(G_0)_{\mathfrak p}\,,
	\end{align*}
	whence $\mathcal B(G_0)$ is  not weakly Krull, a contradiction.

	Suppose $i\neq st$.  Then Lemma \ref{new1} implies that there exist
	$x,y,z\in \N$ with $\gcd(x,y,z)=1$ such that $ix=jy+kz$, $kz'\not\equiv0\pmod i $ for every $z'\in [1,z]$, and $jy'\not\equiv0\pmod i $ for every $y'\in [1,y]$. 
	Note that $z$ is even and either  $x$ or $y$ must be odd. 
	If $x$ is odd and $y$ is even, we 
	let $S=(\alpha^{-i})^{[x]}\bdot (\alpha^{k})^{[z]}\bdot \tau^{[2]}$. Then $S$ is not a product-one sequence. It follows by Lemma \ref{new3}.1 that
	\begin{align*}
	S=\frac{(\alpha^{-i})^{[x+k]}\bdot (\alpha^{k})^{[z+i]}\bdot \tau^{[2]}}{ (\alpha^{-i})^{[k]}\bdot (\alpha^{k})^{[i]}}
	=\frac{(\alpha^{-i})^{[x]}\bdot (\alpha^j)^{[y]}\bdot (\alpha^{k})^{[z]}\bdot \tau^{[4]}}{(\alpha^{j})^{[y]}\bdot \tau^{[2]}}\in \bigcap_{\mathfrak p\in \mathfrak X(\mathcal B(G_2))}\mathcal B(G_2)_{\mathfrak p}\,,
	\end{align*}
	whence $\mathcal B(G_2)$ is  not weakly Krull. Then  $\mathcal B(G_0)$ is  not weakly Krull, a contradiction.
	If $x$ and $y$ are both odd, we 
	let $S=(\alpha^{-i})^{[x]}\bdot (\alpha^{j})^{[y]}\bdot \tau^{[2]}$. Then $S$ is not a product-one sequence. It follows by Lemma \ref{new3}.1 that 
	\begin{align*}
	S=\frac{(\alpha^{-i})^{[x+j_1]}\bdot (\alpha^{j})^{[y+i_1]}\bdot \tau^{[2]}}{ (\alpha^{-i})^{[j_1]}\bdot (\alpha^{j})^{[i_1]}}
	=\frac{(\alpha^{-i})^{[x]}\bdot (\alpha^j)^{[y]}\bdot (\alpha^{k})^{[z]}\bdot \tau^{[4]}}{(\alpha^{k})^{[z]}\bdot \tau^{[2]}}\in \bigcap_{\mathfrak p\in \mathfrak X(\mathcal B(G_2))}\mathcal B(G_2)_{\mathfrak p}\,,
	\end{align*}
	whence $\mathcal B(G_2)$ is  not weakly Krull. Then  $\mathcal B(G_0)$ is  not weakly Krull, a contradiction.
	If $x$ is even and $y$ is odd,  we 
	let $S=(\alpha^{-i})^{[x+k]}\bdot (\alpha^{j})^{[y]}\bdot \tau^{[2]}$. Then $S$ is not a product-one sequence. It follows by Lemma \ref{new3}.1 that
	\begin{align*}
	S=\frac{(\alpha^{-i})^{[x+k+j_1]}\bdot (\alpha^{j})^{[y+i_1]}\bdot \tau^{[2]}}{ (\alpha^{-i})^{[j_1]}\bdot (\alpha^{j})^{[i_1]}}
	=\frac{(\alpha^{-i})^{[x+k]}\bdot (\alpha^j)^{[y]}\bdot (\alpha^{k})^{[z+i]}\bdot \tau^{[4]}}{(\alpha^{k})^{[z+i]}\bdot \tau^{[2]}}\in \bigcap_{\mathfrak p\in \mathfrak X(\mathcal B(G_2))}\mathcal B(G_2)_{\mathfrak p}\,.
	\end{align*}
	Then $\mathcal B(G_2)$ is not weakly Krull and hence $\mathcal B(G_0)$ is  not weakly Krull, a contradiction.

	Suppose $j\neq rt$. Similarly, we can prove that $\mathcal B(G_1)$ is not weakly Krull and hence $\mathcal B(G_0)$ is  not weakly Krull, a contradiction.	
\end{proof}

\begin{proposition}\label{new5}Let $G$ be an infinite dihedral group, say $G = \langle \alpha, \tau \colon \tau^2=1, \alpha \tau = \tau \alpha^{-1} \rangle$.
	Let $I$ be a set of positive integers with $|I|\ge 2$  and $\gcd(I)=1$, let $J\subseteq I$ be a subset with $1\le |J|<|I|$, and let $G_0=\{\tau\}\cup \{\alpha^i\colon i\in I\setminus J\}\cup \{\alpha^{-j}\colon j\in J\}$. Then $\mathcal B(G_0)$ is weakly Krull if and only if  there exist pairwise co-prime positive integers $b_i$, for $i\in I$, such that $kb_k=\prod_{i\in I}b_i$ for every $k\in I$.
\end{proposition}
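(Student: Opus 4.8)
The plan is to translate weak Krullness into the number-theoretic conditions of Lemmas \ref{new1}, \ref{new2} and \ref{new4}, using the description of minimal primes and localizations in Lemma \ref{new3}.1 and the sign-flip invariance in Lemma \ref{new3}.2. Write $n=|I|$ and list $I=\{a_1,\dots,a_n\}$. Recall from Claim~A in the proof of Theorem \ref{5.1}.1 that a sequence over $G_0$ with no $\tau$ is product-one iff its signed $\alpha$-exponent sum is $0$, while one with $\mathsf v_\tau\ge 2$ even is product-one iff the multiset of its $\alpha$-exponents splits into two parts of equal sum (an odd $\tau$-count never occurs). Both conditions are unchanged if all $\alpha$-exponents are scaled by a fixed $d\in\N$, so scaling induces an isomorphism $\mathcal B(\{\alpha^{c_1},\dots,\alpha^{c_m},\tau\})\cong\mathcal B(\{\alpha^{dc_1},\dots,\alpha^{dc_m},\tau\})$; I use this to pass to coprime triples.

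For $(\Rightarrow)$ assume $\mathcal B(G_0)$ is weakly Krull. If $n=2$ the $b_i$ exist trivially (take each equal to the other element of $I$, coprimality being $\gcd(I)=1$), so let $n\ge 3$ and fix distinct $i,j,k\in I$. By Lemma \ref{new3}.2 with $K_1=\{k\}$, $K_2=\emptyset$, the sign-normalized set $G_0'=\{\tau\}\cup\{\alpha^m:m\in I\setminus\{k\}\}\cup\{\alpha^{-k}\}$ is weakly Krull. I then descend to the divisor-closed submonoid $\mathcal B(G_1')$ with $G_1'=\{\alpha^i,\alpha^j,\alpha^{-k},\tau\}$: using Lemma \ref{new3}.1 for both sets, any $T\in\mathcal F(G_1')$ lying in every localization $\mathcal B(G_1')_{\mathfrak p_a}$ lies in every $\mathcal B(G_0')_{\mathfrak p_b}$ as well (for $b\in G_0'\setminus G_1'$ the multipliers already avoid $b$), hence in $\mathcal B(G_0')=\bigcap_b\mathcal B(G_0')_{\mathfrak p_b}$ and, being supported on $G_1'$, in $\mathcal B(G_1')$; so $\mathcal B(G_1')$ is weakly Krull. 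Dividing by $d=\gcd(i,j,k)$ and applying Lemma \ref{new4} gives pairwise coprime $r,s,t$ with $i/d=st$, $j/d=rt$, $k/d=rs$; since the relations of Lemma \ref{new2} are homogeneous, they hold for $i,j,k$. As the triple was arbitrary, Lemma \ref{new2} yields the required $b_i$ with $kb_k=\gcd(I)\prod_{i\in I}b_i=\prod_{i\in I}b_i$.

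For $(\Leftarrow)$ assume the $b_i$ exist, so $a_\ell=\prod_{m\ne\ell}b_m$. Since $1\le|J|<|I|$, $G_0$ has both a positive and a negative exponent, and because the sign flips of Lemma \ref{new3}.2 are reversible, all such configurations are equivalent for weak Krullness; so I may fix a convenient one and verify the criterion of Lemma \ref{new3}.1 directly. Given $T\in\bigcap_a\mathcal B(G_0)_{\mathfrak p_a}\subseteq\mathcal F(G_0)$, localization at $\mathfrak p_\tau$ forces $\mathsf v_\tau(T)$ and the exponent sum $\sigma(T)$ to be even (a $\tau$-free multiplier is zero-sum and cannot repair an odd count or odd sum); if $\mathsf v_\tau(T)=0$ then $T$ is already product-one. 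The remaining danger is a $T$ with $\mathsf v_\tau(T)\ge 2$ and $\sigma(T)$ even whose exponent multiset has no subset of sum $\sigma(T)/2$. Rewriting ``$E$ splits'' as solvability of $\sum_\ell\epsilon_\ell a_\ell z_\ell=0$ with $z_\ell\equiv x_\ell\pmod 2$ and $|z_\ell|\le x_\ell$, the multipliers supplied by the localizations yield a relaxed crossing solution; Lemma \ref{new1} then shows such a crossing can exist only when some triple relation $a_\ell=\gcd(a_\ell,a_p)\gcd(a_\ell,a_q)$ fails, contradicting $a_\ell=\prod_{m\ne\ell}b_m$.

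The backward implication is the main obstacle. The forward direction is essentially bookkeeping once the divisor-closed descent and the scaling isomorphism are in place, after which Lemmas \ref{new4} and \ref{new2} finish it. The backward direction has no ready lemma asserting ``no bad element exists''; it requires converting the whole family of localization conditions---each avoiding multiplier possibly itself carrying $\tau$'s---into subset-sum and Diophantine statements, and then using pairwise coprimality of the $b_i$ together with the solvability dichotomy of Lemma \ref{new1} to rule out a nontrivial element of $\bigcap_a\mathcal B(G_0)_{\mathfrak p_a}\setminus\mathcal B(G_0)$. I would argue by contradiction from a minimal such $T$, isolating the three indices carrying its ``defect'' and reversing the explicit constructions in the proof of Lemma \ref{new4}.
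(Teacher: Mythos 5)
Your forward direction is essentially the paper's argument: descend to a four-element subset $\{\tau,\alpha^i,\alpha^j,\alpha^{-k}\}$ via Lemma \ref{new3} (the divisor-closed descent you sketch is correct, since the multipliers for the small set already avoid the extra generators of $G_0$), rescale by $\gcd(i,j,k)$, apply Lemma \ref{new4}, and assemble the $b_i$ with Lemma \ref{new2}. That half is sound.

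The backward direction is where the genuine gap lies, and your own description concedes it is only a plan. The paper's proof of $(\Leftarrow)$ rests on an explicit membership criterion (its Claim C): for $S$ with $\mathsf v_\tau(S)$ positive and even, $S\in\mathcal B(G_0)$ if and only if each $\mathsf v_i(S)=\mathsf v_{\alpha^i}(S)+\mathsf v_{\alpha^{-i}}(S)$ can be written as $x_ib_i+2y_i$ with $\sum_{i\in I}x_i$ \emph{even}. The obstruction to being product-one is therefore a single parity condition coupling \emph{all} indices of $I$ simultaneously, not the failure of a divisibility relation among three of them; Lemma \ref{new1}, which is a statement about one coprime triple, cannot detect it, and ``isolating the three indices carrying the defect and reversing Lemma \ref{new4}'' does not address it. What the paper actually does is extract representations $\mathsf v_i=x_ib_i+2y_i$ from each of the localization multipliers ($S_\tau$ avoiding $\tau$, $S_i$ avoiding $\alpha^{\pm i}$, and, when some $b_{i_0}$ is even, an additional multiplier $S_0$) and compare the parities of the resulting $\sum_i x_i$, splitting into the cases ``all $b_i$ odd'' and ``some $b_{i_0}$ even''; the second case is delicate (one must force $x_{i_0}=0$ by normalizing $\mathsf v_{i_0}(S)=r_{i_0}b_{i_0}+2s_{i_0}$ with $2s_{i_0}<b_{i_0}$). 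None of this is present in, or recoverable from, your sketch, so the $(\Leftarrow)$ implication remains unproved. (A minor further caution: evenness of $\mathsf v_\tau(T)$ and of the signed exponent sum, which you do extract correctly from the $\mathfrak p_\tau$-localization, is far from sufficient for membership.)
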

\begin{proof}
	Suppose  $\mathcal B(G_0)$ is weakly Krull. If $|I|=2$, say $I=\{i,j\}$, then the assertion follows by letting $b_i=j$ and $b_j=i$. Now suppose $|I|\ge 3$.
	 Let $\{i,j,k\}$ be a subset of $I$. By Lemma \ref{new3}.2, we may assume that $i,j\in I\setminus J$ and $k\in J$.
	If $\mathcal B(\{\tau, \alpha^i,\alpha^j,\alpha^{-k}\})$ is not weakly Krull, then by Lemma \ref{new3}.1 there exist $S\in \mathcal F(\{\tau, \alpha^i,\alpha^j,\alpha^{-k}\})\setminus \mathcal B(\{\tau, \alpha^i,\alpha^j,\alpha^{-k}\})$  and $S_a\in \mathcal B(\{\tau, \alpha^i,\alpha^j,\alpha^{-k}\})$ with $a\not\in\supp(S_a)$ for all $a\in \{\tau, \alpha^i,\alpha^j,\alpha^{-k}\}$ such that $S\bdot S_a\in \mathcal B(\{\tau, \alpha^i,\alpha^j,\alpha^{-k}\})$ for every $a\in \{\tau, \alpha^i,\alpha^j,\alpha^{-k}\}$.
	 Since $S\in \mathcal F(G_0)\setminus \mathcal B(G_0)$ and $\supp(S_{\tau})\cap (G_0\setminus \{\tau, \alpha^i,\alpha^j,\alpha^{-k}\})=\emptyset$, we obtain that
	 $S\in \bigcap_{\mathfrak p\in \mathfrak X(\mathcal B(G_0))}\mathcal B(G_0)_{\mathfrak p}\setminus \mathcal B(G_0)$, whence $\mathcal B(G_0)$ is not weakly Krull, a contradiction.
	 Thus $\mathcal B(\{\tau, \alpha^i,\alpha^j,\alpha^{-k}\})$ is  weakly Krull. Let $d=\gcd(i,j,k)$. Then $\mathcal B(\{\tau, \alpha^{i/d},\alpha^{j/d},\alpha^{-k/d}\})$ is also weakly Krull.
	 It follows by Lemma \ref{new4} that there exist pairwise co-prime positive integers $r,s,t$ such that $i/d=st$, $j/d=rt$, and $k/d=rs$, whence $\gcd(j,k)=dr$, $\gcd(i,j)=dt$, and $\gcd(i,k)=ds$. Therefore $i=dst=\frac{d^3str^2}{d^2r^2}=\frac{\gcd(i,j,k)jk}{\gcd(j,k)^2}$, $j=drt=\frac{d^3rts^2}{d^2s^2}=\frac{\gcd(i,j,k)ik}{\gcd(i,k)^2}$,
	  and $k=drs=\frac{d^3rst^2}{d^2t^2}=\frac{\gcd(i,j,k)ij}{\gcd(i,j)^2}$.
	  The assertion follows by Lemma  \ref{new2}.

Suppose there exist pairwise co-prime positive integers $b_i$, for $i\in I$, such that $kb_k=\prod_{i\in I}b_i$ for every $k\in I$.
Then $\gcd(I\setminus\{i\})=b_i$ for all $i\in I$.
For every sequence $T\in \mathcal F(G_0)$, we define $\mathsf v_i(T)=\mathsf v_{\alpha^i}(T)+\mathsf v_{\alpha^{-i}}(T)$ (note that $\mathsf v_{\alpha^i}(T)=0$ or $\mathsf v_{\alpha^{-i}}(T)=0$) for all $i\in I$. 

\medskip
\noindent
{\bf Claim C: }{\it Let $S\in \mathcal F(G_0)$ such that $\mathsf v_{\tau}(S)$ is a positive even integer.  Then  $S\in \mathcal B(G_0)$ if and only if there exist $x_i,y_i\in \N_0$, for $i\in I$, such that $\mathsf v_i(S)=x_ib_i+2y_i$ for every $i\in I$ and $\sum_{i\in I}x_i$ is even.} 

\medskip
\noindent{\it Proof of Claim C. } $(\Rightarrow)$
Suppose $S\in \mathcal B(G_0)$. There exist subsequences $T_1,T_2$ over $G_0\setminus\{\tau\}$ such that $S=T_1\bdot \tau\bdot T_2\bdot \tau^{[\mathsf v_{\tau}(S)-1]}$ and $\pi(T_1)=\pi(T_2)$. 
Suppose $T_1=\prod_{i\in I\setminus J}(\alpha^i)^{[k_i]} \bdot \prod_{j\in  J}(\alpha^{-j})^{[k_j]}$ and $T_2=\prod_{i\in I\setminus J}(\alpha^i)^{[r_i]} \bdot \prod_{j\in  J}(\alpha^{-j})^{[r_j]}$. Then $\sum_{i\in I\setminus J}ik_i-\sum_{j\in J}jk_j=\sum_{i\in I\setminus J}ir_i-\sum_{j\in J}jr_j$, whence $b_{\ell}=\gcd(I\setminus \{\ell\})$ divides $\ell(k_{\ell}-r_{\ell})$ for every $\ell\in I$. Since $\gcd(I)=1$, we obtain that $\gcd(b_{\ell}, \ell)=1$ and hence $b_{\ell}\t (k_{\ell}-r_{\ell})$  for every $\ell\in I$. Let $x_{\ell}'=(k_{\ell}-r_{\ell})/b_{\ell}$ and $x_{\ell}=|x_{\ell}'|$ for every $\ell\in I$. Then 
$\mathsf v_{\ell}(S)=\mathsf v_{\ell}(T_1\bdot T_2)=k_{\ell}+r_{\ell}=x_{\ell}b_{\ell}+2\min\{k_{\ell}, r_{\ell}\}$ for every $\ell\in I$. Since
 $$\sum_{i\in I\setminus J}ib_ix_i'=\sum_{i\in I\setminus J}i(k_i-r_i)=\sum_{j\in  J}j(k_j-r_j)=\sum_{j\in J}jb_jx_j'\,,$$
and $ib_i=jb_j$ for all $i,j\in I$, we obtain that $\sum_{i\in I\setminus J}x_i'-\sum_{j\in J}x_j'=0$, whence $$\sum_{\ell\in I}x_{\ell}\equiv \sum_{\ell\in I}x_{\ell}'\equiv \sum_{i\in I\setminus J}x_i'-\sum_{j\in J}x_j'\equiv 0\pmod 2\,.$$

$(\Leftarrow)$ Suppose there exist $x_i,y_i\in \N_0$, for $i\in I$, such that $\mathsf v_i(S)=x_ib_i+2y_i$ for every $i\in I$ and $\sum_{i\in I}x_i$ is even. Then there exist $c_i,d_i\in \N_0$, for $i\in I$, such that $\sum_{i\in I}c_i=\sum_{i\in I}d_i$ and $c_i+d_i=x_i$ for every $i\in I$.
Let $$T_1={\small \prod_{i\in I\setminus J}}(\alpha^i)^{[c_ib_i+y_i]}\bdot \prod_{j\in J}(\alpha^{-j})^{[d_jb_j+y_j]} \quad \text{ and } \quad T_2=\prod_{i\in I\setminus J}(\alpha^i)^{[d_ib_i+y_i]}\bdot \prod_{j\in J}(\alpha^{-j})^{[c_jb_j+y_j]}\,.$$ Then $\pi(T_1)=\pi(T_2)$ and hence $S=T_1\bdot \tau\bdot T_2\bdot\tau^{[\mathsf v_{\tau}(S)-1]}$ is a product-one sequence.

\qed[Claim C.]
	
%Then 
%\begin{align*}
%\mathcal A(G_0)=&\{(\alpha^i)^{[b_i]}\bdot (\alpha^{j})^{[b_j]}\bdot \tau^{[2]}\colon i,j\in I\setminus J\}\cup \{(\alpha^{-i})^{[b_i]}\bdot (\alpha^{-j})^{[b_j]}\bdot \tau^{[2]}\colon i,j\in  J\}\cup\\
%&\{(\alpha^i)^{[b_i]}\bdot (\alpha^{-j})^{[b_j]}\colon i\in I\setminus J \text{ and } j\in J \}\cup\\
%& \{\tau^{[2]}\}\cup \{(\alpha^i)^{[2n]}\bdot \tau^{[2]}\colon i\in I\setminus J, n\in\N \}\cup \{(\alpha^{-j})^{[2n]}\bdot \tau^{[2]}\colon j\in  J, n\in\N \}\,.
%\end{align*}
%For every sequence $T\in \mathcal F(G_0)$, we define $\mathsf v_i(T)=\mathsf v_{\alpha^i}(T)+\mathsf v_{\alpha^{-i}}(T)$ for all $i\in I$. Let $S\in \mathcal F(G_0)$ such that $\mathsf v_{\tau}(S)$ is a positive even integer. In view the forms of all the atoms, 
%it follows that  $S\in \mathcal B(G_0)$ if and only if there exist $x_i,y_i\in \N_0, i\in I$  such that $\mathsf v_i(S)=x_ib_i+2y_i$ for every $i\in I$ and $\sum_{i\in I}x_i$ is even.

\medskip
Let $S\in \cap_{\mathfrak p\in \mathfrak X(\mathcal B(G_0))}\mathcal B(G_0)_{\mathfrak p}$. Then Lemma \ref{new3}.1 implies that $S\in  \mathcal F(G_0)$. It suffices to show that $S$ is a product-one sequence.

Since there exists $S_{\tau}\in \mathcal B(G_0)$ with $\tau\not\in \supp(S_{\tau})$  such that $S\bdot S_{\tau}$ is a product-one sequence, we obtain that $\mathsf v_\tau(S)$ is even. If $\mathsf v_\tau(S)=0$, then $S$ is a product-one sequence. Now we suppose $\mathsf v_{\tau}(S)$ is a positive even integer. For every $i\in I$, there exists $S_i\in \mathcal B(G_0)$ with $\{\alpha^i, \alpha^{-i}\}\cap \supp(S_i)=\emptyset$ such that $S\bdot S_i$ is a product-one sequence.
Then {\bf Claim C} implies that
 $\mathsf v_i(S)=\mathsf v_i(S\bdot S_i)=r_ib_i+2s_i$ for some $r_i,s_i\in \N_0$ and $\mathsf v_i(S_{\tau})=\mathsf v_i(S_{\tau}\bdot \tau^{[2]})=r_i'b_i+2s_i'$ for some $r_i',s_i'\in \N_0$ with $\sum_{i\in I}r_i'$ even.

\medskip
\noindent{\bf Case 1:}  The $b_i$ are odd for all $i\in I$.

Since $S\bdot S_{\tau}$ is a product-one sequence, it follows by {\bf Claim C} that there exist $x_i,y_i\in \N_0$, for $i\in I$, such that $\mathsf v_i(S\bdot S_{\tau})=x_ib_i+2y_i$ for every $i\in I$ and $\sum_{i\in I}x_i$ is even. Therefore $$\sum_{i\in I}{r_i}\equiv \sum_{i\in I}\mathsf v_i(S)\equiv \sum_{i\in I}\mathsf v_i(S)+\sum_{i\in I}r_i'\equiv \sum_{i\in I}\mathsf v_i(S\bdot S_{\tau})\equiv \sum_{i\in I}x_i\equiv 0\pmod 2\,,$$
whence $S$ is a product-one sequence by {\bf Claim C}.

\medskip
\noindent{\bf Case 2:} There exists $i_0\in I$ such that $b_{i_0}$ is even.

Then $b_j$ is odd for all $j\in I\setminus \{i_0\}$ and $\mathsf v_{i_0}(S)=r_{i_0}b_{i_0}+2s_{i_0}$ is even. We may suppose $\mathsf v_{i_0}(S)=r_{i_0}b_{i_0}+2s_{i_0}$ with $2s_{i_0}<b_{i_0}$, where $r_{i_0},s_{i_0}\in \N_0$. If $\sum_{i\in I\setminus\{i_0\}}r_i$ is even, then $S\bdot (\alpha^{i_0})^{[-\mathsf v_{i_0}(S)]}$ is a product-one sequence by {\bf Claim C}. It follows by the fact that $\mathsf v_{i_0}(S)$ is even that $S$ is a product-one sequence. If $\sum_{i\in I\setminus\{i_0\}}r_i$ is odd and $r_{i_0}\ge 1$, then $S\bdot (\alpha^{i_0})^{[-\mathsf v_{i_0}(S)+b_{i_0}]}$ is a product-one sequence by {\bf Claim C}. It follows by the fact that $\mathsf v_{i_0}(S)-b_{i_0}$ is even that $S$ is a product-one sequence.

 Suppose $\sum_{i\in I\setminus\{i_0\}}r_i$ is odd and $r_{i_0}=0$.
 Let $S_{0}$ be a product-one sequence with $\mathsf v_{i_0}(S_{0})=0$ such that $S\bdot S_{0}$ is a product-one sequence. Then {\bf Claim C} implies that $\mathsf v_i(S_{0})=\mathsf v_i(S_0\bdot \tau^{[2]})=r_i''b_i+2s_i''$ for some $r_i'', s_i''\in \N_0$ with $\sum_{i\in I}r_i''$  even and that there exist $x_i,y_i\in \N_0$, for $i\in I$, such that $\mathsf v_i(S\bdot S_{0})=x_ib_i+2y_i$ for every $i\in I$ and $\sum_{i\in I}x_i$ is even. Since $\mathsf v_{i_0}(S\bdot S_{0})=\mathsf v_{i_0}(S)=2s_{i_0}<b_{i_0}$, we obtain that $x_{i_0}=0$.
  Therefore
$$\sum_{i\in I\setminus\{i_0\}}r_i\equiv \sum_{i\in I}\mathsf v_i(S)\equiv \sum_{i\in I}\mathsf v_i(S)+\sum_{i\in I}r_i''\equiv \sum_{i\in I}\mathsf v_i(S\bdot S_{0})\equiv \sum_{i\in I\setminus\{i_0\}}x_i\equiv \sum_{i\in I}x_i\equiv0\pmod 2\,,$$
 a contradiction.
	\end{proof}

For the notions that are used in the next proposition but are not introduced in this work we refer to \cite{HK98}.

\begin{proposition}\label{product:weaklyKrull}
Let $H_1$ and $H_2$ be monoids. Then $H_1\times H_2$ is weakly Krull if and only if $H_1$ and $H_2$ are both weakly Krull.
\end{proposition}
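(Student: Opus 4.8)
The plan is to reduce the equivalence to an explicit description of the minimal prime $s$-ideals and of the associated localizations of $H_1 \times H_2$, and then to verify the two defining conditions of a weakly Krull monoid factor by factor. Throughout I would use that $\mathsf q(H_1 \times H_2) = \mathsf q(H_1) \times \mathsf q(H_2)$ and that, since every prime $s$-ideal is proper, $1 \notin \mathfrak p$ for all $\mathfrak p \in s\text{-}\spec$.

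First I would determine $s\text{-}\spec(H_1 \times H_2)$. For a prime $s$-ideal $\mathfrak p$ put $P_1 = \{a_1 \in H_1 \colon (a_1, 1) \in \mathfrak p\}$ and $P_2 = \{a_2 \in H_2 \colon (1, a_2) \in \mathfrak p\}$. Writing $(a_1, a_2) = (a_1, 1)(1, a_2)$ and using that $\mathfrak p$ is an $s$-ideal together with primality, I would show that $P_1, P_2$ are prime $s$-ideals (possibly empty) and that $\mathfrak p = (P_1 \times H_2) \cup (H_1 \times P_2)$. Since $P_1 \times H_2$ is itself a proper nonempty prime $s$-ideal whenever $P_1 \neq \emptyset$ (its complement $(H_1 \setminus P_1) \times H_2$ is a submonoid), any $\mathfrak p$ with both $P_1, P_2$ nonempty strictly contains $P_1 \times H_2$ and hence is not minimal; therefore
\[
\mathfrak X(H_1 \times H_2) = \{\mathfrak p_1 \times H_2 \colon \mathfrak p_1 \in \mathfrak X(H_1)\} \cup \{H_1 \times \mathfrak p_2 \colon \mathfrak p_2 \in \mathfrak X(H_2)\},
\]
a disjoint union. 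I expect this classification, and in particular the minimality verification, to be the main obstacle, as it is the only step requiring genuine structural argument rather than routine computation.

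Next I would compute the localizations. Because $(H_1 \times H_2) \setminus (\mathfrak p_1 \times H_2) = (H_1 \setminus \mathfrak p_1) \times H_2$, inspecting quotients gives
\[
(H_1 \times H_2)_{\mathfrak p_1 \times H_2} = (H_1)_{\mathfrak p_1} \times \mathsf q(H_2), \qquad (H_1 \times H_2)_{H_1 \times \mathfrak p_2} = \mathsf q(H_1) \times (H_2)_{\mathfrak p_2}.
\]
Intersecting over all minimal primes, and reading the empty intersection as the ambient quotient group, I would obtain
\[
\bigcap_{\mathfrak p \in \mathfrak X(H_1 \times H_2)} (H_1 \times H_2)_{\mathfrak p} = \Big(\bigcap_{\mathfrak p_1 \in \mathfrak X(H_1)} (H_1)_{\mathfrak p_1}\Big) \times \Big(\bigcap_{\mathfrak p_2 \in \mathfrak X(H_2)} (H_2)_{\mathfrak p_2}\Big).
\]
As $H_i$ is always contained in the corresponding factor intersection, the product equals $H_1 \times H_2$ precisely when each factor intersection collapses to $H_i$; the nontrivial direction follows by testing with elements $(a_1, 1)$ and $(1, a_2)$. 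This settles the first defining condition in both directions.

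Finally, for the finiteness condition I would fix $a = (a_1, a_2) \in H_1 \times H_2$ and note that $a \in \mathfrak p_1 \times H_2$ iff $a_1 \in \mathfrak p_1$, while $a \in H_1 \times \mathfrak p_2$ iff $a_2 \in \mathfrak p_2$. Hence
\[
|\{\mathfrak p \in \mathfrak X(H_1 \times H_2) \colon a \in \mathfrak p\}| = |\{\mathfrak p_1 \in \mathfrak X(H_1) \colon a_1 \in \mathfrak p_1\}| + |\{\mathfrak p_2 \in \mathfrak X(H_2) \colon a_2 \in \mathfrak p_2\}|,
\]
which is finite for all $a$ exactly when each factor satisfies the finiteness condition; the forward direction is obtained by specializing to $a = (a_1, 1)$ and $a = (1, a_2)$ and using $1 \notin \mathfrak p_2$ and $1 \notin \mathfrak p_1$. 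Combining the two conditions yields the claimed equivalence.
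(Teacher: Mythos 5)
Your proposal is correct, but it takes a genuinely different route from the paper. You work directly with the definition of weakly Krull stated in the preliminaries: you classify all prime $s$-ideals of $H_1\times H_2$ via the observation that their complements are divisor-closed submonoids, which splits them as $(P_1\times H_2)\cup(H_1\times P_2)$, extract $\mathfrak X(H_1\times H_2)=\{\mathfrak p_1\times H_2\colon \mathfrak p_1\in\mathfrak X(H_1)\}\cup\{H_1\times\mathfrak p_2\colon \mathfrak p_2\in\mathfrak X(H_2)\}$, compute the localizations as $(H_1)_{\mathfrak p_1}\times\mathsf q(H_2)$ and $\mathsf q(H_1)\times(H_2)_{\mathfrak p_2}$, and then check the intersection and finiteness conditions factorwise. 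The paper instead routes everything through $t$-ideals: it first proves that every nonempty $t$-ideal of the product splits as a product of its projections (their Claim A), uses \cite[Proposition 11.6]{HK98} to identify the minimal prime $s$-ideals with minimal prime $t$-ideals and obtain the same description of $\mathfrak X(H_1\times H_2)$ (their Claim B), and then invokes the characterization of weakly Krull monoids via primary decompositions of principal ideals \cite[Theorem 22.7]{HK98}. Your argument is more elementary and self-contained, needing only the definition and the bijection between prime $s$-ideals and divisor-closed submonoids; the paper's argument buys a cleaner transfer of the primary components of $aH_1$ and $bH_2$ to those of $(a,b)(H_1\times H_2)$ at the cost of importing the $t$-ideal machinery. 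The only place where your sketch is thin is the minimality verification you yourself flag: one should note explicitly that a nonempty prime contained in $\mathfrak p_1\times H_2$ has the form $(Q_1\times H_2)\cup(H_1\times Q_2)$ with $Q_2=\emptyset$ (since $1\notin\mathfrak p_1$ forces $H_1\times Q_2\not\subseteq\mathfrak p_1\times H_2$ unless $Q_2=\emptyset$) and $\emptyset\neq Q_1\subseteq\mathfrak p_1$, so minimality of $\mathfrak p_1$ gives equality; with that spelled out the proof is complete.
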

\begin{proof}

Let $H=H_1\times H_2$.
We need the following claims.

\smallskip
\noindent \textbf{Claim A. } 
{\it Let $I$ be a nonempty $t$-ideal of $H$. Then $I=I_1\times I_2$, where
\begin{align*}
I_1&=\{x\in H_1\colon \text{ there exists } y\in H_2 \text{ such that } (x,y)\in I\} \\
\text{ and }
I_2&=\{y\in H_2\colon \text{ there exists } x\in H_1 \text{ such that }(x,y)\in I\}\,.
\end{align*}
Furthermore, if $I$ is a prime $s$-ideal, then $I_1=H_1$ or $I_2=H_2$.}

\begin{proof}[Proof of Claim A]
Clearly $I\subseteq I_1\times I_2$. It remains to prove the converse inclusion. Let $(a,b)\in I_1\times I_2$. Then there exist $c\in H_2$ and $d\in H_1$ such that $(a,c),(d,b)\in I$. Let $(x,y)\in (H\colon \{(a,c),(d,b)\})$. Then $(ax,cy), (dx,by)\in H$, whence $ax\in H_1$ and $by\in H_2$. It follows that $(a,b)(x,y)\in H$ and hence $(a,b)\in \{(a,c),(d,b)\}_v$.
 Since $I$ is a $t$-ideal, we have $(a,b)\in \{(a,c),(d,b)\}_v\subset I$. 
 
 For the "furthermore" part, suppose $I$ is a prime $s$-ideal.  Let $(a,b)\in I$. Then $(a,1)(1,b)\in I$ implies that $(a,1)\in I$ or $(1,b)\in I$. If $(a,1)\in I$, then $1\in I_2$ and hence $I_2=H_2$. If $(1,b)\in I$, then $1\in I_1$ and hence $I_1=H_1$.
\qedhere[Proof of Claim A.]
\end{proof}

\smallskip
\noindent \textbf{Claim B. }{\it  $\mathfrak X(H)=\{P_1\times H_2\colon P_1\in\mathfrak X(H_1)\}\cup \{H_1\times P_2\colon P_2\in\mathfrak X(H_2)\}$. }
 
 \begin{proof}[Proof of Claim B]
 We first show that $\mathfrak X(H)\supset\{P_1\times H_2\colon P_1\in\mathfrak X(H_1)\}$. 
 Let $P_1\in \mathfrak X(H_1)$. It is clear that $P_1\times H_2$ is a prime $s$-ideal of $H$. By \cite[Proposition 11.6(ii)]{HK98} it follows that $P_1\times H_2$ contains a nonemtpy prime $t$-ideal $I$. We will show that $P_1\times H_2$ equals every nonempty prime $t$-ideal it contains. Then $P_1\times H_2$ is a minimal prime $t$-ideal. By using \cite[Proposition 11.6(ii)]{HK98} again, we obtain that $P_1\times H_2\in\mathfrak X(H)$.
 Let $I\subset P_1\times H_2$ be a nonempty prime $t$-ideal. Then Claim A implies $I=I_1\times H_2$, where $I_1\subset P_1$ is a prime  $s$-ideal.
 Hence the minimality of $P_1$ implies that $I_1=P_1$. 
 
 By symmetry we can show that $\mathfrak X(H)\supset\{H_1\times P_2\colon P_2\in\mathfrak X(H_2)\}$. Now we prove that $\mathfrak X(H)\subset\{P_1\times H_2\colon P_1\in\mathfrak X(H_1)\}\cup \{H_1\times P_2\colon P_2\in\mathfrak X(H_2)\}$. Let $I\in \mathfrak X(H)$.  Then \cite[Proposition 11.6(iii)]{HK98} shows that $I$ is a $t$-ideal and Claim A implies that $I=I_1\times H_2$ or $I=H_1\times I_2$, where $I_1\subset H_1$ and $I_2\subset H_2$ are prime $s$-ideals. The minimality of $I$ and $\mathfrak X(H)\supset\{P_1\times H_2\colon P_1\in\mathfrak X(H_1)\}\cup \{H_1\times P_2\colon P_2\in\mathfrak X(H_2)\}$  imply that $I_1$ and $I_2$ must be minimal, and we complete the proof.
 \qedhere[Proof of Claim B.]	 
 	\end{proof}

Suppose $H_1$ and $H_2$ are weakly Krull.  
Let $(a,b)\in H\setminus H^\times$. Then $a\notin H_1^\times$ or $b\notin H_2^\times$, say $a\notin H_1^\times$. Then by \cite[Theorem 22.7]{HK98} there exist primary ideals $Q_1,\ldots ,Q_n$ of $H_1$ such that $aH_1=Q_1\cap\ldots\cap Q_n$ and $\sqrt{Q_i}=P_i\in \mathfrak X(H_1)$ for all $i\in [1,n]$. If $b\in H_2^\times$, then $(a,b)H=(aH_1)\times H_2=\bigcap_{i=1}^n Q_i\times H_2$. By Claim B, we obtain that $\sqrt{Q_i\times H_2}=P_i\times H_2\in \mathfrak X(H)$ for all $i\in [1,n]$. If $b\notin H_2^\times$, then there exist primary ideals $L_1,\ldots ,L_m\subset H_2$ such that $bH_2=L_1\cap\ldots\cap L_m$ and $\sqrt{L_i}=N_i\in \mathfrak X(H_2)$ for all $j\in [1,m]$. Then $(a,b)H=(\bigcap_{i=1}^n Q_i\times H_2)\cap (\bigcap_{j=1}^m H_1\times L_j)$ and Claim B implies that all radicals of those ideals are in $\mathfrak X(H)$.  It follows by \cite[Theorem 22.7]{HK98} that both cases imply that $H$ is weakly Krull.

Conversely, suppose $H$ is weakly Krull. By symmetry, we only need to show that $H_1$ is weakly Krull. Let $a\in H_1\setminus H_1^\times$. Then $(a,1)\in H\setminus H^\times$ and by \cite[Theorem 22.7]{HK98} there exist $n\in \N$ and primary $t$-ideals $I_1,\ldots ,I_n\subset H$ such that $(a,1)H=(aH_1)\times H_2=I_1\cap\ldots\cap I_n$ and $\sqrt{I_i}\in \mathfrak X(H)$ for all $i\in [1,n]$. Since $(aH_1)\times H_2\subset I_i$, it follows by  Claim A that $I_i=Q_i\times H_2$ for every $i\in [1,n]$.
It is easy to see that each $Q_i$ must be primary. Since $\sqrt{I_i}=\sqrt{Q_i}\times H_2\in \mathfrak X(H)$, Claim B implies that $\sqrt{Q_i}\in \mathfrak X(H_1)$ for all $i\in [1,n]$.
Note that $aH_1=Q_1\cap\ldots\cap Q_n$.  It follows by \cite[Theorem 22.7]{HK98}  that $H_1$ is weakly Krull.
\end{proof}

\begin{corollary}\label{corollary:1doesntmatter}
Let $G$ be a group and let $G_0\subset G$ be a subset. Then $\mathcal B(G_0)$ is weakly Krull if and only if $\mathcal B(G_0\setminus\{1\})$ is weakly Krull.
\end{corollary}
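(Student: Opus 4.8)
The plan is to reduce immediately to the case $1_G\in G_0$, since if $1_G\notin G_0$ then $G_0\setminus\{1\}=G_0$ and there is nothing to prove. Assuming $1_G\in G_0$, I would exhibit $\mathcal B(G_0)$ as a direct product in which the identity contributes a factorial (hence harmless) factor, and then quote Proposition \ref{product:weaklyKrull}. First I would record the trivial fact that the length-one sequence $1_G$ is a product-one sequence, so that $\mathcal B(\{1_G\})=\mathcal F(\{1_G\})$ is the free abelian monoid of rank one on the generator $1_G$.

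The key structural step is the observation that every $S\in\mathcal F(G_0)$ decomposes uniquely as $S=1_G^{[\mathsf v_{1_G}(S)]}\bdot S'$ with $S'\in\mathcal F(G_0\setminus\{1\})$, and that inserting or deleting copies of $1_G$ does not alter any product of the remaining terms, so that $\pi(S)=\pi(S')$. Hence $S\in\mathcal B(G_0)$ if and only if $S'\in\mathcal B(G_0\setminus\{1\})$. This identifies
\[
\mathcal B(G_0)=\mathcal B(G_0\setminus\{1\})\times \mathcal F(\{1_G\})
\]
as an internal direct product: the two submonoids have disjoint supports, meet only in the empty sequence, and together generate $\mathcal B(G_0)$. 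Since $\mathcal F(\{1_G\})$ is free abelian of rank one, it is factorial and in particular weakly Krull.

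Finally I would apply Proposition \ref{product:weaklyKrull}: the product $\mathcal B(G_0\setminus\{1\})\times\mathcal F(\{1_G\})$ is weakly Krull if and only if both factors are. As the factor $\mathcal F(\{1_G\})$ is always weakly Krull, this is equivalent to $\mathcal B(G_0\setminus\{1\})$ being weakly Krull, which is precisely the assertion. I do not expect a genuine obstacle here; the only point demanding any care is verifying that the product-one property of $S$ depends only on its part $S'$ supported away from $1_G$, i.e.\ that $\pi(S)=\pi(S')$, and this is exactly what collapses the whole statement onto the already-established behavior of weakly Krull monoids under direct products.
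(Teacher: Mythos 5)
Your proposal is correct and follows essentially the same route as the paper: both exhibit the isomorphism $\mathcal B(G_0)\cong \mathcal B(G_0\setminus\{1\})\times\mathcal F(\{1\})$ coming from the fact that appending or deleting copies of $1_G$ does not affect the product-one property, and then invoke Proposition \ref{product:weaklyKrull}. Your additional remark that $\mathcal F(\{1_G\})$ is factorial and hence weakly Krull is the (trivial) detail the paper leaves implicit, and it is exactly what is needed to conclude both directions of the equivalence.
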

\begin{proof}
If $1\notin G_0$ the statement is trivial, so let $1\in G_0$. Since $1$ (considered as a sequence) has the property that $S\in \mathcal B(G_0)$ if and only if $S\bdot 1\in \mathcal B(G_0)$ for all $S\in \mathcal F(G_0)$, it is clear that $\mathcal B(G_0\setminus\{1\})\times \mathcal F(\{1\})\cong \mathcal B(G_0)$ (the isomorphism being $(S,1^{[n]})\mapsto S\bdot 1^{[n]})$. Now the assertion follows by Proposition \ref{product:weaklyKrull}.
\end{proof}

\smallskip
\begin{proof}[Proof of Theorem \ref{5.1}.2]
By Corollary \ref{corollary:1doesntmatter} we can assume that $G_0$ does not contain the element $1$.
	If $G_0\subset\langle \alpha\rangle$ or $|G_0|=1$, then $\mathcal B(G_0)$ is Krull and hence weakly Krull. For every $g\in G_0$, we let $\mathfrak p_g=\{S\in \mathcal B(G_0)\colon g\in \supp(S)\}$.
	 We distinguish three cases depending on $G_0$ and in each case we prove the asserted equivalence. 
	
	\smallskip
	\noindent{\bf Case 1:} $G_0\subset \langle \alpha\rangle\tau$ with $|G_0|\ge 2$.
	\smallskip
	
	 If $|G_0|= 2$, say $G_0=\{\alpha^i\tau,\alpha^j\tau\}$, where $i,j\in \Z$ are distinct, then $\mathcal A(G_0)=\{(\alpha^i\tau)^{[2]},(\alpha^j\tau)^{[2]}\}$, which implies $\mathcal B(G_0)$ is factorial and hence weakly Krull.
	
	Suppose $|G_0|=3$. We will show $\mathcal B(G_0)$ is weakly Krull.
	Let $G_0=\{\alpha^i\tau,\alpha^j\tau,\alpha^k\tau\}$, where $i,j,k\in \Z$ are distinct. 
	
	\medskip
	\noindent{\bf Claim D: }{\it Let $d=\gcd(|k-i|,|k-j|,|j-i|)$. Then $$\mathcal A(\mathcal B(G_0))=\{(\alpha^i\tau)^{[2]}, (\alpha^j\tau)^{[2]},  (\alpha^k\tau)^{[2]}, A:=(\alpha^i\tau)^{[|k-j|/d]}\bdot (\alpha^j\tau)^{[|k-i|/d]}\bdot (\alpha^k\tau)^{[|j-i|/d]}\}\,.$$
	}
	\smallskip
	
	\noindent{\it Proof of  Claim D. } 
	Let $S=(\alpha^i\tau)^{[x]}\bdot (\alpha^j\tau)^{[y]}\bdot (\alpha^k\tau)^{[z]}\in \mathcal F(G_0)$, where $x,y,z\in \N_0$. We define the map $\phi\colon \mathcal F(G_0)\rightarrow \Z$ by $\phi(S)=ix+jy+kz$.  
	To show the assertion, we will use the fact that 
	$S\in \mathcal B(G_0)$ if and only if there exist subsequences  $T_1$ and $T_2$ such that $S=T_1\bdot T_2$, $|T_1|=|T_2|$, and $\phi(T_1)=\phi(T_2)$ (see  {\bf Claim A} of Proof of Theorem \ref{5.1}.1). In particular, if $S$ is an atom, then $\supp(T_1)\cap \supp(T_2)=\emptyset$, since otherwise $h^{[2]}\bdot (S\bdot h^{[-2]})$ would be a decomposition of $S$ for every $h\in\supp(T_1)\cap\supp(T_2)$ by \textbf{Claim A}; contradicting the fact that $S$ is an atom.

	Let $A_0\in \mathcal A(\mathcal B(G_0))$. If $|\supp(A_0)|=1$, then $A_0\in \{(\alpha^i\tau)^{[2]}, (\alpha^j\tau)^{[2]},  (\alpha^k\tau)^{[2]}\}$. 
	
	Suppose $|\supp(A_0)|=2$, say $\supp(A_0)=\{\alpha^i\tau, \alpha^j\tau\}$. Let $T_1$ and $T_2$ be the subsequences such that $A_0=T_1\bdot T_2$, $|T_1|=|T_2|$, $\phi(T_1)=\phi(T_2)$, $\alpha^i\tau\in \supp(T_1)$.
	Then $\supp(T_1)\cap \supp(T_2)=\emptyset$ and hence   $T_1=(\alpha^i\tau)^{[\mathsf v_{\alpha^i\tau}(A_0)]}$ and $T_2=(\alpha^j\tau)^{[\mathsf v_{\alpha^j\tau}(A_0)]}$. Therefore 
	 $\mathsf v_{\alpha^i\tau}(S)=\mathsf v_{\alpha^j\tau}(S)$ and  $\mathsf v_{\alpha^i\tau}(S)i=\mathsf v_{\alpha^j\tau}(S)j$, a contradiction.
	
	Suppose $|\supp(A_0)|=3$. After renumbering if necessary, we may suppose $i<j<k$ and $A_0=(\alpha^i\tau)^{[x]}\bdot (\alpha^j\tau)^{[y]}\bdot (\alpha^k\tau)^{[z]}$, where $x,y,z\in \N$. Let $T_1$ and $T_2$ be the subsequences such that $A_0=T_1\bdot T_2$, $|T_1|=|T_2|$,  $\phi(T_1)=\phi(T_2)$, and $\alpha^i\tau\in \supp(T_1)$. Then $\supp(T_1)\cap \supp(T_2)=\emptyset$. If $\supp(T_1)=\{\alpha^i\tau\}$, then $T_1=(\alpha^i\tau)^{[x]}$ and $T_2=(\alpha^j\tau)^{[y]}\bdot (\alpha^k\tau)^{[z]}$, whence
	$x=y+z$ and $ix=jy+kz>iy+iz$, a contradiction.
	 If $\supp(T_1)=\{\alpha^i\tau, \alpha^j\tau\}$, then $T_1=(\alpha^i\tau)^{[x]}\bdot (\alpha^j\tau)^{[y]}$ and $T_2=(\alpha^k\tau)^{[z]}$, whence
	 $x+y=z$ and $ix+jy=kz=kx+ky>ix+jy$, a contradiction.
	Therefore $\supp(T_1)=\{\alpha^i\tau, \alpha^k\tau\}$ and hence $T_1=(\alpha^i\tau)^{[x]}\bdot (\alpha^k\tau)^{[z]}$ and $T_2=(\alpha^j\tau)^{[y]}$.
	It follows that $ix+kz=jy$ and $x+z=y$.
	Since $\big((k-j)/d, (k-i)/d, (j-i)/d\big)$ is a solution of the above linear equations, we obtain that $(x,y,z)=r\big((k-j)/d, (k-i)/d, (j-i)/d\big)$, where $r$ is a positive rational. Since $\gcd\big((k-j)/d, (k-i)/d, (j-i)/d\big)=1$, we have that $r\in \N$ and hence $r=1$, since $A_0$ is an atom.
	\qed[Claim D.]
	\medskip
	
	Since $G_0$ consists of torsion elements, it follows  by Proposition \ref{3.8} that $\mathfrak X(\mathcal B(G_0))=\{\mathfrak p_g\colon g\in G_0\}$ is finite.
	Now let $S\in\bigcap_{g\in G_0}\mathcal B(G_0)_{\mathfrak p_g}$. Then $S\in \mathcal F(G_0)$ and $|S|$ is even. By definition of weakly Krull monoids, we need to show $S\in\mathcal B(G_0)$. If $|\supp(S)|=1$, then $S\in\mathcal B(G_0)$. If $|\supp(S)|=2$, say $\supp(S)=\{\alpha^i\tau, \alpha^j\tau\}$,  then there exist $n,m\in\N$ such that $S=(\alpha^i\tau)^{[n]}\bdot (\alpha^j\tau)^{[m]}$. It follows by the fact that $|S|$ is even that   $n$ and $m$ are either both even or both odd. If both are even,  then $S\in\mathcal B(G_0)$. Suppose  both are odd.  Since $S\in \mathcal B(G_0)_{\mathfrak p_{\alpha^k\tau}}$, we obtain there exists $S_1\in\mathcal B(G_0)\setminus \mathfrak p_{\alpha^k\tau}$ such that $S\bdot S_1\in\mathcal B(\{\alpha^i\tau, \alpha^j\tau\})$. It follows that $\mathsf v_{\alpha^i\tau}(S\bdot S_1)$ and $\mathsf v_{\alpha^j\tau}(S\bdot S_1)$ are both even, whence $\mathsf v_{\alpha^i\tau}(S_1)$ and $\mathsf v_{\alpha^j\tau}(S_1)$ are both odd, a contradiction to $S_1\in \mathcal B(G_0)\setminus \mathfrak p_{\alpha^k\tau}=\mathcal B(\{\alpha^i\tau, \alpha^j\tau\})$.
	
	So it remains to treat the case when $|\supp(S)|=3$. Assume $S=(\alpha^i\tau)^{[k_1]}\bdot (\alpha^j\tau)^{[k_2]}\bdot (\alpha^k\tau)^{[k_3]}$, where $k_1,k_2,k_3\in\N$ with $k_1+k_2+k_3$ even. If all three of them are even, then $S\in \mathcal B(G_0)$. Otherwise, after renumbering if necessary, we may assume $k_1, k_2$ are odd and $k_3$ is even. Since $S\in \mathcal B(G_0)_{\mathfrak p_{\alpha^k\tau}}$, we obtain some $S_1\in\mathcal B(G_0)\setminus \mathfrak p_{\alpha^k\tau}$ such that $S\bdot S_1\in\mathcal B(G_0)$. 
	It follows by $\supp(S_1)\subset\{\alpha^i\tau, \alpha^j\tau\}$ that $\mathsf v_{\alpha^i\tau}(S_1)$ is even and hence $\mathsf v_{\alpha^i\tau}(S\bdot S_1)$ is odd. By {\bf Claim D}, we may 
	suppose $$S\bdot S_1=A^{[w]}\bdot ((\alpha^i\tau)^{[2]})^{[x]}\bdot ((\alpha^j\tau)^{[2]})^{[y]}\bdot ((\alpha^k\tau)^{[2]})^{[z]}\ ,\quad  \text{ where }w,x,y,z\in \N_0\,.$$
	Then $w\cdot \mathsf v_{\alpha^i\tau}(A)$ and hence  $\mathsf v_{\alpha^i\tau}(A)$ are odd. A similar argument shows $\mathsf v_{\alpha^j\tau}(A)$ is odd and $|A|$ even implies that $\mathsf v_{\alpha^k\tau}(A)$ is even. If $A\mid_{\mathcal F(G_0)}S$, then $\mathsf v_{\alpha^l\tau}(S\bdot A^{[-1]})$ is even for all $l\in \{i,j,k\}$, which implies $S\bdot A^{[-1]}$ and hence $S$ are product-one sequences.
	If $A\nmid_{\mathcal F(G_0)}S$, then there is $l\in\{i,j,k\}$ such that $\mathsf v_{\alpha^l\tau}(S)<\mathsf v_{\alpha^l\tau}(A)$. Since $S\in \mathcal B(G_0)_{\mathfrak p_{\alpha^l\tau}}$,  it follows that there is $S_1\in\mathcal B(G_0\setminus\{\alpha^l\tau\})$ such that $S\bdot S_1\in \mathcal B(G_0)$. Since  $\mathsf v_{\alpha^l\tau}(S\bdot S_1)=\mathsf v_{\alpha^l\tau}(S)<\mathsf v_{\alpha^l\tau}(A)$,  there exist $x,y,z\in\N_0$ such that $S\bdot S_1=(\alpha^i\tau)^{[2x]}\bdot (\alpha^j\tau)^{[2y]}\bdot (\alpha^k\tau)^{[2z]}$, whence $k_1+\mathsf v_{\alpha^i\tau}(S_1)=2x$ is even. Since $k_1$ is odd, we have that $\mathsf v_{\alpha^i\tau}(S_1)$ is odd, a contradiction to the fact that $S_1\in\mathcal B(G_0\setminus\{\alpha^l\tau\})$.
	
Suppose  $|G_0|\geq 4$. Then  there exist distinct $i,j,k,r\in\Z\setminus\{0\}$ such that $\{\alpha^i\tau,\alpha^j\tau,\alpha^k\tau,\alpha^r\tau\}\subset G_0$. In this case, we will show $\mathcal B(G_0)$ is not weakly Krull.
Set $d_i=\gcd(j-k,j-r,k-r)=2^{\beta_i}d_i'$, $d_j=\gcd(i-k,i-r,k-r)=2^{\beta_j}d_j'$, $d_k=\gcd(j-i,j-r,i-r)=2^{\beta_k}d_k'$, and $d_r=\gcd(j-k,j-i,k-i)=2^{\beta_r}d_r'$ such that 
$d_i',d_j',d_k',d_r'$ are all odd.
By symmetry, we may assume that  $\beta_i=\max\{\beta_i,\beta_j,\beta_k,\beta_r\}$. 

Note that $A_i=(\alpha^j\tau)^{[|k-r|/{d_i}]}\bdot (\alpha^k\tau)^{[|j-r|/{d_i}]}\bdot(\alpha^r\tau)^{[|j-k|/{d_i}]}\in \mathcal A(G_0)$ and $|A_i|$ is even. It follows by 
$d_i=\gcd(j-k,j-r,k-r)$ that two of $(j-k)/d_i,(j-r)/d_i,(k-r)/d_i$ are odd.
By symmetry, we may suppose $(k-r)/d_i$ is even and  $(j-r)/d_i, (j-k)/d_i$ are  odd.
Consider $A_j=(\alpha^i\tau)^{[|k-r|/{d_j}]}\bdot (\alpha^k\tau)^{[|i-r|/{d_j}]}\bdot(\alpha^r\tau)^{[|i-k|/{d_j}]}\in \mathcal A(G_0)$. Since $\beta_i\ge \beta_j$, we obtain $(k-r)/d_j$ is even and hence  $(i-r)/d_j, (i-k)/d_j$ are both odd.

Let $S=(\alpha^k\tau)^{[|j-r|/{d_i}]}\bdot(\alpha^r\tau)^{[|j-k|/{d_i}]}$. Then $S\not\in \mathcal B(G_0)$ and
$$S=\frac{A_i}{(\alpha^j\tau)^{[|k-r|/{d_i}]}}=\frac{A_j\bdot (\alpha^k\tau)^{[|j-r|/{d_i}-|i-r|/d_j]}\bdot(\alpha^r\tau)^{[|j-k|/{d_i}-|i-k|/d_j]}  }{(\alpha^i\tau)^{[|k-r|/{d_j}]}}\in \bigcap_{\mathfrak p\in \mathfrak X(\mathcal B(G_0))}\mathcal B(G_0)_{\mathfrak p}\,.$$
By definition of weakly Krull monoids, we obtain $\mathcal B(G_0)$ is not weakly Krull.

\smallskip
\noindent{\bf Case 2:} $|G_0\cap \langle \alpha\rangle|\ge 1$ and $|G_0\cap \langle \alpha\rangle\tau|\ge 2$.
\smallskip

 Then  there are $i\in\Z\setminus\{0\}$ and distinct $j,k\in \Z$ such that $\{\alpha^i,\alpha^j\tau, \alpha^k\tau\}\subset G_0$.
	Therefore $\mathfrak X(\mathcal B(G_0))\subset\{\mathfrak p_{g}\colon g\in G_0\}$ and 
	$$(\alpha^{i})^{[2]}=\frac{(\alpha^{i})^{[2]}\bdot (\alpha^{j}\tau)^{[2]}}{(\alpha^{j}\tau)^{[2]}}=\frac{(\alpha^{i})^{[2]}\bdot (\alpha^{k}\tau)^{[2]}}{(\alpha^{k}\tau)^{[2]}}\in\bigcap_{\mathfrak p\in\mathfrak X(\mathcal B(G_0))}\mathcal B(G_0)_{\mathfrak p}\,.$$
	By definition of weakly Krull monoids, it follows that $\mathcal B(G_0)$ is not weakly Krull.

\smallskip
\noindent{\bf Case 3:} $|G_0\cap \langle \alpha\rangle|\ge 1$ and $|G_0\cap \langle \alpha\rangle\tau|= 1$.
\smallskip

	Suppose $G_0\cap\langle\alpha\rangle\tau=\{\alpha^k\tau\}$, where $k\in \Z$. If  $G_0\setminus\{\alpha^k\tau\}\subset \{\alpha^i\colon i\in \N\}$ or $G_0\setminus\{\alpha^k\tau\}\subset \{\alpha^{-i}\colon i\in \N\}$, then 
	$\mathfrak X(\mathcal B(G_0))=\{\mathfrak p_{g}\colon g\in G_0\setminus\{\mathfrak p_{\alpha^k\tau}\}\}$ and hence for $\alpha^x\in G_0$ we have that $$(\alpha^x)^{[2]}\in\bigcap_{\mathfrak p\in\mathfrak X(\mathcal B(G_0))}\mathcal B(G_0)_{\mathfrak p}\setminus \mathcal B(G_0)\,.$$
	By definition of weakly Krull monoids, it follows  that $\mathcal B(G_0)$ is not weakly Krull.
	
	Now we assume that $G_0\setminus\{\alpha^k\tau\}=\{\alpha^i\colon i\in I\} \cup \{\alpha^{-j}\colon j\in J\}$, where $I,J$ are nonempty sets of positive integers. Let $d=\gcd(I\cup J)$ and let  $G_1=\{\tau\}\cup \{\alpha^{i/d}\colon i\in I\} \cup \{\alpha^{-j/d}\colon j\in J\}$.
	By changing bases, we obtain that $\mathcal B(G_0)$ is weakly Krull if and only if $\mathcal B(G_1)$ is weakly Krull.
	
	 Suppose $|I\cup J|\ge 2$. Let $i_0\in I\cup J$ and $G_2=\{\tau, \alpha^{i_0/d}\}\cup  \{\alpha^{-j/d}\colon j\in (I\cup J)\setminus \{i_0\}\}$. By Lemma \ref{new3}.2, we obtain that $\mathcal B(G_1)$ is weakly Krull if and only if $\mathcal B(G_2)$ is weakly Krull. Now the assertion follows from Proposition \ref{new5}.
	 
	  Suppose $|I\cup J|=1$, say $I\cup J=\{i\}$. Then $G_1=\{\tau, \alpha, \alpha^{-1}\}$. To finish the proof, we only need to show that $\mathcal B(G_1)$ is weakly Krull. 
Let $S\in \bigcap_{\mathfrak p\in \mathfrak X(\mathcal B(G_1))}\mathcal B(G_1)_{\mathfrak p}$. Then Lemma \ref{new3}.1 implies that $S\in  \mathcal F(G_1)$. It suffices to show that $S$ is a product-one sequence.
Since there exists $S_{\tau}\in \mathcal B(G_1)$ with $\tau\not\in \supp(S_{\tau})$  such that $S\bdot S_{\tau}$ is a product-one sequence, we obtain that $\mathsf v_\tau(S)$ is even. If $\mathsf v_\tau(S)=0$, then $S$ is a product-one sequence. Now we suppose $\mathsf v_{\tau}(S)$ is a positive even integer. Since there exists $S_{\alpha}\in \mathcal B(G_1)$ with $\alpha\not\in \supp(S_{\alpha})$ such that $S\bdot S_{\alpha}$ is a product-one sequence, we obtain that $\mathsf v_{\alpha}(S\bdot S_{\alpha})+\mathsf v_{\alpha^{-1}}(S\bdot S_{\alpha})$ and $\mathsf v_{\alpha^{-1}}(S_{\alpha})$ are both even, whence $\mathsf v_{\alpha}(S)+\mathsf v_{\alpha^{-1}}(S)$ is even. It follows that $S$ is a product-one sequence.
\end{proof}

\smallskip
\begin{proof}[Proof of Theorem \ref{5.1}.3] $(c)\Leftrightarrow (e)$ follows by Theorem \ref{3.11}.2, $(a)\Leftrightarrow (b)$ follows by Equation \eqref{basic}, and $(c)\Rightarrow (b)$ follows by Proposition \ref{4.1}.2. It remains to show $(b)\Rightarrow (d)$ and $(d)\Rightarrow (e)$.

$(b)\Rightarrow (d)$ If $G_0\nsubseteq \langle \alpha\rangle$ and $G_0\nsubseteq \langle\alpha\rangle\tau\cup\{1\}$, then there are $i\in\Z\setminus\{0\}$ and $j\in \Z$ such that $\alpha^i, \alpha^j\tau\in G_0$. Then for all $n\in\N$ we have that $A_n=(\alpha^i)^{[2n]}\bdot (\alpha^j\tau)^{[2]}\in\mathcal A(G_0)$. Since $A_n\mid_{\mathcal B(G_0)} ((\alpha^i)^{[2]}\bdot (\alpha^j\tau)^{[2]})^n$ and $A_n\nmid_{\mathcal B(G_0)} ((\alpha^i)^{[2]}\bdot (\alpha^j\tau)^{[2]})^m$ for any $m\in [1,n-1]$, it follows that $\omega (\mathcal B(G_0), A_n)\geq n$ and hence $\omega(G_0)=\infty$.

$(d)\Rightarrow (e)$ If $G_0\subset \langle \alpha\rangle$, then  $\langle G_0\rangle$ is abelian and hence the finiteness of $G_0$ implies $\mathcal B(G_0)$ is  finitely generated.  If $G_0\subset\langle\alpha\rangle\tau\cup\{1\}$, then $G_0$ consists of torsion elements. It follows by Theorem \ref{3.11}.2 that $\mathcal B(G_0)$ is  finitely generated. 
\end{proof}

\smallskip
\begin{proof}[Proof of Theorem \ref{5.1}.4] $(d)\Rightarrow (b)$ is clear.
For $(b)\Rightarrow (c)$, recall that $\rho=\sup\{\rho_k/k:k\geq 2\}$.	
$(a)\Rightarrow (e)$ and $(c)\Rightarrow (e)$ If $G_0\cap \langle\alpha\rangle\tau\neq\emptyset$ and $\mathcal B(G_0\cap\langle\alpha\rangle)\nsubseteq\{1_{\mathcal B(G_0)}, 1_G^{[n]}:n\geq 0\}$, then there exist $i\in\N, j\in\Z_{<0}$, and $k\in\Z$ such that $\{\alpha^i,\alpha^j,\alpha^k\tau\}\subset G_0$. We will show that $\mathsf t(G_0,(\alpha^k\tau)^{[2]})=\infty$ and $\rho_2(G_0)=\infty$. Let $m=\lcm(i,-j)$ and $n_i=m/i$, $n_j=m/(-j)$. Then $(\alpha^i)^{[n_i]}\bdot (\alpha^j)^{[n_j]}$ is an atom.  For every $n\in\N$, let $S_n=(\alpha^k\tau)^{[4]}\bdot (\alpha^i)^{[2n_in]}\bdot (\alpha^j)^{[2n_jn]}$. Then
\begin{equation*}
\mathsf Z(S_n)\cap (\alpha^k\tau)^{[2]}\bdot \mathsf Z(\mathcal B(G_0))=\{z_n':=(\alpha^k\tau)^{[2]}\bdot (\alpha^k\tau)^{[2]}\bdot ((\alpha^i)^{[n_i]}\bdot (\alpha^j)^{[n_j]})^{[2n]}\}.
\end{equation*}
Set $z_n=((\alpha^k\tau)^{[2]}\bdot (\alpha^i)^{[2n_in]})\bdot ((\alpha^k\tau)^{[2]}\bdot (\alpha^j)^{[2n_jn]})\in \mathsf Z(S_n)$. Then $\mathsf t(G_0,(\alpha^k\tau)^{[2]})\geq \mathsf d(z_n,z_n')=2n+2$ and $\rho_2(G_0)\ge 2n+2$, whence $\mathsf t(G_0,(\alpha^k\tau)^{[2]})=\infty$ and $\rho_2(G_0)=\infty$.

%\medskip
%
%$(e)\Rightarrow (d)$ If  $G_0\subset\langle\alpha\rangle$, then by Theorem \ref{5.1}.3 we have that $\mathsf D(G_0)<\infty$. Now Proposition \ref{4.1}.2 implies $\rho(G_0)$ is accepted. Suppose that $G_0\cap\langle\alpha\rangle\tau\neq\emptyset$. As we have already seen in the proof of $(e)\Rightarrow (a)$, there is $M\in\N$ such that for all $A\in\mathcal A(G_0)$ we have $\sum\limits_{g\in G_0\cap\langle\alpha\rangle\tau}\mathsf v_g(A)\leq M$ and this bound is accepted. To see that $\rho(G_0)<\infty$, note that
%\begin{equation*}
%\rho(G_0)=\sup\{\frac{\max(\mathsf L(S))}{\min(\mathsf L(S))}\colon S\in\mathcal B(G_0)\setminus \{1_{\mathcal B(G_0)}\}\}\leq \sup\{\frac{M/2\cdot\min(\mathsf L(S))}{\min(\mathsf L(S))}\colon S\in\mathcal B(G_0)\setminus \{1_{\mathcal B(G_0)}\}\}=\frac{M}{2}.
%\end{equation*}
%Now let $A\in\mathcal A(G_0)$ such that $\sum\limits_{g\in G_0\cap\langle\alpha\rangle\tau}\mathsf v_g(A)= M$. Then $A=h_1\bdot\ldots\bdot h_r\bdot g_1\bdot\ldots\bdot g_M$, where $h_i\in G_0\cap \langle \alpha\rangle$ and $g_i\in G_0\cap \langle\alpha\rangle\tau$. It follows that $A^{[2]}=(h_1^{[2]}\bdot\ldots\bdot h_r^{[2]}\bdot g_1^{[2]})\bdot g_2^{[2]}\bdot\ldots\bdot g_M^{[2]}$, hence $\rho(\mathsf L(A^{[2]}))=\frac{M}{2}$.

\medskip

$(e)\Rightarrow (a)$ and $(e)\Rightarrow (d)$ If  $G_0\subset\langle\alpha\rangle$, then it follows by Theorem \ref{5.1}.3 that  $\mathsf D(G_0)<\infty$ and $\mathcal B(G_0)$ is tame. Now Proposition \ref{4.1}.2 implies $\rho(G_0)$ is accepted.

Suppose $\mathcal B(G_0\cap \langle \alpha\rangle)\subset \{1_{\mathcal B(G_0)}, 1_G^{[n]}:n\geq 0\}$ and $G_0\cap\langle\alpha\rangle\tau\neq\emptyset$. First, we prove that for all $A\in\mathcal A(G_0)\setminus \{1_G\}$ we have that
\begin{equation*}
2\le \sum\limits_{g\in G_0\cap\langle\alpha\rangle\tau}\mathsf v_g(A)\leq \max\Big\{\sum\limits_{g\in G_0\cap\langle\alpha\rangle\tau}\mathsf v_g(S)\colon S\in \mathcal A(D)\Big\}=:M,
\end{equation*}
where $D$ is the finitely generated monoid defined in Equation \eqref{equation:D}. The first inequality $2\le \sum\limits_{g\in G_0\cap\langle\alpha\rangle\tau}\mathsf v_g(A)$ follows from
 the fact that $\mathcal B(G_0\cap \langle\alpha\rangle)\subset\{1_{\mathcal B(G_0)}, 1_G^{[n]}:n\geq 0\}$. Let $W\in \mathcal A(G_0)\setminus \mathcal A(D)$. Then $1_G\not\in \supp(W)$. 
  We can write $W=U_1\bdot\ldots\bdot U_r\bdot g_1^{[2]}\bdot\ldots\bdot g_t^{[2]}$, where $r,t\in \N_0$,  $g_1,\ldots, g_t \in G_0\cap \langle\alpha\rangle$, and $U_1,\ldots, U_r\in \mathcal A(D)\cap \mathcal A(G_0)$.
  Thus $\supp(U_i)\cap \langle\alpha\rangle\tau\neq\emptyset$ for every $i\in [1,r]$.
  If $r=0$, then $\supp(B)\in G_0\cap \langle\alpha\rangle$, a contradiction. Suppose $r\ge 2$. Since $U_1'\colon =U_1\bdot g_1^{[2]}\bdot\ldots\bdot g_t^{[2]}\in \mathcal A(G_0)$, we obtain that $W=U_1'\bdot U_2\bdot\ldots\bdot U_r$, a contradiction. Thus $r=1$
  and hence $\sum\limits_{g\in G_0\cap\langle\alpha\rangle\tau}\mathsf v_g(W)= \sum\limits_{g\in G_0\cap\langle\alpha\rangle\tau}\mathsf v_g(U_1)\le M$. The second inequality follows.
%  . If $t=0$, then $W\in \mathcal A(D)$, a contradiction. Therefore
%     $W=T^{[2]}\bdot W'$, where $T\in \mathcal F(G_0\cap \langle \alpha\rangle)$ is nontrivial  and $W'\in \mathcal A(D)$ with $\supp(W')\cap \langle \alpha\rangle\tau\neq \emptyset$, whence $\sum\limits_{g\in G_0\cap\langle\alpha\rangle\tau}\mathsf v_g(W)= \sum\limits_{g\in G_0\cap\langle\alpha\rangle\tau}\mathsf v_g(W')\le M$. The second inequality follows.

  We will show  $\mathsf t(G_0,A)<\infty$ for every $A\in\mathcal A(G_0)$ and $\rho(G_0)=M/2$ is accepted.

  If $A=1_G$ this is clear by \cite[Lemma 1.6.5.2]{Ge-HK06a}, so let $A\in\mathcal A(G_0)\setminus\{1_G\}$. By Equation \eqref{basic1}, 
   it suffices to prove $\omega(G_0, A)$ and $\tau(G_0, A)$ are both finite. Let $n\in\N$ and $U_1,\hdots ,U_n\in \mathcal B(G_0)\setminus\{1_G\}$ such that $A\mid_{\mathcal B(G_0)}U_1\bdot\hdots \bdot U_n$. Let $A=A_1\bdot \hdots \bdot A_m$ be a factorization of $A$ in $D$. Note that $\omega(D)$ is finite by \cite[Theorem 3.1.4]{Ge-HK06a}. Since $\omega(D, A)\le m\omega(D)$, after renumbering if necessary, we may assume $A\mid_D U_1\bdot\ldots\bdot U_{m\omega(D)}$. Set $A'= U_1\bdot\ldots\bdot U_{m\omega(D)}\bdot A^{{-1}}$. Since $\sum\limits_{g\in G_0\cap\langle\alpha\rangle\tau}\mathsf v_g(U_{m\omega(D)+1})\ge 2$, we obtain $A'\bdot  U_{m\omega(D)+1}\in \mathcal B(G_0)$ and hence $A\mid_{\mathcal B(G_0)} U_1\bdot\ldots\bdot U_{m\omega(D)+1} $. Therefore $\omega(G_0,A)\le m\omega(D)+1$ is finite.
Note that for every $U\in\mathcal A(G_0)\setminus\{1_G\}$ we have that $\sum\limits_{g\in G_0\cap\langle \alpha\rangle\tau}\mathsf v_g(U)\geq 2$. Suppose $A\mid_{\mathcal B(G_0)}U_1\bdot \hdots \bdot U_n$, but  $A$ does not divide any proper subproduct.  Then $n\leq \omega(G_0,A)$ and $U_1\bdot \ldots\bdot U_{n}\bdot A^{[-1]}=W_1\bdot \ldots \bdot W_m$, where $m\in \N$ and $W_1,\ldots, W_m\in \mathcal A(G_0)$. Since $2\le \sum_{g\in G_0\cap \langle \alpha\rangle\tau}\mathsf v_g(W_i)$ for every $i\in [1,m]$ and $M\ge \sum_{g\in G_0\cap \langle \alpha\rangle\tau}\mathsf v_g(U_j)$ for every $j\in [1,n]$, we obtain that $m\le \frac{nM}{2}\le \frac{M\cdot \omega(G_0,A)}{2}$. The definition of 
$\tau(G_0,A)$ implies that $\tau(G_0, A)\le \frac{M\cdot \omega(G_0,A)}{2}$ is finite.

%$(e)\Rightarrow (d)$ If $G_0\langle\alpha\rangle\tau=\emptyset$, then $G_0\subset\langle\alpha\rangle$, hence by Theorem \ref{5.1}.3 we have that $\mathsf D(G_0)<\infty$. Now Proposition \ref{1} gives, that $\rho(G_0)$ is accepted. Thus, we can suppose that $G_0\cap\langle\alpha\rangle\tau\neq\emptyset$. As we have already seen in the proof of $(e)\Rightarrow (a)$, there is $M\in\N$ such that for all $A\in\mathcal A(G_0)$ we have $\sum\limits_{g\in G_0\cap\langle\alpha\rangle\tau}\mathsf v_g(A)\leq M$ and this bound is accepted. 
Note that
\begin{align*}
\rho(G_0)&=\sup\Big\{\frac{\max(\mathsf L(S))}{\min(\mathsf L(S))}\colon S\in\mathcal B(G_0\setminus\{1_G\})\setminus \{1_{\mathcal B(G_0)}\}\Big\}\\
&\leq \sup\Big\{\frac{M/2\cdot\min(\mathsf L(S))}{\min(\mathsf L(S))}\colon S\in\mathcal B(G_0\setminus\{1_G\})\setminus \{1_{\mathcal B(G_0)}\}\Big\}\\
&=\frac{M}{2}.
\end{align*}
Let $A\in\mathcal A(G_0)$ such that $\sum\limits_{g\in G_0\cap\langle\alpha\rangle\tau}\mathsf v_g(A)= M$. Then $A=h_1\bdot\ldots\bdot h_r\bdot g_1\bdot\ldots\bdot g_M$, where $h_i\in G_0\cap \langle \alpha\rangle$ and $g_i\in G_0\cap \langle\alpha\rangle\tau$. It follows that $A^{[2]}=(h_1^{[2]}\bdot\ldots\bdot h_r^{[2]}\bdot g_1^{[2]})\bdot g_2^{[2]}\bdot\ldots\bdot g_M^{[2]}$, whence $\rho(\mathsf L(A^{[2]}))\geq\frac{M}{2}$, thus, using the above inequality, $\rho(\mathsf L(A^{[2]}))=\frac{M}{2}$ and $\rho(G_0)$ is accepted.
%
%$(d)\Rightarrow (b)$ This is clear.
%
%$(b)\Rightarrow (c)$ This is clear.
%
%$(c)\Rightarrow (e)$ If $G_0\cap \langle \alpha\rangle\tau\neq\emptyset$ and $\mathcal B(G_0\cap \langle\alpha\rangle)\neq \{1_{\mathcal B(G_0)}\}$, then there are $i\neq j\in\N$ and $k\in\N_0$ such that $\alpha^i,\alpha^{-j},\alpha^k\tau\in G_0$. Now let $n,m\in\N$ be such that $(\alpha^i)^{[n]}\bdot (\alpha{-j})^{[m]}\in\mathcal A(G_0)$. Then for every $s\in\N$ we have that $$((\alpha^i)^{[2sn]}\bdot (\alpha^k\tau)^{[2]})\bdot ((\alpha^{-j})^{[2sm]}\bdot (\alpha^k\tau)^{[2]})=((\alpha^i)^{[n]}\bdot (\alpha^{-j})^{[m]})^{[2s]}\bdot (\alpha^k\tau)^{[2]}\bdot (\alpha^k\tau)^{[2]},$$
%so $\rho_2(G_0)=\infty$.
\end{proof}

\smallskip
\begin{proof}[Proof of Theorem \ref{5.1}.5]
 Suppose  $\mathsf c (G_0) < \infty$. Then the set of distances is finite by Equation \eqref{basic}. If $\rho_k (G_0) = \infty$ for some $k \in \N$, then the sets $\mathcal U_k (G_0)$ have the asserted form by \cite[Theorem 4.2]{Ga-Ge09b}. If $\rho_k (G_0) < \infty$ for all $k \ge 2$, then $\rho (G_0)$ is accepted by Theorem \ref{5.1}.4, whence the claim for $\mathcal U_k (G_0)$ holds by \cite[Lemma 3.3 and Theorem 4.2]{Ga-Ge09b} (see also \cite[Theorem 1.2]{Tr19a}).

It remains to show the finiteness of the catenary degree. Let $D$ be the finitely generated monoid defined in Equation \eqref{equation:D}. If $G_0\subset \langle \alpha\rangle$, then by Theorem \ref{5.1}.3 and Equation \eqref{basic} we obtain $\mathsf c(G_0)$ is finite. Suppose $G_0\cap \langle\alpha\rangle\tau\neq\emptyset$ and set $$N=\max\{|A|\colon A\in \mathcal A(D)\}.$$ Let $G_1=\langle \alpha\rangle\cap G_0$. 
	We first show the following two claims.
	
	\medskip
	\noindent{\bf Claim A. }{\it  Let $S_1,\ldots, S_{|G_1|+1}\in \mathcal B(G_1)$.
		Then there exists a nonempty subset $I\subset [1,|G_1|+1]$ such that $\mathsf v_g(\prod_{i\in I}S_i)$ is even for all $g\in G_1$.}
	
	\begin{proof}[Proof of Claim A]
		Suppose $(e_1,\ldots,e_{|G_1|})$ is a basis of the elementary $2$-group $C_2^{|G_1|}$ and let $\eta\colon \mathcal F(G_1)\rightarrow C_{2}^{|G_1|}$ be a monoid homomorphism defined by $\eta(g_i)=e_i$, where $\{g_1,\ldots, g_{|G_1|}\}=G_1$.  Therefore for every sequence $S$ over $G_1$, we know $\mathsf v_g(S)$ is even for all $g\in G_1$ if and only if $\sigma(\eta(S))=0$.  
		
		 Since $\sigma(\eta(S_1))\bdot\ldots\bdot \sigma(\eta(S_{|G_1|+1}))$ is a sequence over $C_2^{|G_1|}$ of length $|G_1|+1=\mathsf D(C_2^{|G_1|})$, there exists a nonempty subset $I\subset [1, |G_1|+1]$ such that the sequence $\prod_{i\in I}\sigma(\eta(S_i))$  is a product-one sequence, whence $\mathsf v_g(\prod_{i\in I}S_i)$ is even for all $g\in G_1$.	
		\qedhere[End of proof of Claim A.]
	\end{proof}
	
	\medskip
	\noindent{\bf Claim B. }{\it  Let $A\in \mathcal A(G_0)$ with $\supp(A)\cap \langle\alpha\rangle\tau\neq\emptyset$ and let $T$ be a sequence over $G_1$.
		\begin{itemize}
			\item[(i)] For every $T_0\in \mathcal F(G_0\cap \langle\alpha\rangle)$, we have that $T_0^{[2]}\bdot A\in \mathcal B(G_0)$ and $\max\mathsf L(T_0^{[2]}\bdot A)\le 2|T_0|+N+|G_1|$.

			\item[(ii)] Suppose $A\in \mathcal A(D)$. Let $T_0\in \mathcal F(G_1)$   be a subsequence of $T$ such that
			$(T\bdot T_0^{[-1]})^{[2]}\in \mathcal B(G_0)$ with minimal length. Then $\max\mathsf L(T_0^{[2]}\bdot A)\le |G_1|+N$.
		\end{itemize}
	}
	
	\begin{proof}[Proof of Claim B]
		(i)	Let $T_0\in \mathcal F(G_0\cap \langle\alpha\rangle)$.  Since $A\in \mathcal A(G_0)\subset\mathcal B(G_0)$, there is an even number of terms from $G_0\cap\langle\alpha\rangle\tau$ contained in $A$, so distributing $T_0^{[2]}$ appropriately gives $T_0^{[2]}\bdot A\in\mathcal B(G_0)$. The definition of $D$ implies that there exist a sequence $T_1\in \mathcal F(G_0\cap \langle\alpha\rangle)$  and an atom $A_1\in \mathcal A(D)$ such that $A=T_1^{[2]}\bdot A_1$.
		Suppose $$T_0^{[2]}\bdot A=T_0^{[2]}\bdot T_1^{[2]}\bdot  A_1=W_1\bdot \ldots\bdot W_{r}\,,$$
		where $W_i\in \mathcal A(G_0)$ for all $i\in [1,r]$. Let $I\subset [1,r]$ be the maximal subset such that $\prod_{i\in I}W_i$ is a subsequence of $T_1^{[2]}$. Then for every $j\in [1,r]\setminus I$, we have that $W_j\bdot \prod_{i\in I}W_i\nmid_{\mathcal F(G_0)} T_1^{[2]}$, whence $r-|I|\le 2|T_0|+|A_1|\le 2|T_0|+N$.
		Assume to the contrary that $|I|\ge |G_1|+1$. Then {\bf Claim  A} implies that there exists  a nonempty subset $J\subset I$ such that $\prod_{i\in J}W_i=T_3^{[2]}$ for some $T_3\in\mathcal F(G_1)$. Note that $(T_1\bdot T_3^{[-1]})^{[2]}\bdot A_1$ is still  a product-one sequence, using the same argument as above for $T_0^{[2]}\bdot A\in\mathcal B(G_0)$, since we just cancel out terms of $T_1$. This is a contradiction to the fact that $A\in \mathcal A(G_0)$. Therefore $r\le 2|T_0|+N+|I|\le 2|T_0|+N+|G_1|$.

		(ii) Suppose $A\in \mathcal A(D)$. Let $T_0\in \mathcal F(G_1)$  be a subsequence of $T$ such that
		$(T\bdot T_0^{[-1]})^{[2]}\in \mathcal B(G_0)$ with minimal length.
		Suppose $$T_0^{[2]}\bdot A=W_1\bdot \ldots\bdot W_{r}\,,$$
		where $W_i\in \mathcal A(G_0)$ for all $i\in [1,r]$. After renumbering if necessary, we may assume there exists $t\in \N_0$ such  that \begin{equation*}
		W_1\bdot\ldots\bdot W_t\mid_{\mathcal F(G_0)} T_0^{[2]} \text{ and } W_j\nmid_{\mathcal F(G_0)}T_0^{[2]}\bdot (W_1\bdot\ldots\bdot W_t)^{[-1]} \text{ for all }j\in [t+1,r]\,,
		\end{equation*}
		whence $r-t\le |A|\le N$. If $t\ge |G_1|+1$, then {\bf Claim A} implies that there is a subset $I\subset [1,t]$ such that $\prod_{i\in I}W_i=T_1^{[2]}$ for some sequence $T_1\in \mathcal F(G_1)$. It follows that $T_0\cdot T_1^{[-1]}$ is a subsequence of $T$ such that $(T\bdot (T_0\bdot T_1^{[-1]})^{[-1]})^{[2]}=T_1^{[2]}\bdot (T\bdot T_0^{[-1]})^{[2]}=\prod_{i\in I}W_i\bdot (T\bdot T_0^{[-1]})^{[2]}\in \mathcal B(G_0)$, a contradiction to the minimality of $|T_0|$. Thus $t\le |G_1|$ and $r\le |G_1|+N$.
		\qedhere[End of proof of Claim B.]
	\end{proof}

	To prove the finiteness of $\mathsf c(G_0)$, we show that $$\mathsf c(G_0)\le \gamma= 4\mathsf t(D)+2+2|G_1|+2N\,.$$
	
	Assume to the contrary that there exists $B\in \mathcal B(G_0)$ with $\mathsf c_{\mathcal B(G_0)}(B)>\gamma$. Let $B\in \mathcal B(G_0)$ be the counter example such that $|B|$ is minimal and let $z_1=U_1\bdot\ldots\bdot U_k$, $z_2=V_1\bdot\ldots\bdot V_{\ell}$ be two factorizations of $B$ between which there is no $\gamma$-chain. Since $\mathcal A(G_1)\subset \mathcal A(D)$, we obtain that $\mathsf c(G_1)\leq\mathsf D(G_1)\leq N$, whence $\supp(B)\cap \langle\alpha\rangle\tau\neq\emptyset$. By symmetry, we may suppose $\supp(V_2\bdot\ldots\bdot V_{\ell})\cap \langle\alpha\rangle\tau\neq\emptyset$. Suppose $V_1=V_1'\bdot h_1^{[2]}\bdot\ldots\bdot h_r^{[2]}$, where $r\in \N_0$, $V_1'\in \mathcal A(G_0)\cap \mathcal A(D)$, and $h_1,\ldots, h_r\in G_1$.  Therefore for every subset $I\subset [1,r]$, we have that $V_1'\bdot \prod_{i\in I}h_i^{[2]}\in \mathcal A(G_0)$ and $V_1'\bdot \prod_{i\in I}h_i^{[2]}\Big|_{\mathcal B(G_0)} B$. To see this, note that $\supp(V_1')\subset \langle\alpha\rangle$ implies that $r=0$ and the statement is trivial. In the case $\supp(V_1')\cap \langle\alpha\rangle\tau\neq \emptyset$, we use the same argument as above to see that $V_1'\bdot \prod_{i\in I}h_i^{[2]}\in\mathcal B(G_0)$, namely distribute the $h_i$ appropriately. If $V_1'\bdot \prod_{i\in I}h_i^{[2]}\notin\mathcal A(G_0)$ we could distribute the remaining $h_j$ and would get a contradiction to $V_1\in\mathcal A(G_0)$. Also the second statement can be seen using the argument above to show that $(V_2\bdot\ldots\bdot V_{\ell})\bdot\prod_{j\in[1,r]\setminus I}h_j^{[2]}\in \mathcal B(G_0)$. We proceed by the following two claims.
	
	\medskip
	\noindent{\bf Claim C. }{\it There is a factorization $z_3=V_1'\bdot W_1\bdot \ldots \bdot W_t$ such that there is a $\gamma$-chain between $z_1$ and $z_3$.}
	
	\noindent{\bf Claim D. }{\it Let $I\subsetneq [1,r]$, $V_1^*=V_1'\bdot \prod_{i\in I}h_i^{[2]}$, and $j\in [1,r]\setminus I$. If $z'=V_1^*\bdot Y_1\bdot \ldots\bdot Y_x$ is a factorization of $B$, then there exists a factorization $z''=(V_1^*\bdot h_j^{[2]})\bdot Y_1'\bdot \ldots\bdot Y_{x'}'$ of $B$ such that there is a $\gamma$-chain between $z'$ and $z''$.}
	
	\medskip
	If {\bf Claim C} and {\bf Claim D} hold, then there exists a factorization $z_4=V_1\bdot X_1\bdot\ldots\bdot X_{t}$ of $B$ such that there is a $\gamma$-chain between $z_1$ and $z_4$. Since $|V_2\bdot\ldots \bdot V_{\ell}|=|X_1\bdot \ldots\bdot X_t|<|B|$, by the minimality of $|B|$ there is a $\gamma$-chain between $z_4$ and $z_2$, whence we are done.

	\bigskip
	
	\begin{proof}[Proof of Claims C and D]
		Let   $G_2=G_0\cap \langle \alpha\rangle\tau$.
		Let $U^*$ be an atom of $D$. If $U^*$ is also an atom of $\mathcal B(G_0)$, then let $U_0=1_{\mathcal B(G_0)}$ and otherwise let $U_0$ be an atom of $\mathcal B(G_0)$ such that $U^*\bdot U_0$ is also an atom of $\mathcal B(G_0)$.
		
		Suppose $B$ has two factorizations $z_7=U_0\bdot U_1\bdot\ldots \bdot U_{l}$ and $z_8=(U_0\bdot U^*)\bdot V_1\bdot\ldots\bdot V_{m}$ and suppose $\supp(U_1\bdot\ldots U_{l}\bdot (U^*)^{[-1]})\cap G_2\neq \emptyset$. It suffices to  show that there is a $\gamma$-chain between them. Then both statements follow by choosing the parameters for Claim C as $U^*=V_1'$, $U_0$ trivial, $U_1\bdot\ldots\bdot U_l=z_1$ and for Claim D as $U^*=h_j^{[2]}$ and $U_0=V_1'$.

		Note that $U^*$ divides $U_1\bdot\ldots\bdot U_{l}$ in $D$.  There is a subset $I\subset [1,l]$, say $I=[1, w]$, such that $|I|=w\le \omega(D)+1$, $U^*\t_D \prod_{i=1}^wU_i$, and $\supp((U^*)^{[-1]}\bdot\prod_{i=1}^wU_i)\cap G_2\neq\emptyset$. Therefore $(U^*)^{[-1]}\bdot\prod_{i=1}^wU_i\in \mathcal B(G_0)$ and $U^*\t_{\mathcal B(G_0)} \prod_{i=1}^wU_i$. For each $i\in [1,w]$, suppose $U_i=U_i'\bdot g_{i,1}^{[2]}\bdot \ldots\bdot g_{i,t_i}^{[2]}$, where $U_i'\in \mathcal A(D)$, $t_i\in \N_0$, and $g_{i,1},\ldots, g_{i,t_i}\in G_1$. Let $J\subset [1,w]$ and $I_i\subset [1,t_i]$ for each $i\in [1,w]$ be subsets such that
		$$U^*\t_D X=\prod_{j\in J}U_j'\bdot \prod_{i=1}^w\big(\prod_{j_i\in I_i}g_{i,j_i}^{[2]}\big)\,,$$
		but $U^*$ does not divide any proper subproduct. Thus $(U^*)^{[-1]}\bdot X$ has a factorization in $D$ of length $\le \tau(D)$. Suppose $$X\bdot \prod_{j\in [1,w]\setminus J}U_j'=U^*\bdot X_1\bdot \ldots\bdot X_{\tau}\,,$$ where $X_i\in \mathcal A(D)$ and $\tau\le \tau(D)+w-|J|\le 2\mathsf t(D)+1$.
		
		Let $T= \prod_{i=1}^w(\prod_{j_i\in [1,t_i]\setminus I_i }g_{i,j_i})$. Then  $$B=U_0\bdot T^{[2]}\bdot X\bdot \prod_{j\in [1,w]\setminus J}U_j'\bdot \prod_{j=w+1}^lU_j\,.$$ Let  $T_0=\prod_{i=1}^w(\prod_{j_i\in I_i'}g_{i,j_i})$ be a subsequence of $T$ such that
		$(T\bdot T_0^{[-1]})^{[2]}\in \mathcal B(G_0)$ with minimal length, where $I_i'\subset [1,t_i]\setminus I_i$. Note that $T_0$ can be trivial or equal to $T$. Suppose $$(T\bdot T_0^{[-1]})^{[2]}=W_1\bdot \ldots \bdot W_y\,,$$ where $W_i\in \mathcal A(G_1)$. Let $U_i''=U_i'\bdot \prod_{j\in I_i\cup I_i'}g_{i,j}^{[2]}$. Then $U_i''\in \mathcal A(G_0)$ (using the same argument as above Claim C) and  $B$ has a factorization
		$$z_{10}=U_0\bdot U_1''\bdot \ldots\bdot U_w''\bdot W_1\bdot\ldots\bdot W_{y}\bdot U_{w+1}\bdot\ldots\bdot U_l\ (\text{ note that }U_0\in \mathcal A(G_0)\cup\{1_{\mathcal B(G_0)}\}) \,.$$
		
		After renumbering if necessary, we may assume there exists $\tau_0\in \N$ such that $X_i\in \mathcal A(G_0)$ for all $i\in [1,\tau_0]$, $X_i\not\in \mathcal A(G_0)$ for all $i\in [\tau_0+1, \tau]$, and $\supp(X_1)\cap G_2\neq\emptyset$ (note that $\supp((U^*)^{[-1]}\bdot\prod_{i=1}^wU_i)\cap G_2\neq\emptyset$).
		Then $|X_i|=2$ for every $i\in [\tau_0+1, \tau]$, $X_1\bdot T_0^{[2]}\in \mathcal B(G_0)$, and by {\bf Claim B}, $X_1\bdot T_0^{[2]}$ has a factorization $X_1\bdot T_0^{[2]}=Y_1\bdot\ldots\bdot Y_s$  of length $s\le |G_1|+N$. We may suppose $\supp(Y_1)\cap G_2\neq \emptyset$. Then by {\bf Claim B} again, $X'=Y_1\bdot X_{\tau_0+1}\bdot\ldots\bdot X_{\tau}\in \mathcal B(G_0)$ has a factorization of length $\le |X_{\tau_0+1}\bdot\ldots\bdot X_{\tau}|+N+|G_1|= 2(\tau-\tau_0)+N+|G_1|$. It follows by $$B\bdot (T\bdot T_0^{[-1]})^{[-2]}\bdot (U_{w+1}\bdot\ldots\bdot U_l)^{[-1]}\bdot (U^*\bdot U_0)^{[-1]}= T_0^2\bdot X_1\bdot\ldots\bdot X_{\tau}= X_2\bdot\ldots\bdot X_{\tau_0}\bdot Y_2\bdot\ldots\bdot Y_s\bdot X'$$ that $B\bdot (T\bdot T_0^{[-1]})^{[-2]}\bdot (U_{w+1}\bdot\ldots\bdot U_l)^{[-1]}\bdot (U^*\bdot U_0)^{[-1]}$ has a factorization of length $\le (\tau_0-1)+|G_1|+N+2(\tau-\tau_0)+N+|G_1|\le \gamma$.
		Therefore $B$ has a factorization $$z_3=(U^*\bdot U_0)\bdot W_1'\bdot\ldots\bdot W_x'\bdot W_1\bdot\ldots\bdot W_y\bdot U_{w+1}\bdot\ldots\bdot U_l$$ with $x\le \gamma-1$, whence $\mathsf d(z_{3},z_{10})\le \max\{\gamma,\omega(D)+1\}$, where $W_1',\ldots, W_x'\in \mathcal A(G_0)$. Note that by choice and Equation \ref{4.1} we get that $w\leq \omega(D)+1\leq \mathsf t(D)+1$.

%		By {\bf Claim A}, there exist a nonempty subset $J_0\subset [1,y]$ with $|J_0|\le |G_1|+1$  and a sequence $T_3$ over $G_1$ such that $\prod_{j\in J_0}W_j=T_3^{[2]}$. Furthermore, there exist $J_i\subset [1,t_i]\setminus (I_i\cup I_i')$  for all $j\in [1,w]$ such that $T_3=\prod_{i=1}^w(\prod_{j\in J_i}g_{i,j})$. Set $U_i'''=U_i''\bdot\prod_{j\in J_i}g_{i,j}^{[2]}$ and hence $U_i'''\in \mathcal A(G_0)$. It follows that $$z_6=U_0\bdot U_1'''\bdot\ldots U_w'''\bdot \prod_{j\in [1,y\setminus J_0]}W_j\bdot \prod_{i=w+1}^lU_i$$ is a factorization of $B$ with $\mathsf d(z_6,z_5)\le w+|G_1|+1$. Inductively, we can show that there is a $(w+ |G_1|+1)$-chain between $z_7$ and $z_5$. 
%		
		By {\bf Claim A}, there exist $a\in \N$ and sequences $T_1',\ldots, T_{a}'$ over $G_1$  such that $[1,y]=K_1\uplus \ldots \uplus K_a$, where $K_1,\ldots,K_a$ are disjoint subsets of $[1,y]$ with $|K_i|\le |G_1|+1$ for every $i\in [1,a]$, and 
		 $(T_{i}')^{[2]}=\prod_{j\in K_i}W_j$.
		  Furthermore, there exist $J_i^{(j)}\subset [1,t_i]\setminus (I_i\cup I_i')$  for all $i\in [1,w]$ and all $j\in [1,a]$ such that $T_k'=\prod_{i=1}^w(\prod_{j\in J_i^{(k)}}g_{i,j})$ for all $k\in [1,a]$. Set $$U_i^{(k)}=U_i''\bdot\prod_{j\in J_i^{(1)}\cup\ldots \cup J_i^{(k)}}g_{i,j}^{[2]}$$ and hence $U_i^{(k)}\in \mathcal A(G_0)$ for all $k\in [1,a]$ and $i\in [1,w]$. Then $$z_{10+k}=U_0\bdot U_1^{(k)}\bdot\ldots U_w^{(k)}\bdot \prod_{j\in [1,y]\setminus (K_1\cup\ldots \cup K_k)}W_j\bdot \prod_{i=w+1}^lU_i$$ is a factorization of $B$, where $k\in [1,a]$. Note that $z_{10+a}=z_7$ and 
		  $\mathsf d(z_{10+i},z_{10+i+1})\le w+|G_1|+1$ for every $i\in [0, a-1]$. Then there is a $(w+ |G_1|+1)$-chain between $z_7$ and $z_{10}$.

		By the minimality of the choice of $|B|$, we obtain there is a $\gamma$-chain between $z_3$ and $z_8$, whence there is a $\gamma$-chain between $z_7$ and $z_8$.
				
		\qedhere[End of proof of Claims C and D.]
	\end{proof}

Thus the proof of Theorem \ref{5.1}.5 is complete.
\end{proof}

We continue with three examples.

\smallskip
\begin{example}\label{infdih}
If $G_0=\{\alpha, \alpha^{-1}, \tau\}$, then $\mathcal B(G_0)$ is a root closed $v$-noetherian G-monoid, but neither a C-monoid nor locally tame, and $\mathsf D(G_0)=\omega(G_0)=\infty$.
\end{example}
\begin{proof}
Since $G_0$ is finite,	
it follows by Theorem \ref{5.1} that  $\mathcal B(G_0)$ is a $v$-noetherian G-monoid, $\mathsf D(G_0)=\omega(G_0)=\infty$, and not locally tame, whence not a C-monoid by \cite[Theorem 3.3.4.3]{Ge-HK06a}.

 To show $\mathcal B(G_0)$ is root closed, it suffices to prove $\widetilde{\mathcal B(G_0)}\subset \mathcal B(G_0)$.  Let $\frac{S_1}{S_2}\in \widetilde{\mathcal B(G_0)}\subset \mathcal F(G_0)$ with $S_1=\alpha^{[n_1]}\bdot (\alpha^{-1})^{[m_1]}\bdot\tau^{[l_1]}, S_2=\alpha^{[n_2]}\bdot (\alpha^{-1})^{[m_2]}\bdot\tau^{[l_2]}\in\mathcal B(G_0)$. Then $n_1\ge n_2 $, $m_1\ge m_2$, $l_1\ge l_2$, and $l_1,l_2, |n_1-m_1|, |n_2-m_2|$ are even. Therefore $(n_1-n_2)-(m_1-m_2), l_1-l_2$ are even and
$\frac{S_1}{S_2}=\alpha^{[n_1-n_2]}\bdot (\alpha^{-1})^{[m_1-m_2]}\bdot\tau^{[l_1-l_2]}$. If $l_1-l_2\neq 0$, then $\frac{S_1}{S_2}\in \mathcal B(G_0)$. Suppose $l_1=l_2$. Since $\frac{S_1}{S_2}\in \widetilde{\mathcal B(G_0)}$, there exists $N\in \N$ such that $(\frac{S_1}{S_2})^{[N]}=\alpha^{[N(n_1-n_2)]}\bdot (\alpha^{-1})^{[N(m_1-m_2)]}\in \mathcal B(G_0)$, which implies that  $(\frac{S_1}{S_2})=\alpha^{[n_1-n_2]}\bdot (\alpha^{-1})^{[m_1-m_2]}\in \mathcal B(G_0)$ and we are done.
\end{proof}

\smallskip
\begin{example} \label{taualpha}
If $G_0=\{\alpha, \tau\}$, then  $\mathcal B(G_0)$ is a root closed $v$-noetherian G-monoid, a half-factorial non-finitely generated C-monoid whence locally tame, but not completely integrally closed, and $\mathsf D(G_0)= \omega (G_0) = \infty$.
\end{example}

\begin{proof}
It is clear that $\mathcal A(G_0)=\{\alpha^{[2n]}\bdot\tau^{[2]} \colon n\in\N_0\}$. Thus 	
	$\mathsf D(G_0)=\omega(G_0)=\infty$, $\mathcal B(G_0)$ is not finitely generated, and  every sequence $S\in\mathcal B(G_0)\setminus\{1_{\mathcal B(G_0)}\}$ can be written as $S=\alpha^{[2n]}\bdot \tau^{[2m]}$ for some $n\in\N_0, m\in\N$, whence $\mathsf L(S)=\{\frac{\mathsf v_{\tau}(S)}{2}\}$.
Therefore $\mathcal B(G_0)$ is half-factorial and  $$\mathcal C(\mathcal B(G_0),\mathcal F(G_0))=\{[1_{\mathcal F(G_0)}],[\alpha^{[2]}],[\alpha],[\tau ^{[2]}],[\tau],[\alpha\bdot \tau^{[2]}],[\alpha\bdot \tau]\}\,,$$ whence $\mathcal B(G_0)$ is a C-monoid and therefore locally tame by \cite[Theorem 3.3.4.3]{Ge-HK06a}. Example \ref{infdih} implies that  $\mathcal B(G_0)$ is root closed.
Note that $\alpha ^{[2]}=\frac{\alpha ^{[2]}\bdot \tau ^{[2]}}{\tau ^{[2]}}\in\mathsf q(\mathcal B(G_0))$ and $\tau ^{[2]}\bdot (\alpha ^{[2]})^{[n]}\in\mathcal B(G_0)$ for all $n\in \N$, but $\alpha ^{[2]}\notin\mathcal B(G_0)$. We obtain $\mathcal B(G_0)$ is not completely integrally closed.
\end{proof}

In Proposition \ref{finitary} we  proved that monoids of product-one sequences over finite subsets of groups are finitary. Next we show that there are finitary monoids of product-one sequences over infinite subsets of groups.

\smallskip
\begin{example} \label{infinitefinitary}
If $G_0 =\{\tau, \alpha^{m} \colon m\in\N\}$, then $\mathcal B(G_0)$ is finitary, but neither a G-monoid nor seminormal, and $\mathsf D(G_0)= \omega (G_0) = \infty$.
\end{example}

\begin{proof}

If $G_1 = \{\alpha, \tau\}$, then $\mathcal B (G_1) \subset \mathcal B (G_0)$ is a divisor closed submonoid, whence Example \ref{taualpha} implies that $\infty = \mathsf D (G_1) \le \mathsf D (G_0)$ and $\infty = \omega (G_1)  \le \omega (G_0)$.

In the notation of Proposition \ref{finitary}, let $n=1$ and $A_1=\tau^{[2]}$. Then $\{A_1\}$ satisfies the conditions in Proposition \ref{finitary} and hence $\mathcal B(G_0)$ is finitary. By Theorem \ref{3.11}, $\mathcal B(G_0)$ is not a G-monoid. To see that $\mathcal B(G_0)$ is not seminormal, we set $S_1=(\alpha^2)^{[3]}\bdot \alpha^6\bdot \tau^{[4]}$ and $S_2=(\alpha^2)^{[2]}\bdot\tau^{[2]}$. Then $S_1,S_2\in\mathcal B(G_0)$ and $S=\alpha^2\bdot\alpha^6\bdot \tau^{[2]}=\frac{S_1}{S_2}\in\mathsf q(\mathcal B(G_0))\setminus \mathcal B(G_0)$, but $S^{[2]},S^{[3]}\in\mathcal B(G_0)$.
\end{proof}

Finally, we give the proof of Theorem \ref{5.2}.

\smallskip
\begin{proof}[Proof of Theorem \ref{5.2}]
Let $G$ be an infinite dihedral group.

1. This follows from Theorem \ref{3.14}

2.  Theorem \ref{main2} implies that $\mathcal L (G)$ has the given form, whence $\Delta (G)$ and $\mathcal U_k (G)$ are as asserted.

3. The monoid $\mathcal B (G)$ is an \FF-monoid by Proposition \ref{4.1}. Since  $\Delta (G) = \N$, we obtain $\mathsf c (G) = \omega (G) = \infty$ by Equation \eqref{basic}. Example \ref{infdih} shows that $\mathcal B (G)$ has divisor-closed submonoids, that are not locally tame, whence $\mathcal B (G)$ is not locally tame.
\end{proof}

\section*{Acknowledgments}

The authors have discussed this work with A. Geroldinger in innumerable conversations. We want to thank him for his suggestions and comments. We are also grateful to the anonymous referee for his careful reading; he found a mistake in a proof and helped a lot to get the manuscript into a readable shape.

\bigskip

%\bibliography{D:/Users/geroldin/Dropbox/biblio/ger,D:/Users/geroldin/Dropbox/biblio/hk,D:/Users/geroldin/Dropbox/biblio/fact,D:/Users/geroldin/Dropbox/biblio/zerosum,D:/Users/geroldin/Dropbox/biblio/ideal}
%\bibliography{C:/Users/geroldin_admin/Dropbox/biblio/ger,C:/Users/geroldin_admin/Dropbox/biblio/hk,C:/Users/geroldin_admin/Dropbox/biblio/fact,C:/Users/geroldin_admin/Dropbox/biblio/zerosum,C:/Users/geroldin_admin/Dropbox/biblio/ideal}
%\bibliographystyle{amsplain}

\providecommand{\bysame}{\leavevmode\hbox to3em{\hrulefill}\thinspace}
\providecommand{\MR}{\relax\ifhmode\unskip\space\fi MR }
% \MRhref is called by the amsart/book/proc definition of \MR.
\providecommand{\MRhref}[2]{%
  \href{http://www.ams.org/mathscinet-getitem?mr=#1}{#2}
}
\providecommand{\href}[2]{#2}

\end{document}